\newtheorem*{rep@theorem}{\rep@title}
\newcommand{\newreptheorem}[2]{%
\newenvironment{rep#1}[1]{%
 \def\rep@title{#2 \ref{##1}}%
 \begin{rep@theorem}}%
 {\end{rep@theorem}}}
\newtheorem{theorem}{Theorem}[section]
\newtheorem{claim}[theorem]{Claim}
\newtheorem{lemma}[theorem]{Lemma}
\newtheorem{proposition}[theorem]{Proposition}
\newtheorem{corollary}[theorem]{Corollary}
\newtheorem*{theorem*}{Theorem}
\theoremstyle{definition}
\newtheorem{definition}[theorem]{Definition}
\newtheorem*{definition*}{Definition}
\newtheorem{example}[theorem]{Example}
\newtheorem{question}[theorem]{Question}
\newtheorem*{question*}{Question}
\theoremstyle{remark}
\newtheorem{remark}[theorem]{Remark}
\newenvironment{claimproof}[1]{\par\noindent\emph{Proof of claim.}\space#1}{\hfill $\dashv$\\}
\def\l{{\langle}}
\def\r{{\rangle}}
\def\mathunderaccent#1#2 {\let\theaccent#1\skewfactor#2
\mathpalette\putaccentunder}
\def\putaccentunder#1#2{\oalign{$#1#2$\crcr\hidewidth
\vbox to.2ex{\hbox{$#1\skew\skewfactor\theaccent{}$}\vss}\hidewidth}}
\def\smallbox#1{\leavevmode\thinspace\hbox{\vrule\vtop{\vbox
   {\hrule\kern1pt\hbox{\vphantom{\tt/}\thinspace{\tt#1}\thinspace}}
   \kern1pt\hrule}\vrule}\thinspace}
\DeclareMathOperator{\id}{id}
\DeclareMathOperator{\GCH}{GCH}
\DeclareMathOperator{\ZFC}{ZFC}
\DeclareMathOperator{\add}{Add}
\DeclareMathOperator{\Gal}{Gal}
\DeclareMathOperator{\rGal}{Gal^{*}}
\DeclareMathOperator{\Fine}{Fine}
\newcommand{\cf}{{\rm cf}}
\def\Ult{{\rm Ult}}
\newcommand{\res}{\upharpoonright}
\title{Supercompact Measures and the Galvin Property}
\author{Tom Benhamou}
\thanks{The research of the first author was supported by the National Science Foundation under Grant
No. DMS-2246703}
\address[Benhamou]{Department of Mathematics, Rutgers University, New Brunswick, NJ, USA}
\email{tom.benhamou@rutgers.edu}
\author{Ben-Zion Weltsch}
\address[Weltsch]{Department of Mathematics, Rutgers University, New Brunswick, NJ, USA}
\email{ben.w@rutgers.edu}
\subjclass[2010]{03E45, 03E65, 03E55, 06A07}
\keywords{Galvin's property, the Ultrapower Axiom, Inner models, the Tukey order, p-point ultrafilters}
\begin{document}

\begin{abstract}
    We study saturation properties of $\sigma$-complete measures on $P_\kappa(\lambda)$, where $\lambda$ can be either regular or singular. 
    In particular, we prove that in contrast to Galvin's theorem, the Galvin property from \cite{tomshimonalejandro} fails for normal fine ultrafilters on $P_\kappa(\lambda)$,  answering a question of the first author and Goldberg from \cite{Benhamou_Goldberg_2024}.
    We then provide several applications of our results: to ultrafilters on successors under $UA$, we generalize a result of Gitik regarding density of ground model sets in supercompact Prikry extensions, and to generating sets of $P_\kappa(\lambda)$ measures. In the second part of the paper, we study variations of the Galvin property suitable for ultrafilters over $P_\kappa(\lambda)$, and generalize a result of Foreman-Magidor-Zeman \cite[Thm. 1.2]{foremanmagidorzeman} on determinacy of filter games to the two-cardinal setting, answering a question of the first author and Gitman from \cite{tomvika}.
\end{abstract}

\maketitle

\section*{Introduction}
The motivation for this paper arises from a theorem of F. Galvin (see Theorem~\ref{thm: galvin theorem}), first published in \cite{baumgartner-hajnal-mate}, which states that whenever $\kappa^{<\kappa} = \kappa$, $F$ is a normal filter over $\kappa$, and $\l A_i : i < \kappa^+ \r $ is a sequence of sets in $F$, there is some $I$ of size $\kappa$ such that $\bigcap_{i \in I} A_i \in F$.
For example, Galvin's theorem applies if $F$ is the club filter on $\kappa$ and $\GCH$ holds, or $F$ is a normal measure over a measurable cardinal $\kappa$.
Parameterizing the combinatorial property from Galvin's theorem yields the following definition:
\begin{definition*}\label{def: galvin property}[The Galvin Property]
    Let $X$ be a set and $F$ a filter on $X$ and $\kappa\leq \lambda$ cardinals.
    We say $\Gal(F,\kappa,\lambda)$ holds iff whenever $\l A_i : i < \lambda \r$ is a sequence of sets in $F$ of length $\lambda$, there is some $I \in [\lambda]^\kappa$ such that $\bigcap_{i \in I} A_i \in F$.
\end{definition*}
This is a regularity-like property in the sense of Keisler \cite{ChangKeisler} that has been considered in other contexts as well. Kanamori \cite{cohesiveKanamori} called it \textit{cohesiveness}, while Tukey and Isbell \cite{Tukey,Isbell} and more recently in work of Todorcevic, Dobrinen, Milovich, and the first author \cite{TodorcevicDirSets85,Dobrinen/Todorcevic11,Milovich08,BenhamouDobrinen} used it in connection to the Tukey order.  
 % and the first author on the Tukey order as the failure of the Galvin property characterizes Tukey-top filters.
%{\color{blue} I can't find anything about the history of ``cohesive" ultrafilters? Add stuff about Galvin on $\omega$: Tukey, Isbell, Todorcevic-Dobrinen, Milovich. Tom + Natasha, Tukey order on uncountable cardinals.}

Interest in the Galvin property  has had a recent resurgence due to its relevance to Prikry-type forcing theory \cite{gitikdensity,Parttwo,BENHAMOU_GITIK_2024}.
Gitik \cite{gitikdensity} used the Galvin property to prove the following:
\begin{theorem*}[Gitik, \cite{gitikdensity}]
    Suppose $2^\kappa = \kappa^+$ and $U$ is a normal ultrafilter on $\kappa$.
    Then in a generic extension by Prikry forcing with $U$, every  subset of $\kappa^+$ such that contains a ground model subset of the size $\kappa$.
\end{theorem*}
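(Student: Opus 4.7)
The plan is to apply Galvin's theorem to the measure-one sets associated with a choice of forcing conditions. Since $\kappa$ is measurable it is strongly inaccessible, so $\kappa^{<\kappa}=\kappa$, and Galvin's theorem applied to the normal ultrafilter $U$ yields $\Gal(U,\kappa,\kappa^+)$. This is the only nontrivial combinatorial input I would need.

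Fix a $\Pri(U)$-name $\dot X$ and a condition $p$ forcing $\dot X\subseteq\check\kappa^+$ and $|\dot X|=\check\kappa^+$; I aim to produce $q\le p$ and $Y\in V$ of size $\kappa$ with $q\Vdash \check Y\subseteq\dot X$. The first step is to observe that the set
\[
S=\{\alpha<\kappa^+ : (\exists r\le p)\; r\Vdash\check\alpha\in\dot X\}
\]
must have cardinality $\kappa^+$: otherwise $p\Vdash \dot X\subseteq\check S$, and since Prikry forcing preserves cardinals this would force $|\dot X|\le\check\kappa$, contradicting the choice of $p$. I can thus pick $\kappa^+$ many distinct ordinals $\alpha_\xi\in S$ together with conditions $q_\xi=(s_\xi,A_\xi)\le p$ such that $q_\xi\Vdash\check\alpha_\xi\in\dot X$.

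Next, I would pigeonhole on stems. Since $|[\kappa]^{<\omega}|=\kappa<\kappa^+$, there exist $J\in[\kappa^+]^{\kappa^+}$ and a common stem $s^*$ with $s_\xi=s^*$ for every $\xi\in J$. Applying $\Gal(U,\kappa,\kappa^+)$ to the sequence $\langle A_\xi:\xi\in J\rangle$ yields $I\in[J]^{\kappa}$ with $A^*:=\bigcap_{\xi\in I}A_\xi\in U$. Now $q:=(s^*,A^*)$ is a legitimate Prikry condition extending $p$, and crucially it extends each $q_\xi$ for $\xi\in I$; hence $q\Vdash\check\alpha_\xi\in\dot X$ simultaneously for all $\xi\in I$. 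Setting $Y=\{\alpha_\xi:\xi\in I\}\in V$ produces a ground model subset of $\dot X$ of size $\kappa$ below $q$, and since $p$ and $\dot X$ were arbitrary the result follows by density.

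The main obstacle is really just the stem-and-measure-one-set bookkeeping: matching stems under pigeonhole, then intersecting sufficiently many measure-one sets so as to retain a Prikry condition simultaneously extending all the $q_\xi$. Once Galvin's theorem is in place, the forcing side is nearly automatic. The assumption $2^\kappa=\kappa^+$ itself plays no essential role in this skeleton — the combinatorial substance is simply that the normal measure $U$ satisfies the Galvin property, so any failure of the conclusion would entail a failure of $\Gal(U,\kappa,\kappa^+)$ in $V$.
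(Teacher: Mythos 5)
This statement is quoted from Gitik's paper \cite{gitikdensity} as background; the present paper gives no proof of it, so there is nothing internal to compare against. Reading the (garbled) statement as ``every subset of $\kappa^+$ of size $\kappa^+$ in the Prikry extension contains a ground model set of size $\kappa$,'' your argument is correct and is the standard one: decide $\kappa^+$ many ordinals into $\dot X$ below $p$, stabilize the stem by pigeonhole, apply $\Gal(U,\kappa,\kappa^+)$ (which holds outright since $\kappa$ is measurable, hence $2^{<\kappa}=\kappa$) to the measure-one parts, and observe that $(s^*,\bigcap_{\xi\in I}A_\xi)$ extends all the chosen conditions simultaneously; genericity then turns the density statement into the conclusion about every such set in $V[G]$. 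This is precisely the mechanism behind the equivalence with density of old sets cited from \cite{BENHAMOU_GITIK_2024}, and it is the positive counterpart of the paper's Theorem \ref{theorem: density old sets}, whose proof runs the same forcing bookkeeping in reverse from a failure of the Galvin property.

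One caveat: your closing remark that $2^\kappa=\kappa^+$ plays no essential role is accurate only for this weak form. Gitik's theorem in \cite{gitikdensity} actually produces a ground model subset of the same cardinality $\kappa^+$, and there the hypothesis $2^\kappa=\kappa^+$ is used in earnest: a single application of the Galvin property yields one condition forcing only $\kappa$ many ordinals into $\dot X$ (you cannot intersect $\kappa^+$ many measure-one sets), so obtaining an old set of size $\kappa^+$ requires a recursion of length $\kappa^+$ with bookkeeping over names and measure-one sets that $2^\kappa=\kappa^+$ makes possible. So your skeleton proves the statement as printed here, but not the full result of the cited paper, and the last sentence of your proposal should be tempered accordingly.
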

In \cite{BENHAMOU_GITIK_2024} it was then proven that the Galvin property of a general $\kappa$-complete ultrafilter over $\kappa$ is equivalent to the density of ground model sets of the tree-Prikry forcing analogous to the one described in Gitik's theorem.
The first author and Gitik further extend this analysis in \cite{BENHAMOU_GITIK_2024} to study ultrafilters $U$ such that Prikry forcing with $U$ adds a generic for the Cohen forcing at $\kappa$.

%In general, given a filter $F$, one can ask for which $\kappa$ and $\lambda$ does $\Gal(F,\kappa,\lambda)$ hold.
In \cite{Benhamou_Goldberg_2024} the first author and Goldberg use a $\diamondsuit$-like principle, which also inspired the one introduced in \cite{tomfanxin}, to show that under the Ultrapower Axiom (UA) if $U$ is a $\sigma$-complete ultrafilter on $\kappa^+$ then $\Gal(U,\kappa^+,\kappa^{++})$ must fail.
In the same paper, they ask the following question:
\begin{question*}(\cite[Question 7.4]{Benhamou_Goldberg_2024})
    Suppose $U$ is a normal, fine ultrafilter on $P_\kappa(\kappa^+)$.
    Must $\Gal(U,\kappa,(2^\kappa)^+)$ hold?
\end{question*}
Here we answer this question negatively.
In fact, not only does the Galvin property need not hold, it never holds in this case:
\begin{reptheorem}{theorem: normal nongalvin}
        Let $U$ be a normal, fine ultrafilter on $P_\kappa(\lambda)$.
    Suppose that $2^{<\lambda}=\lambda$ and $\cf(\lambda) \neq \kappa$.
    Then $\lnot \Gal(U,\kappa,2^\lambda)$.
\end{reptheorem}

The situation when $\cf(\lambda) = \kappa$ is more subtle.
In this case, the proof of Galvin's original theorem generalizes using tools of Goldberg's book \cite{Goldberg+2022}.
\begin{reptheorem}{theorem: cf(kappa) galvin}
    Let $U$ be a normal, fine ultrafilter on $P_\kappa(\lambda)$.
    Suppose that $2^{<\lambda}=\lambda$ and $\cf(\lambda) = \kappa$.
    Then $\Gal(U,\kappa,2^\lambda)$. 
\end{reptheorem}
We then investigate the extent to which this theorem can be improved, and show the following:
\begin{reptheorem}{theorem: cf kappa nongalvin}
    Let $U$ be a normal, fine ultrafilter on $P_\kappa(\lambda)$.
    Suppose that $2^{<\lambda}=\lambda$ and $\cf(\lambda) = \kappa$.
    $\lnot \Gal(U,(2^\kappa)^+,2^\lambda)$.
\end{reptheorem}
This leaves (at least) one case open, namely $\Gal(U,2^\kappa,2^\lambda)$ (see Question \ref{question: cf kappa galvin}).

Our results have several applications to the theory of ultrafilters.
First, the failure of Galvin's property for $\cf(\lambda) \neq \kappa$ shows that normal, fine ultrafilters on $P_\kappa(\lambda)$ are $\kappa$-Tukey-top, namely Tukey-maximal among $\kappa$-complete ultrafilters $U$ over $P_\kappa(\lambda)$, whereas Galvin's theorem says that normal ultrafilters on cardinals are necessarily non-$\kappa$-Tukey-top.

Secondly, we can improve the Theorem from \cite{Benhamou_Goldberg_2024} regarding ultrafilter on successor cardinals under UA:
\begin{repcorollary}{Cor: UA}
    (UA) Let $U$ be a $\sigma$-complete ultrafilter on $\kappa^+$.
    If $2^\kappa=\kappa^+$ then $\lnot \Gal(U,\kappa,2^{\kappa^+})$.
\end{repcorollary}

Then, using our theorem, we generalize Gitik's result on the density of old sets in generic extensions by Prikry forcing to the case of the supercompact Prikry forcing:
\begin{reptheorem}{theorem: density old sets}
    Suppose $U$ is a normal $P_\kappa(\lambda)$ ultrafilter where $\cf(\lambda) \neq \kappa$ and $2^{<\lambda} = \lambda$.
    Let $G$ be generic for the supercompact Prikry forcing with respect to $U$.
    In $V[G]$, there is a set $S \subseteq 2^\lambda$ that contains no ground model set of size $\kappa$.
\end{reptheorem}
Finally, we use these results to study generating sets of $P_\kappa(\lambda)$-measure and conclude the following:
\begin{repcorollary}{Cor: large ultrafilter number}
    If $U$ is a $\sigma$-complete $(\kappa,\lambda)$-regular ultrafilter over $\lambda$, $\kappa$ being a strong limit and $2^{<\lambda}=\lambda$, then $\chi(U)=2^\lambda$.
\end{repcorollary}

In the last section of this paper, we consider the revised Galvin property defined using inclusion modulo the \emph{fine filter} which is the filter  generated by cones, i.e. sets of the form $\check{\alpha} = \{x \in P_\kappa(\lambda) : \alpha \in x\}$. 
In Theorem \ref{thm: wierd Galvin} prove a variation of Galvin's theorem using inclusion modulo this filter.
We then use the inclusion modulo the fine filter to study variation of the filter games introduced in \cite{welchschlict} by Nielsen and Welch, which were used to characterize small large cardinals.
Foreman, Magidor, and Zeman in \cite{foremanmagidorzeman} call these games \emph{Welch games} and use their determinacy to construct interesting ideals.
In particular, they prove:
\begin{theorem*}[\cite{foremanmagidorzeman}, Theorem 1.4]\label{theorem: fmz}
    Assume $2^\kappa = \kappa^+$ and $\kappa$ does not carry a saturated ideal. Let $\gamma$ be an infinite regular cardinal below $\kappa^+$.
    If player II has a winning strategy in the Welch game of length $\gamma$, then there is a uniform normal ideal $I$ on $\kappa$ with a dense set $D \subseteq I^+$ such that:
    \begin{enumerate}
        \item $(D, \subseteq^*)$ is a downwards-growing tree of height $\gamma$
        \item $D$ is closed under $\subseteq_I$-decreasing sequences of length $\gamma$
        \item $D$ is dense in $P(\kappa)/I$.
    \end{enumerate}
\end{theorem*}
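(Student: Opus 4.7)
The plan is to extract the ideal $I$ and its dense tree $D$ directly from the winning strategy $\sigma$ of player II. In the Welch game of length $\gamma$, player I plays subsets $A_\alpha \subseteq \kappa$ and player II responds by committing to extensions $F_\alpha$ of her developing $\kappa$-complete filter which decide each $A_\alpha$; II wins a play if all her filters remain proper and $\kappa$-complete throughout. A winning $\sigma$ thus canonically associates to each legal partial play a coherent filter extension, and this is the single source of both $I$ and $D$.

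First I would organize the plays of $\sigma$ into a tree $T$ whose nodes at level $\alpha<\gamma$ are legal partial plays of length $\alpha$ in which II follows $\sigma$. To each node $t \in T$ I would associate a canonical set $B_t \subseteq \kappa$ serving as a generator of the filter $F_t$ committed to by $\sigma$ at stage $|t|$. Extending $t$ to $t'\supsetneq t$ in $T$ corresponds to adding strictly more generators, so $B_{t'} \subseteq^* B_t$, yielding the downwards-growing tree structure required by (1). Set $D = \{B_t : t \in T\}$.

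Next, I would define $I$ as the dual of the normal $\kappa$-complete filter generated by the $B_t$ along branches, or equivalently: $X \in I$ iff player I cannot legally play $X$ as an eventual move at any node of $T$ without $\sigma$ deciding $\kappa \setminus X$. Density of $D$ in $P(\kappa)/I$ then follows: given $X \in I^+$ there is some node where I can legally play $X$, and $\sigma$'s response commits to a positive set $B_t \subseteq_I X$. Normality of $I$ uses that $\sigma$ can respond to diagonal challenges by player I, while the hypothesis $2^\kappa = \kappa^+$ provides the bookkeeping needed to enumerate all $\kappa$-many potentially relevant moves and diagonalize over them inside a $\kappa^+$-sized collection of plays.

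The main obstacle is verifying (2), closure under $\subseteq_I$-decreasing sequences of length $\gamma$. Given such a chain $\langle B_{t_\alpha} : \alpha < \gamma\rangle$, I would align the underlying nodes to form a branch $b$ through $T$ of length $\gamma$, and invoke the fact that $\sigma$ wins on $b$ to extract a ``limit'' node whose associated positive set lies, modulo $I$, below every $B_{t_\alpha}$. This is precisely the point at which the length $\gamma$ of the game is essential, and it is where the winning condition on $\sigma$ must be exploited in its full strength rather than level-by-level. The non-saturation hypothesis enters in the closing step to ensure that $I$ itself is not $\kappa^+$-saturated, which would collapse the tree $D$ to a small quotient and contradict that $\sigma$ genuinely needs length $\gamma$ to win; without this, $D$ might fail to genuinely have height $\gamma$. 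I expect the combinatorial heart of the proof to lie in this reconstruction of a branch from a decreasing $\subseteq_I$-chain, and in carefully pinning down the notion of ``canonical generator'' $B_t$ so that $D$ is a tree under $\subseteq^*$ on the nose rather than only modulo $I$.
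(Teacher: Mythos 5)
Your proposal captures the right general flavor (read an ideal off the winning strategy via a ``hopeless ideal''-style definition, and take the dense set to consist of the sets the strategy plays), but it misses the two ideas that actually carry the proof, and it misassigns the role of the non-saturation hypothesis. First, you take $D$ to be the sets $B_t$ attached to \emph{all} legal partial plays following $\sigma$. That collection is not a tree under $\subseteq^*$: two plays that diverge can perfectly well produce $\subseteq^*$-comparable (or even equal) sets without either play extending the other, so condition (1) fails for your $D$. In the actual argument one builds a pruned tree $T(\sigma)$ by recursion on levels, and the non-saturation hypothesis is used \emph{constructively at every successor step}: since the conditional hopeless ideal $I(\sigma,P_b)$ attached to a branch $b$ is not saturated, one can fix an antichain of $\kappa^+$ many $I(\sigma,P_b)$-positive sets and let the successors of $b$ be responses of $\sigma$ refining distinct antichain elements. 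This is precisely what makes same-level nodes pairwise incompatible (so $D$ is a tree on the nose) and what supplies $\kappa^+$ many successors with unboundedly large indices, which is needed later. Your proposed use of non-saturation (``at the end, to ensure $I$ is not saturated so the tree has height $\gamma$'') is not how the hypothesis enters and would not repair the tree structure.

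Second, density does not come from the original strategy. After building $T(\sigma)$ one defines a \emph{new} strategy $\sigma'$ that always answers with a node of $T(\sigma)$ (choosing the immediate successor of the current branch whose index exceeds the Challenger's move), and the ideal in the theorem is the hopeless ideal $I(\sigma')$ of this auxiliary strategy, not of $\sigma$. Density of $D=T(\sigma)$ in $P(\kappa)/I(\sigma')$ is then essentially by definition: an $I(\sigma')$-positive set is almost-covered by some response of $\sigma'$, and every response of $\sigma'$ is a node of the tree. If you keep the ideal attached to $\sigma$ while pruning the dense set (or keep all plays while hoping for a tree), one of density or the tree property breaks. Closure under decreasing sequences, which you flag as the main obstacle, is in fact the easy part once the construction is set up: a $\subseteq_I$-decreasing chain through $D$ lies along a branch of $T(\sigma)$ by the same-level incompatibility, and since $\sigma$ (hence $\sigma'$) wins games of length $\gamma$, the branch can be continued, giving a lower bound in $D$. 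The hypothesis $2^\kappa=\kappa^+$ enters through the internally approachable chain of models of size $\kappa$ used to present the game, ensuring every subset of $\kappa$ and every constructed object appears in some model, which is what legitimizes the bookkeeping you allude to.
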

From the ideal $I$, a precipitous ideal can be constructed, given that $\gamma>\omega$, showing the equiconsistency of a winning strategy for Player II in the game of length $\omega+1$ and the existence of a measurable cardinal.

 This result was generalized to some extent by the second author and Gitman in \cite{tomvika} to filter games in the two-cardinal settings. Also, it was noticed in \cite{tomvika} that an additional property of the constructed ideal, named \textit{$\mu$-measuring}\footnote{We say that an ideal $I$ is $\mu$-measuring if given any collection $\mathcal{A}=\{A_i\mid i<\mu\}\subseteq P(\kappa)$, and a positive set $B\in I^+$, there is $B'\subseteq B$, $B'\in I^+$ such that for every $i<\mu$, $B'\subseteq_I A_i$ or $B'\subseteq_I \kappa\setminus A_i$.} could be extracted, with which Theorem~\ref{theorem: fmz} could be reversed. However, the generalization was not fully satisfactory as the dense tree did not consist of positive sets from the constructed ideal. In an attempt to find a direct generalization to Theorem~\ref{theorem: fmz}, it was then asked:
\begin{question*}[{\cite[Question 9.8]{tomvika}}] Is there an equivalent version of the two-cardinal games where
sets are played instead of ultrafilters?
\end{question*}
In the last part of the paper, we answer this question and generalize Theorem~\ref{theorem: fmz} to the setting of ideals on $P_\kappa(\lambda)$, using two-cardinals filter games where sets are played in analogy to the Welch games. This enables us to construct dense trees consisting of sets.
\begin{reptheorem}{theorem: game ideal}
    Suppose that $2^\lambda=\lambda^+$, there is no saturated ideal on $P_\kappa(\lambda)$, and that the Judge has a winning strategy for the game $G^\gamma_2$ for some regular $\omega<\gamma\leq\lambda$.
    Then there is an ideal $I$ on $P_\kappa(\lambda)$ such that:
    \begin{enumerate}
        \item $I$ is normal.
        \item $I$ is precipitous.
        \item $I^+$ has a dense subtree (ordered by $\supseteq_{\mathcal{F}}$) $T$ which is $\gamma$-closed.
        \item $I$ is $\lambda$-measuring. 
        That is, for any $\l A_\alpha\mid \alpha<\lambda\r\subseteq P_\kappa(\lambda)$ and any $S\in I^+$ there is $S'\subseteq S$, $S'\in I^+$ such that for any $\alpha<\lambda$, $S'\subseteq_{\mathcal{F}} A_\alpha$ or $S'\subseteq_{\mathcal{F}} P_\kappa(\lambda)\setminus A_\alpha$. 
    \end{enumerate}
\end{reptheorem}
The paper is organized as follows:
\begin{enumerate}
    \item Section \S~1: We begin with some preliminary definitions related to the Galvin property.
    We introduce other combinatorial properties of ultrafilters and show some basic relations between these properties, the Galvin property, and the Rudin-Keisler order.
    \item Section \S~2: Here we prove the main result of the paper and other results on the Galvin property on fine two-cardinal ultrafilters.
    We also prove some results about the Galvin property on ultrafilters on successor cardinals under the Ultrapower Axiom.
    \item Section \S~3: We show some applications of our results on Galvin's property from the previous section:
    We consider the Galvin property under the Ultrapower Axiom, density of ground model sets after the supercompact Prikry forcing, and generating sets of $P_\kappa(\lambda)$ measures.
    \item Section \S~4: We analyze filter combinatorics modulo the filter $Fine_{\kappa,\lambda}$:
    We consider a revised Galvin property with respect to this ideal, define a version of $P$-point filters, define a modification of the diagonal intersection, and we construct a two-cardinal analog of the ideal constructed in section $5$ of \cite{foremanmagidorzeman}.
\end{enumerate}
\section{Preliminaries}
\subsection{Ultrafilters and Ultrapowers}

Here we collect some definitions and basic facts about ultrafilters. Let $U$ be an ultrafilter over a set $X$.
We let $M_U$ denote the class of equivalence classes of functions $f$ with domain $X$ and let $j_U$ denote the usual ultrapower construction of the set-theoretic universe $V$. Namely, $j_U\colon V \to M_U$ is defined by $j_U(x)=[c_x]_U$, where $c_x$ is the constant function with value $x$.
We denote by $\id$ the identity function and we use $\id_U$ as shorthand for the class of the identity function $\id_U$. 
When $M_U$ is well-founded, we identify $M_U$ with its Mostowski collapse, and do the same with $\id_U$.
\begin{definition}
    Let $U,W$ be ultrafilters over $X,Y$ respectively.
    We say $U$ is \emph{Rudin-Keisler below $W$}, written $U \leq_{RK} W$, if there is $f:Y\to X$ such that 
    $U=f_*(W)$, where $$f_*(W)=\{A\subseteq X\mid f^{-1}X \in W\}.$$
    We say $U$ is \emph{Rudin-Keisler equivalent to $W$}, written $U \cong_{RK} W$ if $U \leq_{RK} W$ and $W \leq_{RK} U$.
\end{definition}
It is well known (see for example \cite{NegropontisComfort}) that $U\cong_{RK}W$ if and only if there is $f:Y\to X$ which is one-to-one and $f_*(W)=U$. 
The following facts about the Rudin-Keisler order are standard, see for example \cite{Goldberg+2022}.
\begin{proposition}
    Let $U$ and $W$ be ultrafilters.
    Then $U \leq_{RK} W$ if and only if there is an elementary $k \colon M_U \to M_W$ such that $k \circ j_U = j_W$.
    If $k$ is also a surjection, then also $U \cong_{RK} W$.
\end{proposition}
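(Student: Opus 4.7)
The plan is to prove the two directions of the equivalence separately and then derive the surjective case essentially for free.

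For the forward direction, assume $U \leq_{RK} W$ witnessed by $f \colon Y \to X$ with $f_*(W) = U$. I would define $k \colon M_U \to M_W$ by $k([g]_U) = [g \circ f]_W$ for $g$ with domain $X$. Well-definedness and elementarity both reduce, via \L{}o\'s's theorem, to the observation that for any formula $\varphi$ and functions $g_1, \dots, g_n$ on $X$, the set $\{x \in X : \varphi(g_1(x), \dots, g_n(x))\}$ lies in $U = f_*(W)$ if and only if its $f$-preimage $\{y \in Y : \varphi(g_1(f(y)), \dots, g_n(f(y)))\}$ lies in $W$. The commutation $k \circ j_U = j_W$ is immediate from $c_x \circ f = c_x$.

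For the reverse direction, assume we have elementary $k$ with $k \circ j_U = j_W$. The key idea is to read off a function from the image of the identity: choose $f \colon Y \to V$ representing $k(\id_U) \in M_W$, i.e.\ $[f]_W = k([\id_X]_U)$. For any $A \subseteq X$, using the fundamental property $A \in U \iff [\id_X]_U \in j_U(A)$, apply $k$ and use elementarity plus $k \circ j_U = j_W$ to obtain
\[
A \in U \iff [f]_W \in j_W(A) \iff f^{-1}(A) \in W.
\]
Taking $A = X$ shows $f^{-1}(X) \in W$, so after modifying $f$ on a $W$-null set we may assume $f \colon Y \to X$, and the displayed equivalence says exactly $f_*(W) = U$.

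For the final clause, observe that any elementary $k$ is automatically injective (it preserves $\neq$), so if $k$ is also surjective then $k$ is a bijection and $k^{-1} \colon M_W \to M_U$ is elementary. Since $k \circ j_U = j_W$ gives $k^{-1} \circ j_W = j_U$, the reverse direction just proved yields $W \leq_{RK} U$, hence $U \cong_{RK} W$. I do not anticipate a serious obstacle; the only mild subtlety is the move from an arbitrary function representing $k(\id_U)$ to one genuinely landing in $X$, which is handled by the $A = X$ case above.
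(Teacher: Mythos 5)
Your proof is correct and is the standard argument; the paper itself gives no proof of this proposition, simply citing it as a well-known fact (referring to Goldberg's book), and your two directions—pushing functions forward along $f$ via \L{}o\'s for one implication, and deriving $f$ from a representative of $k(\id_U)$ together with $A \in U \iff \id_U \in j_U(A)$ for the other—are exactly how it is proved there. The surjective case via $k^{-1}$ being elementary is also handled correctly.
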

    Let $\kappa$ and $\lambda$ be cardinals.
    We set 
    \[P_\kappa(\lambda) = \{ x \subseteq \lambda : |x| < \kappa\}.\]
    The ultrafilters in this paper all have $P_\kappa(\lambda)$ as an underlying set. 
\begin{definition}
    Let $U$ be an ultrafilter on $P_\kappa(\lambda)$.
    We say $U$ is \emph{fine} if for all $\alpha <\lambda$ we have $\{x \in P_\kappa(\lambda) : \alpha \in x\} \in U$.
    We say $U$ is \emph{normal} if whenever $X_\alpha \in U$ for $\alpha < \lambda$ then the \emph{diagonal intersection} $\triangle_{\alpha < \lambda} X_\alpha \in U$ where
    \[ \triangle_{\alpha < \lambda} X_\alpha \coloneq \{x \in P_\kappa(\lambda) : \forall \alpha \in x,\,(x \in X_\alpha) \} \]
\end{definition}
\begin{proposition}\cite[p. 185]{Goldberg+2022}
    Let $U$ be an ultrafilter over $P_\kappa(\lambda)$. 
    \begin{enumerate}
        \item $U$ is fine if and only if $M_U\models j_U[\lambda]\subseteq \id_U$.
        \item $U$ is normal if and only if $M_U\models\id_U\subseteq j_U[\lambda]$. \footnote{Although $j_U[\lambda]$ might not be in $M_U$, we abuse notation by writing $M_U\models j_U[\lambda]\subseteq X$ to mean that for every $\alpha<\lambda$, $M_U\models j_U(\alpha)\in X$.}
    \end{enumerate}
\end{proposition}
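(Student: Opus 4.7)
The plan is to apply Łoś's theorem twice, once for each clause, reducing both equivalences to elementary combinatorial identities involving the identity function $\id$ on $P_\kappa(\lambda)$.

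For part (1) the argument is immediate: since $j_U(\alpha)=[c_\alpha]_U$ where $c_\alpha$ is the constant function with value $\alpha$, Łoś gives $M_U\models j_U(\alpha)\in\id_U$ if and only if $\{x\in P_\kappa(\lambda):c_\alpha(x)\in \id(x)\}=\{x:\alpha\in x\}\in U$. Quantifying over $\alpha<\lambda$ is exactly the statement of fineness.

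For part (2), I would exploit the identity $\triangle_{\alpha<\lambda}\{x:f(x)\neq\alpha\}=\{x:f(x)\notin x\}$, valid for any $f\colon P_\kappa(\lambda)\to\lambda$. For the forward direction, assume $U$ is normal and let $[f]_U\in\id_U$ in $M_U$; by Łoś one has $\{x:f(x)\in x\}\in U$, so after modifying $f$ on a null set we may assume $f(x)\in x$ everywhere. If no $\alpha<\lambda$ satisfied $f=_U c_\alpha$, each $\{x:f(x)\neq\alpha\}$ would lie in $U$, so by normality so would their diagonal intersection; by the identity this intersection is $\{x:f(x)\notin x\}$, a contradiction. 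Hence some $\alpha$ works and $[f]_U=j_U(\alpha)\in j_U[\lambda]$. For the converse, assume $\id_U\subseteq j_U[\lambda]$ and let $\langle X_\alpha:\alpha<\lambda\rangle\subseteq U$; if $\triangle_\alpha X_\alpha\notin U$, its complement $B$ lies in $U$, and I would define $f(x)$ on $B$ to be the least $\alpha\in x$ with $x\notin X_\alpha$ (and extend arbitrarily off $B$). Then $\{x:f(x)\in x\}\supseteq B\in U$, so $[f]_U\in\id_U$ in $M_U$, forcing $[f]_U=j_U(\alpha_0)$ for some $\alpha_0<\lambda$; but then $\{x\in B:f(x)=\alpha_0\}\in U$ is a subset of $P_\kappa(\lambda)\setminus X_{\alpha_0}$, contradicting $X_{\alpha_0}\in U$.

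There is no serious obstacle here. The one point worth being careful about is that elements of $\id_U$ in $M_U$ correspond precisely to equivalence classes of functions $f$ with $\{x:f(x)\in x\}\in U$, not to arbitrary functions. In the forward direction of (2) this requires adjusting $f$ on a null set before treating it as genuinely regressive, while in the converse one must verify that the witness $f$ built from the supposed counterexample to the diagonal intersection actually represents an element of $\id_U$ so that the hypothesis applies.
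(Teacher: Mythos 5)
Your proof is correct: part (1) is the immediate \L o\'{s} computation, and part (2) is the standard argument via the identity $\triangle_{\alpha<\lambda}\{x: f(x)\neq\alpha\}=\{x: f(x)\notin x\}$, with the right care taken that elements of $\id_U$ are represented by functions $f$ with $\{x: f(x)\in x\}\in U$. The paper itself gives no proof, citing Goldberg's book, and your argument is essentially the standard one found there, so there is nothing to add.
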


Note that every fine ultrafilter $U$ on $P_\kappa(\lambda)$ is \textit{uniform}, namely, every set $X\in U$ has cardinality $\lambda$. 
Otherwise $|X|<\lambda$, and $|\bigcup X|\leq |X|\cdot\kappa<\lambda$.
Take any $\alpha\in \lambda\setminus \bigcup X$. 
If $U$ is fine, there is some $x_0\in\{y\in P_\kappa(\lambda): \alpha\in y\}\cap X\neq\emptyset$ but then $\alpha\in x_0\subseteq \bigcup X$, contradicting our choice of $\alpha$.

We say an ultrafilter $U$ is \emph{$\kappa$-complete} if whenever $X_\alpha \in U$ for all $\alpha<\kappa$ then $\bigcap_{\alpha<\kappa} X_\alpha \in U$. We say that $U$ is $\sigma$-complete if it is $\omega_1$-complete.

\begin{definition}
    A cardinal $\kappa$ is \textit{strongly compact} if for all $\lambda \geq \kappa$ there is a $\kappa$-complete fine ultrafilter on $P_\kappa(\lambda)$.
    $\kappa$ is \textit{supercompact} is for all $\lambda \geq \kappa$ there is a normal 
    , fine ultrafilter on $P_\kappa(\lambda)$.
\end{definition}
Note that any normal, fine ultrafilter on $P_\kappa(\lambda)$ is automatically $\kappa$-complete \cite{kanamoribook}.
\begin{definition}
    An ultrafilter $U$ is $(\kappa,\lambda)$-regular if there is a sequence of sets $\langle X_\alpha : \alpha < \lambda \rangle \subseteq U$ such that whenever $I \subseteq \lambda$ and $|I| = \kappa$, $\bigcap_{\alpha \in I} X_\alpha = \emptyset$.
\end{definition}
Regularity of ultrafilters can be viewed as a strengthening of the failure of Galvin's property, since any witness for $(\kappa,\lambda)$-regularity of an ultrafilter $U$ is also a witness for $\neg\Gal(U,\kappa,\lambda)$. Regularity admits a useful characterization in terms of the ultrapower:
\begin{proposition}
    Let $U$ be an ultrafilter.
    The following are equivalent:
    \begin{enumerate}
        \item $U$ is $(\kappa,\lambda)$-regular.
        \item There is a set $X \in M_U$ such that $M_U\models j_U[\lambda] \subseteq X$ and $M_U\models |X|<j_U(\kappa)$. 
    \end{enumerate}
\end{proposition}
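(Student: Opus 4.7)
The plan is to prove the two directions by a standard translation between regularity witnesses and small sets in the ultrapower that cover $j_U[\lambda]$, using Ło\'s's theorem.

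For the direction $(1) \Rightarrow (2)$, I would start with a regularity witness $\langle X_\alpha : \alpha < \lambda\rangle$ and define a function $f$ on the underlying set of $U$ by $f(x) = \{\alpha < \lambda : x \in X_\alpha\}$. The regularity assumption forces $|f(x)| < \kappa$ for every $x$: otherwise one could extract some $I \in [f(x)]^\kappa$, and then $x \in \bigcap_{\alpha \in I} X_\alpha$ would be nonempty. Setting $X = [f]_U$, Ło\'s's theorem gives $M_U \models |X| < j_U(\kappa)$. For each $\alpha<\lambda$, the set $\{x : \alpha \in f(x)\}$ equals $X_\alpha$ and hence lies in $U$, so $M_U \models j_U(\alpha) \in X$. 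Thus $M_U \models j_U[\lambda] \subseteq X$.

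For the converse $(2) \Rightarrow (1)$, choose a representative $f$ with $X = [f]_U$. By Ło\'s's theorem applied to the statement $|X|<j_U(\kappa)$, we may assume $|f(x)| < \kappa$ for every $x$ (on a $U$-large set, which we can shrink). Define $X_\alpha = \{x : \alpha \in f(x)\}$ for each $\alpha<\lambda$; the hypothesis $j_U(\alpha) \in X$ translates by Ło\'s exactly to $X_\alpha \in U$. Now if some $I \in [\lambda]^\kappa$ satisfied $\bigcap_{\alpha \in I} X_\alpha \neq \emptyset$, then any $x$ in that intersection would have $I \subseteq f(x)$, contradicting $|f(x)|<\kappa$. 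So $\langle X_\alpha : \alpha<\lambda\rangle$ is the required regularity witness.

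There is no real obstacle here, only a bookkeeping point: in the $(2) \Rightarrow (1)$ direction one must be careful that the chosen representative $f$ actually returns sets of size $<\kappa$ pointwise rather than just on a $U$-large set, which is handled by modifying $f$ on the $U$-null complement without affecting $[f]_U$ or the $U$-measure of the sets $X_\alpha$. Similarly, in $(1) \Rightarrow (2)$ one should note that the statement $M_U \models j_U[\lambda] \subseteq X$ is shorthand, per the paper's earlier footnote, for $M_U \models j_U(\alpha) \in X$ for every $\alpha<\lambda$, so no assumption that $j_U[\lambda] \in M_U$ is being used.
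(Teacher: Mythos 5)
Your proof is correct, and it is the standard translation argument (witnessing sequence $\longleftrightarrow$ a $<\kappa$-sized covering set $[f]_U$ via $f(x)=\{\alpha<\lambda : x\in X_\alpha\}$) that the paper itself relies on: the paper states this proposition without proof as a known fact. Your bookkeeping remarks, that the representative can be modified on a $U$-null set so that $|f(x)|<\kappa$ holds pointwise, and that $M_U\models j_U[\lambda]\subseteq X$ is only the schematic assertion $M_U\models j_U(\alpha)\in X$ for each $\alpha<\lambda$, are exactly the right points to flag, and nothing further is needed.
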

Hence, regular and fine ultrafilters are connected in the following way:
\begin{corollary}\label{cor: regular to fine ultrafilter}
    The following are equivalent:
    \begin{enumerate}
        \item $U$ is $(\kappa,\lambda)$-regular.
        \item there is a fine ultrafilter on $P_\kappa(\lambda)$ such that $U\leq_{RK} W$ 
    \end{enumerate}
\end{corollary}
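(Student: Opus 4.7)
The plan is to leverage the preceding proposition, which characterizes $(\kappa,\lambda)$-regularity of $U$ by the existence of a set $X\in M_U$ with $j_U[\lambda]\subseteq X$ and $|X|^{M_U}<j_U(\kappa)$. Note that for the equivalence to be correct, condition (2) should read ``there is a fine ultrafilter $W$ on $P_\kappa(\lambda)$ with $W\leq_{RK} U$''; the direction $U\leq_{RK} W$ would be an ``upwards'' statement that does not match regularity (a normal fine $W$ on $P_\kappa(\lambda)$ projects to simple normal ultrafilters on $\kappa$, which need not be $(\kappa,\lambda)$-regular). Under the corrected reading, the proof is essentially an unpacking of the preceding proposition.

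For $(1)\Rightarrow(2)$, I would start from the witnessing set $X\in M_U$ given by the proposition and, by intersecting with $j_U(\lambda)$ if necessary, arrange $X\subseteq j_U(\lambda)$. Representing $X=[g]_U$, the constraints on $X$ translate via \L os's theorem into the statement that $g(x)\in P_\kappa(\lambda)$ on a $U$-large set; after redefining $g$ to be the constant function $\emptyset$ off that set, one has $g\colon\mathrm{dom}(U)\to P_\kappa(\lambda)$. I would then let $W=g_*(U)$, which is an ultrafilter on $P_\kappa(\lambda)$ with $W\leq_{RK} U$ by definition of the pushforward. Fineness of $W$ follows by unwinding: for each $\alpha<\lambda$,
\[\{y\in P_\kappa(\lambda):\alpha\in y\}\in W \iff \{x:\alpha\in g(x)\}\in U \iff j_U(\alpha)\in [g]_U=X,\]
and the last holds by $j_U[\lambda]\subseteq X$.

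For $(2)\Rightarrow(1)$, I would first observe that any fine ultrafilter $W$ on $P_\kappa(\lambda)$ is itself $(\kappa,\lambda)$-regular: the cones $X_\alpha=\{y:\alpha\in y\}$ lie in $W$ by fineness, and for $I\in[\lambda]^\kappa$, $\bigcap_{\alpha\in I}X_\alpha=\{y:I\subseteq y\}=\emptyset$ since $|y|<\kappa$. Then I would push this witness up the Rudin-Keisler order: if $W=f_*(U)$, the sequence $\langle f^{-1}(X_\alpha):\alpha<\lambda\rangle$ lies in $U$, and for $I\in[\lambda]^\kappa$ any point in $\bigcap_{\alpha\in I}f^{-1}(X_\alpha)$ would be sent by $f$ into $\bigcap_{\alpha\in I}X_\alpha=\emptyset$, a contradiction.

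The main obstacle is purely bookkeeping in (1)$\Rightarrow$(2), namely choosing a representative $g$ for $X$ that genuinely takes values in $P_\kappa(\lambda)$; once this is arranged, fineness of $W=g_*(U)$ is a direct \L os-translation of the inclusion $j_U[\lambda]\subseteq X$, so that the corollary really is just a reformulation of the preceding proposition in Rudin-Keisler language.
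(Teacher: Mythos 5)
Your proof is correct and is exactly the argument the paper leaves implicit: the corollary is stated without proof as an immediate consequence of the preceding ultrapower characterization of $(\kappa,\lambda)$-regularity, and your derivation of the fine ultrafilter $W=g_*(U)$ from a $P_\kappa(\lambda)$-valued representative of the covering set (via \L o\'s), together with pulling back the cones $\{y:\alpha\in y\}$ for the converse, is the intended standard route. You are also right that the stated direction $U\leq_{RK}W$ is a typo for $W\leq_{RK}U$; this is how the paper itself later invokes the result in the proof of Corollary \ref{cor: regular non-galvin}, and your normal-measure example correctly shows the literal reading would fail.
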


\section{The Galvin Property}
Given a filter $F$, recall that the Galvin propery (see Definition \ref{def: galvin property})  $\Gal(F,\kappa,\lambda)$ says that for all $\l A_i : i < \lambda \r\subseteq F$, there is $I \subseteq \lambda$ with $|I| = \kappa$ such that $\bigcap_{i \in I} A_i \in F$.
We will present the proof of Galvin's theorem, not only for completeness but also because our proof of Theorems \ref{theorem: cf(kappa) galvin} and \ref{thm: wierd Galvin} mirrors this one.
\begin{theorem}[Galvin's Theorem]\label{thm: galvin theorem}
 Suppose that $2^{<\kappa}=\kappa$ and let $F$ be a normal filter over $\kappa$. Then $\Gal(F,\kappa,\kappa^+)$. Namely, letting $\langle X_i\mid i<\kappa^+\rangle$ be a sequence of sets such that for every $i<\kappa^+$, $X_i\in F$, there is $Y\subseteq \kappa^+$ of cardinality $\kappa$, such that  $\bigcap_{i\in Y}X_i\in F$.
\end{theorem}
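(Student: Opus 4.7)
The plan is to adapt the classical argument, combining the closure of the normal filter $F$ under diagonal intersections with the combinatorial strength of the hypothesis $2^{<\kappa}=\kappa$ (which for regular $\kappa$ gives $\kappa^{<\kappa}=\kappa$, and hence the $\Delta$-System Lemma for families of $\kappa^+$ sets of size $<\kappa$).

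First I would make a cosmetic reduction: by intersecting with $\kappa\in F$, I may assume each $X_\alpha\subseteq\kappa$. The setup step then exploits normality. For each $\alpha\in[\kappa,\kappa^+)$, fix a bijection $e_\alpha\colon\kappa\to\alpha$ and define
\[
Y_\alpha \;=\; X_\alpha \;\cap\; \triangle_{\xi<\kappa} X_{e_\alpha(\xi)} \;\in\; F.
\]
The key property is the following ``coherence'': for every $\delta\in Y_\alpha$ and every $\beta\in e_\alpha[\delta]$ one has $\delta\in X_\beta$. So $Y_\alpha$ packages, with $\delta$-dependent coverage, membership in many of the earlier $X_\beta$'s into a single element of $F$.

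Next comes the combinatorial step. A pigeonhole on some ordinal-valued invariant of $\alpha$ (for instance, picking $\delta_\alpha\in Y_\alpha$ and finding $\delta^*<\kappa$ with $\kappa^+$ many $\alpha$'s satisfying $\delta_\alpha=\delta^*$) thins the family. Applying the $\Delta$-System Lemma to the resulting traces $e_\alpha[\delta^*]\in[\alpha]^{<\kappa}$ (which requires exactly $\kappa^{<\kappa}=\kappa$) produces a $\kappa^+$-sized subfamily $B\subseteq[\kappa,\kappa^+)$ in which the bijections $e_\alpha$ cohere. From $B$ one selects $I\in[\kappa^+]^\kappa$ so that the coherence guarantees $\bigcap_{\alpha\in I}X_\alpha$ contains a diagonal intersection of a $\kappa$-sequence of members of $F$, which therefore lies in $F$ by normality.

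The main obstacle is precisely this last alignment: ensuring $I$ is chosen so that the intersection really belongs to $F$, rather than merely containing a fixed witness $\delta^*$ (which would only give nonemptiness). Since $\kappa$-completeness of $F$ does not close the filter under arbitrary $\kappa$-intersections, $I$ must be aligned with the bijections $e_\alpha$ so that the intersection reduces, via the coherence produced by the $\Delta$-system, to a diagonal intersection on which normality can act. It is in this alignment that the hypothesis $2^{<\kappa}=\kappa$ is used essentially; this is also the step whose two-cardinal analogues in Theorems~\ref{theorem: cf(kappa) galvin} and~\ref{thm: wierd Galvin} require additional care.
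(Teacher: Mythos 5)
There is a genuine gap, and it sits exactly where you locate it yourself: the ``last alignment'' step is never carried out, and the machinery you set up does not lead to it. Write a prospective witness set as in the paper's argument: if $I=\{\alpha_i\mid i<\kappa\}$ and the candidate $F$-large set is (some refinement of) $\triangle_{i<\kappa}X_{\alpha_i}$, then for $\delta$ in that set the case $i<\delta$ is automatic from the definition of diagonal intersection --- and this is precisely all that your coherence property for $Y_\alpha=X_\alpha\cap\triangle_{\xi<\kappa}X_{e_\alpha(\xi)}$ encodes, since $\beta\in e_\alpha[\delta]$ is the ``$\beta$ enumerated before $\delta$'' case. The hard case is $\delta\le i$: one must guarantee $\delta\in X_{\alpha_i}$ for the $\kappa$-many indices that are ``late'' relative to $\delta$, and membership there cannot be extracted from diagonal intersections alone (that is exactly why $\kappa$-completeness does not suffice). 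Your combinatorial step does not supply this: stabilizing a single chosen point $\delta_\alpha=\delta^*$ only yields $\delta^*\in\bigcap_{\alpha}X_\alpha$ for the thinned family, i.e.\ nonemptiness, and a $\Delta$-system on the index traces $e_\alpha[\delta^*]\in[\alpha]^{<\kappa}$ controls how the \emph{index sets} overlap, not how the \emph{sets} $X_\alpha$ agree below a given ordinal; no choice of $I$ inside such a $\Delta$-system forces $\bigcap_{\alpha\in I}X_\alpha$ to contain an $F$-large set.

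What is missing is the stabilization of initial segments of the $X_\alpha$'s themselves, which is where $2^{<\kappa}=\kappa$ is really used: for $H_{\alpha,\xi}=\{i<\kappa^+\mid X_i\cap\xi=X_\alpha\cap\xi\}$ one finds (by pigeonhole, using that there are fewer than $\kappa$ possible traces $X_\alpha\cap\xi$) a single $\alpha^*$ with $|H_{\alpha^*,\xi}|=\kappa^+$ for all $\xi<\kappa$, then picks $\beta_i\in H_{\alpha^*,i+1}$ pairwise distinct. For $\zeta\in X_{\alpha^*}\cap\triangle_{i<\kappa}X_{\beta_i}$ the case $i\ge\zeta$ is handled by the agreement $X_{\beta_i}\cap(i+1)=X_{\alpha^*}\cap(i+1)$, which is exactly the information your construction never produces. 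So the proposal is not a proof as it stands; to repair it you should replace the $\Delta$-system step by agreement of the $X_\alpha$'s on initial segments (equivalently, stabilize the functions $\xi\mapsto X_\alpha\cap\xi$), after which the bijections $e_\alpha$ and the sets $Y_\alpha$ are no longer needed.
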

\begin{proof}
     For every $\alpha<\kappa^+$ and $\xi<\kappa$, let
$$H_{\alpha,\xi}=\{i<\kappa^+\mid X_i\cap\xi=X_\alpha\cap\xi\}.$$
\begin{claim}
There is $\alpha^*<\kappa^+$ such that for every $\xi<\kappa$, $|H_{\alpha^*,\xi}|=\kappa^+$ 
\end{claim}
\textit{Proof of claim.} Otherwise, for every $\alpha<\kappa^+$ there is $\xi_\alpha<\kappa$ such that $|H_{\alpha,\xi_\alpha}|\leq\kappa$.
By the pigeonhole principle,, there is $X\subseteq \kappa^+$
and $\xi^*<\kappa$, such that $|X|=\kappa^+$ and for each $\alpha\in X, \ \xi_\alpha=\xi^*.$
Since $\kappa$ is strong limit and $\xi^*<\kappa$, there are less than $\kappa$ many possibilities for $X_\alpha\cap \xi^*$. Hence we can shrink $X$ to $X'\subseteq X$ such that $|X'|=\kappa^+$ and find a single set $E^*\subseteq \xi^*$ such that for every $\alpha\in X'$, $X_\alpha\cap\xi^*=E^*$.
It follows that for every $\alpha\in X'$:
$$H_{\alpha,\xi_\alpha}=H_{\alpha,\xi^*}=\{i<\kappa^+\mid X_i\cap \xi^*=E^*\}.$$
Hence the set $H_{\alpha,\xi_\alpha}$ does not depend on $\alpha$, which means it is the same for every $\alpha\in X'$. Denote this set by $H^*$.
To see the contradiction, note that for every $\alpha\in X'$, $\alpha\in H_{\alpha,\xi_\alpha}=H^*$, thus $X'\subseteq H^*$, hence $$\kappa^+=|X'|\leq|H^*|\leq \kappa$$
contradiction.$\square_{\text{Claim}}$

Let $\alpha^*$ be as in the claim. Let us choose $Y\subseteq \kappa^+$ that witnesses the lemma. 
By recursion, define $\beta_i$ for $i<\kappa$. At each step we pick $\beta_i\in H_{\alpha^*,i+1}\setminus\{\beta_j\mid j<i\}$ (at the first step pick any index in $H_{\alpha^*,1}$). It is possible to find such $\beta_i$, since the cardinality of $H_{\alpha^*,\rho_i+1}$ is $\kappa^+$,  and $\{\beta_j\mid j<i\}$ is of size less than $\kappa$.
Let us prove that $Y=\{\beta_i\mid i<\kappa\}$ is as wanted. Indeed, by definition, it is clear that $|Y|=\kappa$. Next, we need to prove that $\bigcap_{\gamma\in Y}X_\gamma=\bigcap_{i<\kappa}X_{\beta_i}\in F$. By normality of $F$, $$X^*:= X_{\alpha^*}\cap\Delta_{i<\kappa}X_{\beta_i}\in F.$$ Thus it suffices to prove that $X^*\subseteq \bigcap_{i<\kappa}X_{\beta_i}$. Let $\zeta\in X^*$, then for every $i<\zeta$, $\zeta\in X_{\beta_i}$ by the definition of the diagonal intersection. For $i\geq\zeta$, $\zeta\in i+1$, and thus $\zeta<i+1$. Recall that $\beta_i\in H_{\alpha^*,i+1}$ which means that  $$X_{\alpha^*}\cap(i+1)=X_{\beta_i}\cap(i+1),$$ and since $\zeta\in X_{\alpha^*}\cap(i+1)$, $\zeta\in X_{\beta_i}$. We conclude that $\zeta\in\bigcap_{i<\kappa}X_{\beta_i}$, thus $X^*\subseteq\bigcap_{i<\kappa}X_{\beta_i}$. 
\end{proof}
The Galvin property is an invariant of the Rudin-Keisler order (or more generality the Tukey order, see \cite{BenhamouDobrinen}).
\begin{lemma}[{\cite[Lemma 2.1]{nonGalvinFilter}}]\label{lemma: galvin RK}
    Suppose $W \leq_{RK} U$ are ultrafilters and $\kappa\leq\lambda$ are cardinals.
    If $\lnot \Gal(W,\kappa,\lambda)$ then $\lnot \Gal(U,\kappa,\lambda)$.
\end{lemma}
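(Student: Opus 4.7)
The plan is to pull back a witness to the failure of the Galvin property through the Rudin–Keisler projection. Write $U$ on $Y$ and $W$ on $X$, and fix $f\colon Y\to X$ with $W = f_*(U)$, i.e.\ $A \in W$ iff $f^{-1}[A] \in U$, obtained from $W \leq_{RK} U$.

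Suppose $\lnot\Gal(W,\kappa,\lambda)$, witnessed by a sequence $\langle A_i : i < \lambda \rangle \subseteq W$ such that for every $I \in [\lambda]^\kappa$ we have $\bigcap_{i\in I} A_i \notin W$. Set $B_i := f^{-1}[A_i] \subseteq Y$. Each $B_i \in U$ because $A_i \in W = f_*(U)$, so $\langle B_i : i < \lambda \rangle \subseteq U$ is a candidate sequence for the failure of $\Gal(U,\kappa,\lambda)$.

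The key computation is that preimage commutes with intersection: for any $I \in [\lambda]^\kappa$,
\[ \bigcap_{i \in I} B_i \;=\; \bigcap_{i \in I} f^{-1}[A_i] \;=\; f^{-1}\!\left[\,\bigcap_{i \in I} A_i\right]. \]
Since $\bigcap_{i\in I} A_i \notin W$ and $W$ is an ultrafilter, its complement is in $W$, so (using $f^{-1}$ preserves complements) $f^{-1}[\bigcap_{i\in I} A_i] \notin U$. Thus $\bigcap_{i \in I} B_i \notin U$, which shows $\lnot\Gal(U,\kappa,\lambda)$.

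There is no real obstacle here: the only thing one has to be careful about is to use the ultrafilter property (not merely filter) to convert ``$\bigcap_{i\in I} A_i \notin W$'' into ``$f^{-1}[\bigcap_{i\in I} A_i] \notin U$'' via complements, since $f_*(U)$ is defined by membership rather than by containment. With that observation the argument is a direct pullback and the proof is essentially a single display.
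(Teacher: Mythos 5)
Your proof is correct and is the same pullback argument as the cited proof: push the witnessing family back along the Rudin--Keisler projection and use that preimages commute with intersections. One small simplification: the detour through complements is unnecessary, since $A\in W=f_*(U)$ if and only if $f^{-1}[A]\in U$ holds by the very definition of the pushforward, so $\bigcap_{i\in I}A_i\notin W$ immediately gives $f^{-1}\bigl[\bigcap_{i\in I}A_i\bigr]\notin U$.
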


The main question inspiring this paper concerns the Galvin property on ultrafilters on $P_\kappa(\kappa^+)$, but we shall answer the question more generally for ultrafilters on $P_\kappa(\lambda)$ for any $\lambda \geq \kappa^+$.
We shall split this section to three subsections, each addressing this general question depending on the cofinality of $\lambda$.

\subsection{The case $\cf(\lambda) > \kappa$}

We begin with a lemma that we shall use repeatedly.
\begin{lemma}\label{lemma: unbounded lemma}
    Let $\kappa < \lambda$ be cardinals and let $U$ be an ultrafilter on $P_\lambda(\lambda)$ such that $U$ does not concentrate on $P_\kappa(\zeta)$ for any $\zeta < \lambda$.
    Suppose that $[X \mapsto \mathcal{A}_X]_U = \mathcal{A} \in M_U$ and $\delta<\cf(\lambda)$. 
    If $M_U\models |\mathcal{A}| < j_U(\delta)$. Then for any $B \in U$ there is $\theta <\delta$ such that $
    \{ \sup(X) : X\in B \text{ and } |\mathcal{A}_X| < \theta \}$ is unbounded in $\lambda$.
\end{lemma}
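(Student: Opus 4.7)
The plan is to argue by contradiction: assume the conclusion fails and construct a single ordinal $\zeta^* < \lambda$ that forces $U$ to concentrate on $P_\kappa(\zeta^*)$, contradicting the standing hypothesis. The leverage comes from $\delta < \cf(\lambda)$, which lets a $\delta$-indexed family of bounds below $\lambda$ collapse to a single bound below $\lambda$.

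First, I would negate the conclusion. Suppose that for every ordinal $\theta < \delta$ the set $S_\theta := \{\sup(X) : X \in B \text{ and } |\mathcal{A}_X| < \theta\}$ is bounded in $\lambda$, say by some $\zeta_\theta < \lambda$. Setting $\zeta^* := \sup_{\theta < \delta} \zeta_\theta$, the hypothesis $\delta < \cf(\lambda)$ gives $\zeta^* < \lambda$. This is the uniform bound that will produce the contradiction.

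Next I would convert the ultrapower inequality into an honest $U$-large set. Since $j_U(\delta)$ is represented by the constant function with value $\delta$, the assumption $M_U \models |\mathcal{A}| < j_U(\delta)$ together with \L{}o\'s's theorem yields
\[ C := \{X : |\mathcal{A}_X| < \delta\} \in U. \]
For each $X \in B \cap C$, define $\theta_X := |\mathcal{A}_X| + 1$ as an ordinal. Then $\theta_X < \delta$: if $\delta = \mu^+$ is a successor cardinal, then $|\mathcal{A}_X| \le \mu$, so $\theta_X \le \mu + 1 < \mu^+ = \delta$, and if $\delta$ is a limit cardinal this is immediate. Since $|\mathcal{A}_X| < \theta_X$ we have $\sup(X) \in S_{\theta_X}$, whence $\sup(X) \le \zeta_{\theta_X} \le \zeta^*$, so $X \subseteq \zeta^*$. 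Because every element of the underlying set has size less than $\kappa$, this means $X \in P_\kappa(\zeta^*)$.

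Thus $B \cap C \subseteq P_\kappa(\zeta^*)$, and since $B \cap C \in U$ we would obtain $P_\kappa(\zeta^*) \in U$, contradicting the hypothesis that $U$ does not concentrate on any $P_\kappa(\zeta)$ with $\zeta < \lambda$. There is no substantive obstacle here — the entire argument is a single application of \L{}o\'s's theorem combined with the regularity of $\cf(\lambda)$ at $\delta$. The only minor bookkeeping point is to treat $\theta$ as an ordinal rather than a cardinal, so that $\theta_X = |\mathcal{A}_X| + 1$ really does sit strictly below $\delta$ in the successor-cardinal case.
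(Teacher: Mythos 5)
Your proposal is correct and follows essentially the same route as the paper: negate the conclusion, use \L{}o\'s to get $\{X : |\mathcal{A}_X|<\delta\}\in U$, use $\delta<\cf(\lambda)$ to collapse the bounds on the $S_\theta$'s to a single $\zeta^*<\lambda$, and contradict the non-concentration hypothesis (your $\zeta^*$ should formally be bumped to $\zeta^*+1$ since $\sup(X)\leq\zeta^*$ only gives $X\subseteq\zeta^*+1$, a triviality).
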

\begin{proof}
    For every $\theta < \delta$ let $S_\theta = \{ \sup(X) : X \in B \text{ and } |\mathcal{A}_X| < \theta\}$.
    We want to show that there is some $\theta$ such that $\sup S_\theta = \lambda$.
    Since $M_U \models |\mathcal{A}| < j_U(\delta)$, by \L o\'{s} we have $Y \coloneq \{X \in B : |\mathcal{A}_X| < \delta\} \in U$.
    Note that $$Y = \bigcup_{\theta \leq \delta} \{X \in B : |\mathcal{A}_X| < \theta \}.$$
    Suppose now towards a contradiction that $\sup S_\theta < \lambda$ for all $\theta < \cf(\lambda)$. It follows that $$\zeta \coloneq \sup\{\sup(X) : X \in Y\}  < \lambda\ \ \text{   (recall }\cf(\lambda) > \delta).$$
    But then $U$ concentrates on $P_\kappa(\zeta)$, contradicting our assumption that $U$ is uniform.
\end{proof}
\begin{remark}
     Note that in the previous lemma, if we only assume $M_U\models |\mathcal{A}| \leq j_U(\delta)$, then we still get that for any $B \in U$ $
    \{ \sup(X) : X\in B \text{ and } |\mathcal{A}_X| \leq \delta \}$ is unbounded in $\lambda$. The reason is that for $\{X\in B\mid |\mathcal{A}_X|\leq\delta\}\in U$ and for any set in $Z\in U$,  $\{\sup(X)\mid X\in Z\}$ is unbounded in $\lambda$.
    
\end{remark}
%The following lemma is in the spirit of the $\diamondsuit$-like principles  from \cite{Benhamou_Goldberg_2024}:
%\begin{lemma}
%    Suppose that for no function $f:\lambda\to\lambda$ $M_U\models j_U(f)(|\mathcal{A}|)\geq \sup(\id_U)$. Then for every $B\in U$ there is $\theta<\lambda$ such that $\{\sup(X)\mid X\in B, \ |\mathcal{A}_X|\leq\theta\}$ is unbounded in $\lambda$.
%\end{lemma}
%\begin{proof}
%    Otherwise, for every $\theta$ we have $\delta_\theta<\lambda$ bounding ${\sup(X)\mid X\in B, \ |\mathcal{A}_X|\leq\theta\}$. Set $f(\theta)=\delta_\theta$, then $f:\lambda\to\lambda$ and since $\mathcal{A}=j_U(x\mapsto \mathcal{A}_x)(\id_U)$,  elementarity implies $M_U\models j_U(f)(|\mathcal{A}|)\geq \sup(\id_U)$.
%\end{proof}
%{\color{orange} Note here that if $M_U\models |\mathcal{A}|\leq j_U(\kappa)$, then $Y=\{X\in B : |\mathcal{A}|\leq \kappa\}\in U$ and therefore $\sup\{\sup(X)\mid X\in Y\}=\lambda$, maybe we can leverage that somehow. }
Now we answer \cite[Question 6.4]{Benhamou_Goldberg_2024} of the first author and Goldberg.
\begin{theorem}\label{theorem: normal nongalvin}
    Let $U$ be a normal, fine ultrafilter on $P_\kappa(\lambda)$.
    Suppose that $2^{<\lambda}=\lambda$ and $\cf(\lambda) > \kappa$.
    Then $\lnot \Gal(U,\kappa,2^\lambda)$.
\end{theorem}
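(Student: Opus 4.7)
The plan is to exhibit an explicit sequence $\langle A_S : S\in\mathcal P(\lambda)\rangle$ of length $2^\lambda$ in $U$ whose every $\kappa$-size sub-intersection lies outside $U$. The key idea is to use $2^{<\lambda}=\lambda$ to code each $S\subseteq\lambda$ by a function $h_S\colon\lambda\to\lambda$, and then to appeal to the normality of $U$ to promote the corresponding diagonal intersection into $U$.

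First I would use $2^{<\lambda}=\lambda$ to fix, for each ordinal $\beta<\lambda$, an injection $\iota_\beta\colon\mathcal P(\beta)\to\lambda$ (possible since $|\mathcal P(\beta)|=2^{|\beta|}\leq 2^{<\lambda}=\lambda$). Given $S\subseteq\lambda$, define $h_S\colon\lambda\to\lambda$ by $h_S(\beta):=\iota_\beta(S\cap\beta)$ and set
\[A_S:=\triangle_{\alpha<\lambda}\{x\in P_\kappa(\lambda):h_S(\alpha)\in x\}=\{x\in P_\kappa(\lambda):\forall\alpha\in x,\,h_S(\alpha)\in x\}.\]
Each cone $\{x:h_S(\alpha)\in x\}$ lies in $U$ by fineness, so $A_S\in U$ by normality, and this gives the desired $2^\lambda$-sequence in $U$.

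Next I would verify that $\bigcap_{S\in\mathcal I}A_S\notin U$ for any $\mathcal I\in[\mathcal P(\lambda)]^\kappa$. For each pair $S\neq S'$ in $\mathcal I$ let $\delta(S,S')<\lambda$ be least with $S\cap\delta(S,S')\neq S'\cap\delta(S,S')$, and set $\alpha^*:=\sup\{\delta(S,S'):S\neq S'\in\mathcal I\}$. Since there are at most $|\mathcal I|^2=\kappa$ such pairs and $\cf(\lambda)>\kappa$, we have $\alpha^*<\lambda$. For any $\alpha\geq\alpha^*$ the restrictions $S\cap\alpha$ are pairwise distinct as $S$ ranges over $\mathcal I$, so injectivity of $\iota_\alpha$ makes $\{h_S(\alpha):S\in\mathcal I\}$ a $\kappa$-size subset of $\lambda$. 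If some $x\in\bigcap_{S\in\mathcal I}A_S$ contained any $\alpha\geq\alpha^*$, the closure condition defining each $A_S$ would force $x$ to contain all $\kappa$-many ordinals $h_S(\alpha)$, contradicting $|x|<\kappa$. Hence $\bigcap_{S\in\mathcal I}A_S\subseteq\{x:x\subseteq\alpha^*\}$, and this last set is disjoint from the cone $\{x:\alpha^*\in x\}\in U$, so it is $U$-null, in particular not in $U$.

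The main conceptual hurdle is finding the right parameterization of $2^\lambda$ many sets in $U$: the naive length-$\lambda$ witness $\langle\{x:\alpha\in x\}:\alpha<\lambda\rangle$ does not extend by itself, and a quick double count shows that $U$ cannot be $(\kappa,2^\lambda)$-regular on the $\lambda$-sized set $P_\kappa(\lambda)$. The trick is that $2^{<\lambda}=\lambda$ leaves just enough room to encode each $S\subseteq\lambda$ as a distinct closure rule $h_S$ taking values in $\lambda$, while $\cf(\lambda)>\kappa$ keeps the separating coordinates $\delta(S,S')$ of any $\kappa$-subfamily bounded below $\lambda$. Once these ingredients are in place the rest of the argument is a short computation and, in contrast to the $\cf(\lambda)=\kappa$ case treated next, no ultrapower analysis or Lemma~\ref{lemma: unbounded lemma} is needed.
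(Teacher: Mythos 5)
Your proof is correct, but it takes a genuinely different route from the paper. The paper argues inside the ultrapower: it uses Goldberg's theorem that $\mathcal{A}=\{j_U(Y)\cap\sup j_U[\lambda] : Y\subseteq\lambda\}$ belongs to $M_U$, represents $\mathcal{A}$ by a function $X\mapsto\mathcal{A}_X$, takes $B_Y=\{X : Y\cap\sup(X)\in\mathcal{A}_X\}$, and derives a contradiction from Lemma~\ref{lemma: unbounded lemma} by finding $X^*$ with $|\mathcal{A}_{X^*}|\leq\theta_0<\kappa$ but $\sup(X^*)$ past all the splitting points, so that $\theta_0^+$ injects into $\mathcal{A}_{X^*}$. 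You instead work entirely combinatorially: you code each $S\subseteq\lambda$ by the closure rule $h_S(\beta)=\iota_\beta(S\cap\beta)$ (using $2^{<\lambda}=\lambda$ for the injections $\iota_\beta$), get $A_S\in U$ as a diagonal intersection of cones via normality plus fineness, and then observe that for any $\kappa$-sized family the splitting ordinals are bounded by some $\alpha^*<\lambda$ (here $\cf(\lambda)>\kappa$ is used, exactly as in the paper), so that any $x$ in the common intersection containing an ordinal $\geq\alpha^*$ would have to contain $\kappa$ distinct values $h_S(\alpha)$, forcing $\bigcap_{S\in\mathcal{I}}A_S\subseteq\{x : x\subseteq\alpha^*\}$, a set disjoint from a cone. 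Your argument needs no ultrapower analysis, no Goldberg representation theorem, and no unboundedness lemma, and it actually yields a slightly stronger conclusion: the $\kappa$-fold intersections are contained in $P(\alpha^*)$, hence lie in the dual of the fine filter rather than merely outside $U$. The trade-off is that you use normality essentially (length-$\lambda$ diagonal intersections), so your method does not adapt to the merely fine, $\sigma$-complete case, which is precisely where the paper's ultrapower/covering machinery (Theorem~\ref{theorem: fine nongalvin}) pays off.
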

\begin{proof}
    Let $\mathcal{A} = \{j_U(Y) \cap (\sup j_U[\lambda]) :  Y \subseteq \lambda\}$.
    By Theorem $4.3.4$ in \cite{Goldberg+2022}, $\mathcal{A} \in M_U$.
    Let $f \colon P_\kappa(\lambda) \to P(P(\lambda))$ represent $\mathcal{A}$ in $M_U$, so that $j_U(f)(j_U[\lambda]) = \mathcal{A}$.
    For each $X \in P_\kappa(\lambda)$ let $\mathcal{A}_X = f(X)$.
    We may assume that for every $X$, $f(X) \subseteq P(\sup(X))$. For every $Y \subseteq \lambda$,  define
    \[ B_Y \coloneq \{X \in P_\kappa(\lambda) : Y \cap \sup(X) \in \mathcal{A}_X\},\]
    then $B_Y \in U$ as $j_U[\lambda]\in j_U(B_Y)$ by definition of $\mathcal{A}$.
    We claim that $\{B_Y : Y \subseteq \lambda\}$ witnesses $\lnot \Gal(U,\kappa,2^{\lambda})$.
    Otherwise, there is a sequence of distinct $\langle Y_i \subseteq \lambda: i < \kappa \rangle$ so that $B = \bigcap_{i<\kappa} B_{Y_i} \in U$.
    
    Note that since $\lambda<j_U(\kappa)$, and $j_U(\kappa)$ is measurable in $M_U$, $M_U\models |\mathcal{A}|=2^\lambda<j_U(\kappa)$ so by Lemma \ref{lemma: unbounded lemma}, we can fix $\theta_0<\kappa$ such that $S_{\theta_0}$ is unbounded in $\lambda$. 
    For $i\neq j < \theta_0^+$ let $\beta_{i,j}$ be least such that $Y_i \setminus \beta_{i,j} \neq Y_j \setminus \beta_{i,j}$.
    Since $\cf(\lambda)>\kappa>\theta_0$, $ \beta^*\coloneq\sup_{i\neq j<\theta_0^+}(\beta_{i,j})<\lambda$, and by our choice of $\theta_0$, there is an $X^* \in B$ with $|\mathcal{A}_{X^*}| \leq \theta_0$ and $\sup(X^*) \geq \beta^*$.
    Then the map $i \mapsto Y_i \cap \sup(X^*)$ is now a one-to-one map from $\theta_0^+$ into $\mathcal{A}_{X^*}$, a contradiction.
\end{proof}
\begin{corollary}
    If $2^\kappa = \kappa^+$ and $U$ is a normal, fine ultrafilter on $P_\kappa(\kappa^+)$ then $\lnot \Gal(U,\kappa,2^{\kappa^+})$.
\end{corollary}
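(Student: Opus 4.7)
The plan is to derive this directly from Theorem~\ref{theorem: normal nongalvin} by verifying its hypotheses with $\lambda = \kappa^+$.

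First I would check the cofinality condition: since $\kappa^+$ is a successor cardinal, $\cf(\kappa^+) = \kappa^+ > \kappa$, so the assumption $\cf(\lambda) > \kappa$ of the theorem is satisfied automatically. Next I would verify that $2^{<\kappa^+} = \kappa^+$. Because $U$ is a normal fine ultrafilter on $P_\kappa(\kappa^+)$, the cardinal $\kappa$ is (at least $\kappa^+$-)supercompact, hence in particular inaccessible, so $2^\mu < \kappa$ for every $\mu < \kappa$. Combined with the standing hypothesis $2^\kappa = \kappa^+$, this gives
\[
2^{<\kappa^+} = \sup\{2^\mu : \mu \leq \kappa\} = 2^\kappa = \kappa^+,
\]
matching the hypothesis $2^{<\lambda} = \lambda$ of the theorem.

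With both hypotheses verified, Theorem~\ref{theorem: normal nongalvin} applied to $\lambda = \kappa^+$ yields $\lnot\Gal(U,\kappa,2^{\kappa^+})$, which is exactly the desired conclusion. There is no real obstacle here; the corollary is essentially an instantiation, and the only tiny subtlety is recalling that the existence of a normal fine ultrafilter on $P_\kappa(\kappa^+)$ forces $\kappa$ to be inaccessible, which is what allows one to compute $2^{<\kappa^+}$ cleanly from the single cardinal-arithmetic assumption $2^\kappa = \kappa^+$.
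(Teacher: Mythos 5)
Your proposal is correct and is exactly the paper's intended argument: the corollary is the instance $\lambda=\kappa^+$ of Theorem~\ref{theorem: normal nongalvin}, using $\cf(\kappa^+)=\kappa^+>\kappa$ and $2^{<\kappa^+}=\kappa^+$. The only remark is that your appeal to inaccessibility of $\kappa$ is unnecessary, since by monotonicity $2^{<\kappa^+}=\sup\{2^\mu:\mu\leq\kappa\}=2^\kappa=\kappa^+$ already follows from the hypothesis $2^\kappa=\kappa^+$ alone.
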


Notice that the assumption $\cf(\lambda) > \kappa$ is crucial for the argument to work, so that the supremum of the $\beta_{i,j}$'s is below $\lambda$.
The case when $\cf(\lambda) < \kappa$ will admit a much easier argument.

Normality is crucial to show that the set $\mathcal{A}$ is in $M_U$ to produce the counterexample to the Galvin property.
Without normality, we cannot guarantee this, but we can effectively \textit{cover} $\mathcal{A}$ in the ultrapower if $U$ is fine.
This yields the following result:
\begin{theorem}\label{theorem: fine nongalvin}
    Let $U$ be a fine, $\sigma$-complete $P_\kappa(\lambda)$-ultrafilter where $\kappa$ is strong limit, $\cf(\lambda) > \kappa$, and $2^{<\lambda} = \lambda$.
    Then $\lnot \Gal(U,\kappa,2^{\lambda})$.
\end{theorem}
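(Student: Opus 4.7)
The plan is to mimic the proof of Theorem~\ref{theorem: normal nongalvin}, with the single modification that without normality we can no longer invoke Goldberg's theorem to place the set $\{j_U(Y)\cap \sup j_U[\lambda] : Y\subseteq \lambda\}$ directly in $M_U$. Instead, I will find a larger set $\mathcal{A}'\in M_U$ of $M_U$-cardinality less than $j_U(\kappa)$ that still ``covers'' all of the relevant restrictions $j_U(Y)\cap \sup(\id_U)$.

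First I would use fineness to note that $j_U[\lambda]\subseteq \id_U$ in $M_U$, so $\sup(\id_U)\geq \sup j_U[\lambda]$ and every $j_U(Y)\cap \sup(\id_U)$ lies in $P(\sup(\id_U))^{M_U}$. By \L{}o\'s, $\id_U\in P_{j_U(\kappa)}(j_U(\lambda))$ in $M_U$, so $M_U\models|\sup(\id_U)|\leq|\id_U|<j_U(\kappa)$. The new ingredient replacing the measurability of $j_U(\kappa)$ from the normal case is that $\kappa$ is strong limit, hence $j_U(\kappa)$ is strong limit in $M_U$ by elementarity, giving
$$ M_U\models \bigl|P(\sup(\id_U))\bigr|<j_U(\kappa). $$
Setting $\mathcal{A}':=P(\sup(\id_U))^{M_U}$ and representing it by $f\colon P_\kappa(\lambda)\to P(P(\lambda))$ with $j_U(f)(\id_U)=\mathcal{A}'$, I set $\mathcal{A}_X:=f(X)\subseteq P(\sup X)$. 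For each $Y\subseteq \lambda$ define
$$ B_Y:=\{X\in P_\kappa(\lambda):Y\cap\sup(X)\in \mathcal{A}_X\}, $$
and note $B_Y\in U$ since $j_U(Y)\cap\sup(\id_U)\in \mathcal{A}'$ witnesses $\id_U\in j_U(B_Y)$ via \L{}o\'s.

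Finally I would show $\{B_Y:Y\subseteq \lambda\}$ witnesses $\lnot\Gal(U,\kappa,2^{\lambda})$. Suppose toward a contradiction that there are distinct $\langle Y_i:i<\kappa\rangle$ with $B:=\bigcap_{i<\kappa}B_{Y_i}\in U$. Applying Lemma~\ref{lemma: unbounded lemma} with $\delta=\kappa$ (valid because $\kappa<\cf(\lambda)$), I obtain $\theta_0<\kappa$ such that $S_{\theta_0}=\{\sup X:X\in B,\ |\mathcal{A}_X|<\theta_0\}$ is unbounded in $\lambda$. The remainder of the argument mirrors the normal case exactly: for $i\neq j<\theta_0^+$ let $\beta_{i,j}$ be the least ordinal with $Y_i\cap \beta_{i,j}\neq Y_j\cap \beta_{i,j}$; then $\beta^*:=\sup_{i\neq j}\beta_{i,j}<\lambda$ since $\theta_0^+<\kappa<\cf(\lambda)$, and picking $X^*\in B$ with $\sup(X^*)\geq\beta^*$ and $|\mathcal{A}_{X^*}|<\theta_0$ yields a one-to-one map $i\mapsto Y_i\cap\sup(X^*)$ from $\theta_0^+$ into $\mathcal{A}_{X^*}$, contradicting $|\mathcal{A}_{X^*}|<\theta_0$. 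The principal obstacle is the first step---producing the cover $\mathcal{A}'\in M_U$ without normality---and this is precisely where the strong-limit hypothesis on $\kappa$ is essential, taking over the role that measurability of $j_U(\kappa)$ played automatically in the normal proof.
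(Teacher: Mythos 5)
There is a genuine gap, and it sits exactly at the step you identified as the crux. You claim $M_U\models|\sup(\id_U)|\leq|\id_U|<j_U(\kappa)$, but the cardinality of the \emph{ordinal} $\sup(\id_U)$ is not bounded by the cardinality of the \emph{set} $\id_U$: a set of fewer than $j_U(\kappa)$ ordinals can have supremum of arbitrarily large cardinality. Indeed, since $\lambda>\kappa$, $\sup(\id_U)\geq\sup j_U[\lambda]\geq j_U(\kappa+1)>j_U(\kappa)$, so in $M_U$ the ordinal $\sup(\id_U)$ has cardinality at least $j_U(\kappa)$ and hence $M_U\models|P(\sup(\id_U))|\geq 2^{j_U(\kappa)}>j_U(\kappa)$. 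Thus your $\mathcal{A}'=P(\sup(\id_U))^{M_U}$ does not satisfy the hypothesis of Lemma~\ref{lemma: unbounded lemma}, and the argument collapses even more concretely: a representing function for $\mathcal{A}'$ can be taken to be $X\mapsto P(\sup X)$, so $\mathcal{A}_X=P(\sup X)$ and $B_Y=P_\kappa(\lambda)$ for every $Y\subseteq\lambda$; such a family trivially fails to witness $\lnot\Gal(U,\kappa,2^{\lambda})$, and no $X^*$ with $|\mathcal{A}_{X^*}|<\theta_0$ exists. The strong-limit hypothesis on $\kappa$ cannot rescue a full power set of an ordinal above $j_U(\kappa)$.

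The paper's proof replaces the full power set by a genuinely small covering family, and this is the idea your proposal is missing. Using $2^{<\lambda}=\lambda$, fix surjections $f_\alpha\colon\lambda\to P(\alpha)$ for $\kappa\leq\alpha<\lambda$; fineness gives $j_U[\lambda]\subseteq\id_U$, hence $X_{j_U(\alpha)}:=j_U(f_\alpha)[\id_U]\supseteq j_U[P(\alpha)]$, and each $X_\alpha$ has $M_U$-cardinality at most $|\id_U|<j_U(\kappa)$. One then defines, with $\lambda^*=\sup j_U[\lambda]$, the family $\mathcal{A}$ of those $Y\subseteq\lambda^*$ whose traces $Y\cap\alpha$ lie in $X_\alpha$ for unboundedly many $\alpha\in\id_U\cap\lambda^*$; every $j_U(Y)\cap\lambda^*$ belongs to $\mathcal{A}$, and the strong-limit hypothesis is used precisely to bound $|\mathcal{A}|\leq|\id_U|^{|\id_U\cap\lambda^*|}<j_U(\kappa)$ in $M_U$. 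From that point on your concluding counting argument (via Lemma~\ref{lemma: unbounded lemma}, the ordinals $\beta_{i,j}$, and the injection of $\theta_0^+$ into $\mathcal{A}_{X^*}$) is exactly right, but it needs this small $\mathcal{A}$, not $P(\sup(\id_U))^{M_U}$.
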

\begin{proof}
    For each $\alpha$ with $\kappa \leq \alpha < \lambda$, let $f_\alpha \colon \lambda \to P(\alpha) $ be a surjection (here we use $2^{<\lambda} = \lambda$.
    Since $U$ is fine, $\id_U \supseteq j_U[\lambda]$.
    It follows that $j_U(f_\alpha)[\id_U ]\supseteq j_U[P(\alpha)]$.

    Now let $\langle f^*_\alpha : \alpha < j_U(\lambda) \rangle = j_U(\langle f_\alpha : \alpha < \lambda \rangle),$ and set $X_\alpha = f^*_\alpha{} [\id_U].$
    Notice $\l X_\alpha : \alpha < j_U(\lambda) \r$ is in $M_U$: $X_\alpha \subseteq P(\alpha)^{M_U} \in M_U$ and $\id_U \in M_U$, and the sequence is definable from $\id_U $ and $\langle f^*_\alpha : \alpha < j_U(\lambda) \rangle$.
    Again, since $\id_U \supseteq j_U[\lambda]$ it follows that $j_U[P(\alpha)] \subseteq X_{j_U(\alpha)}$.
 
    Let $\lambda^* = \sup j_U[\lambda]$.
    In $M_U$, define the following set.
    \[\mathcal{A} = \{ Y \subseteq \lambda^* : Y \cap \alpha \in X_\alpha \text{ for unboundedly many } \alpha \in \id_U\cap \lambda^*\}\]

    Work in $M_U$, we have that $|\id_U|<j_U(\kappa)$ and $j_U(\kappa)$ is strong limit. Therefore, $$|\mathcal{A}| \leq |\prod_{\alpha\in \id_U\cap \lambda^*} X_\alpha|\leq |\id_U|^{|\id_U\cap\lambda^*|}< j_U(\kappa).$$
    By definition of $\mathcal{A}$ and elementarity we have $j_U(Y) \cap \lambda^* \in \mathcal{A}$ for all $Y \subseteq \lambda$.
    To see this, we will prove that for all $\alpha \in j_U[\lambda]$ we have $j_U(Y) \cap \lambda^* \cap \alpha \in X_\alpha$.
    Note that if $\alpha = j_U(\beta)$ for some $\beta < \lambda$ then $$j_U(Y) \cap j_U(\beta) = j_U(Y \cap \beta) \in j_U[P(\beta)] \subseteq X_\beta.$$

    Now let $f \colon P_\kappa(\lambda) \to P(P(\lambda^*))$ represent $\mathcal{A}$ in the ultrapower so that $$j_U(f)(\id_U) = \mathcal{A}.$$
    For each $X \in P_\kappa(\lambda)$ let $\mathcal{A}_X = f(X)$.
    Let $g$ represent $\lambda^*$ in $M_U$, so $j_U(g)(\id) = \lambda^*$.
    By the previous paragraph, $j_U(Y) \cap \sup(\id_U) \in \mathcal{A} = j_U(f)(\id)$.
    Reflecting this we have
    \[\forall Y \subseteq \lambda, \, B_Y \coloneq \{X \in P_\kappa(\lambda) : Y \cap g(X) \in \mathcal{A}_X\} \in U\]
    This is true as $\id_U \in j_U(B_Y)$ by elementarity and the definition of $\mathcal{A}$.
    We claim that $\{B_Y : Y \subseteq \lambda\}$ witnesses $\lnot \Gal(U,\kappa,2^{\lambda})$.
    Otherwise there is $\langle Y_i \subseteq \lambda: i < \kappa \rangle$ so that $B = \bigcap_{i<\kappa} B_{Y_i} \in U$.

    By Lemma \ref{lemma: unbounded lemma}, fix $\theta_0<\kappa$ such that $\{\sup(X) :  X\in B \text{ and } |\mathcal{A}_X| \leq \theta_0\}$ is unbounded in $\lambda$. Now for $i\neq j < \theta_0^+$ let $\beta_{i,j}$ be least such that $$Y_i \setminus \beta_{i,j} \neq Y_j \setminus \beta_{i,j}.$$
    Now take $\alpha^* > \sup_{i\neq j<\theta_0^+}(\beta_{i,j})$ below $\lambda$ such that there is an $X^* \in B$ with $|\mathcal{A}_{X^*}| \leq \theta_0$ and $g(X^*) = \alpha^*$.
    Then the map $i \mapsto Y_i \cap g(X^*)$ is a 1-1 map from $\theta_0^+$ into $\mathcal{A}_{X^*}$, a contradiction.
\end{proof}
Together with Corollary \ref{cor: regular to fine ultrafilter} we get the following, improving a result of Taylor \cite[Theorem 2.4 (2)]{TAYLOR197933} in some cases.
\begin{corollary}\label{cor: regular non-galvin}
    Let $\kappa$ be a strong limit cardinal. If $U$ is a $(\kappa,\lambda)$-regular and $\sigma$-complete ultrafilter over $\lambda$ where $\cf(\lambda)>\kappa$ and $2^{<\lambda}=\lambda$ then $\neg \Gal(U,\kappa,2^\lambda)$.
\end{corollary}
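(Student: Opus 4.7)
The plan is to reduce the corollary to Theorem~\ref{theorem: fine nongalvin} via the Rudin--Keisler order. Given the $(\kappa,\lambda)$-regular $\sigma$-complete ultrafilter $U$ on $\lambda$, I would first use regularity to manufacture a fine $\sigma$-complete ultrafilter $W$ on $P_\kappa(\lambda)$ that is Rudin--Keisler-below $U$, then apply Theorem~\ref{theorem: fine nongalvin} to $W$, and finally transport the resulting failure of the Galvin property back to $U$ via Lemma~\ref{lemma: galvin RK}.

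Concretely, fix a sequence $\langle X_\alpha : \alpha < \lambda \rangle \subseteq U$ witnessing $(\kappa,\lambda)$-regularity, and define $g \colon \lambda \to P_\kappa(\lambda)$ by
\[ g(\beta) = \{\alpha < \lambda : \beta \in X_\alpha\}. \]
Regularity guarantees $|g(\beta)| < \kappa$, so $g$ is well-defined. Set $W := g_*(U)$. For each $\alpha < \lambda$, $g^{-1}(\{x \in P_\kappa(\lambda) : \alpha \in x\}) = X_\alpha \in U$, so $W$ is fine on $P_\kappa(\lambda)$; and as a pushforward of a $\sigma$-complete ultrafilter, $W$ is itself $\sigma$-complete. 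By construction, $W \leq_{RK} U$. (This is essentially the content of Corollary~\ref{cor: regular to fine ultrafilter}, which I could invoke as a black box; I spell out the construction because the direction of the Rudin--Keisler order will matter in the last step.)

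With $\kappa$ strong limit, $\cf(\lambda) > \kappa$, and $2^{<\lambda} = \lambda$ all inherited from the hypotheses of the corollary, Theorem~\ref{theorem: fine nongalvin} applies to $W$ and yields $\neg \Gal(W,\kappa,2^{\lambda})$. Lemma~\ref{lemma: galvin RK}, applied to $W \leq_{RK} U$, then transfers this failure up the Rudin--Keisler order to $U$, giving $\neg \Gal(U,\kappa,2^{\lambda})$. There is no real obstacle in this argument, since it is essentially a packaging of earlier results; the one point that requires care is ensuring that the Rudin--Keisler relation produced from regularity points in the direction along which Galvin-failure propagates in Lemma~\ref{lemma: galvin RK}, and the explicit construction of $W$ as $g_*(U)$ makes this immediate.
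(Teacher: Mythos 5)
Your proposal is correct and follows the same route as the paper: obtain a fine $\sigma$-complete $W \leq_{RK} U$ on $P_\kappa(\lambda)$ from regularity (Corollary \ref{cor: regular to fine ultrafilter}, which you simply unpack explicitly), apply Theorem \ref{theorem: fine nongalvin} to $W$, and transfer $\neg\Gal$ to $U$ via Lemma \ref{lemma: galvin RK}. The direction of the Rudin--Keisler relation is handled correctly, so there is nothing to fix.
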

\begin{proof}
    By Corollary \ref{cor: regular to fine ultrafilter}, there is fine ultrafilter $W$ on $P_\kappa(\lambda)$ such that $W\leq_{RK} U$. Also, since $U$ is $\sigma$-complete so is $W$. By Theorem \ref{theorem: fine nongalvin}, $\neg \Gal(W,\kappa,2^\lambda)$. By Lemma \ref{lemma: galvin RK}, also $\neg \Gal(U,\kappa,2^\lambda)$.
\end{proof}
This corollary is also interesting in the light of the conjecture from \cite{TomJustinLuke} that non-Tukey-top ultrafilters (i.e. those which satisfy the Galvin property) must be non-regular.
%Taylor proved that \cite[Thm. 2.4(1)]{TAYLOR197933}:
%If $U$ is a uniform ultrafilter on $\kappa^+$, then $U$ is $(0,\kappa+1,\kappa^+)$-regular. Namely, there is a sequence $\l A_\alpha\mid \alpha<\kappa^+\r$ such that for every $I\subseteq \kappa$, with $\text{otp}(I)\geq \kappa+1$, $\bigcap_{\alpha\in I}A_\alpha=\emptyset$. 
%\begin{corollary}
%    If $U$ is a uniform ultrafilter on $\kappa^+$ then there is $X\in M_U$ such that $j_U[\kappa^+]\subseteq X$ and $M_U\models |X|\leq j_U(\kappa)$.
%\end{corollary}
%Derive the ultrafilter $W$ from $j_U$ using the seed $X$. Then $W$ is a fine ultrafilter on $P_{\leq\kappa^+}(\kappa^+)$.
%Also using a result of Taylor {\color{blue} state and refer to Taylor result. I'm not sure which result???}, we show the failure of Galvin's property for ultrafilters on successor cardinals.
%First the result of Taylor:
%\begin{itemize}
%    \item By Taylor, if $U$ is on $\kappa^+$, then $U^2$ is fine.
%    \item  $U^2\equiv_T U$.
%    \item Hence if $U$ is $\sigma$-complete on $\kappa^+$, and cardinal arithmetic, then $\neg \Gal(U,\kappa,2^\lambda)$.
%\end{itemize}
%\begin{lemma}
%    {\color{blue}Taylor theorem}
%\end{lemma}

%\begin{theorem}
%    Assume $2^\kappa=\kappa^+$. 
 %   If $U$ is a $\sigma$-complete uniform ultrafilter on $\kappa^+$ then $\neg \Gal(U,\kappa^+,\kappa^{++})$.
%\end{theorem}
%\begin{proof}
%    By Taylor's result, $U$ is above an ultrafilter $W$ fine on $P_{bd}(\kappa^+)$ which concentrates on sets of order type $\leq\kappa$, so applying Theorem \ref{theorem: fine nongalvin} and Lemma \ref{lemma: galvin RK} the result follows.
%\end{proof}

\subsection{The case $\cf(\lambda) < \kappa$}

Here the situation trivializes due to a powerful result of Goldberg, which generalizes a classic result of Solovay.
We begin with some definitions.
\begin{definition}
    An ultrafilter $U$ on a cardinal $\lambda$ is \emph{weakly normal} if whenever $A \in U$ and $f \colon A \to \lambda$ is regressive, then there is a $B \subseteq A$ in $U$ such that $| f [A]| < \lambda$.
\end{definition}
Weakly normal ultrafilters admit a nice characterization in the ultrapower (see \cite[Proposition 4.4.23]{Goldberg+2022}).
\begin{proposition}
    Let $\lambda$ be a cardinal.
    A countably complete, uniform ultrafilter $U$ on $\lambda$ is weakly normal if and only if $\id_U$ is the unique ordinal $\alpha \geq \sup j_U[\lambda]$ such that $\alpha \neq j_U(f)(\beta)$ for any $f \colon \lambda \to \lambda$ and $\beta < \alpha$ (i.e. $\id_U$ is the unique generator of $j_U$ above $j_U[\lambda]$).
\end{proposition}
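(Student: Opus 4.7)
The plan is to verify both implications by comparing the representation of $\id_U$ with the representations of other ordinals in the interval $[\sup j_U[\lambda], j_U(\lambda))$. I first record the basic fact that $\sup j_U[\lambda] \leq \id_U < j_U(\lambda)$, both of which follow from uniformity of $U$ by a direct application of \L o\'{s}: for each $\alpha < \lambda$, the set $\lambda \setminus \alpha$ lies in $U$, forcing $j_U(\alpha) \leq \id_U$, while $\lambda \in U$ forces $\id_U < j_U(\lambda)$.

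For the forward direction, assume $U$ is weakly normal. I first show that $\id_U$ is a generator above $\sup j_U[\lambda]$ by contradiction: suppose $\id_U = j_U(f)(\beta)$ with $f : \lambda \to \lambda$ and $\beta < \id_U$. Choosing $g$ representing $\beta$, the inequality $\beta < \id_U$ forces $g$ to be regressive on a set $A \in U$, so weak normality produces $B \subseteq A$ in $U$ with $|g[B]| < \lambda$. But then $\{x \in B : x = f(g(x))\} \in U$ is contained in $f[g[B]]$, a set of cardinality strictly less than $\lambda$, contradicting uniformity. For uniqueness, let $\alpha \neq \id_U$ lie in $[\sup j_U[\lambda], j_U(\lambda))$ and write $\alpha = [h]_U$ with $h : \lambda \to \lambda$. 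If $\alpha > \id_U$, the representation $\alpha = j_U(h)(\id_U)$ with $\id_U < \alpha$ immediately shows $\alpha$ is not a generator. If $\sup j_U[\lambda] \leq \alpha < \id_U$, then $h$ is regressive on a set in $U$; weak normality applied to $h$, together with an analysis of the pushforward of $U$ through $h$, is used to produce a representation $\alpha = j_U(h')(\beta')$ with $\beta' < \alpha$.

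For the backward direction, assume $\id_U$ is the unique generator in $[\sup j_U[\lambda], j_U(\lambda))$. Given a regressive $f : A \to \lambda$ with $A \in U$, set $\alpha = [f]_U$, so $\alpha < \id_U$. If $\alpha < \sup j_U[\lambda]$, pick $\gamma < \lambda$ with $\alpha < j_U(\gamma)$, so that $B := \{x \in A : f(x) < \gamma\} \in U$ has $|f[B]| \leq \gamma < \lambda$. Otherwise $\alpha$ lies in $[\sup j_U[\lambda], \id_U)$, so by uniqueness $\alpha$ is not a generator, yielding $\alpha = j_U(h)(\beta)$ with $\beta < \alpha$. Unpacking this via \L o\'{s} produces, on a $U$-large set, an equation tying $f$ to $h \circ g$ where $g$ represents $\beta$; repeating the argument (or invoking well-foundedness of $\in$ on $M_U$) eventually confines the image of $f$ on a set in $U$ to cardinality $< \lambda$.

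The main obstacle is the middle step of the forward uniqueness argument: translating the weak-normality bound $|h[B]| < \lambda$ into a representation of $\alpha$ by an ordinal parameter strictly below $\alpha$ itself. This requires carefully analyzing the pushforward ultrafilter $h_\ast(U\upharpoonright B)$ on the small set $h[B]$ and exploiting the fact that ordinals in the range of $j_U$ restricted to a set of cardinality $< \lambda$ behave tamely enough to extract the needed small representation; coordinating this with the iteration step in the backward direction is the technical heart of the equivalence.
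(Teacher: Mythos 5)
Your overall architecture is sound, and in fact the paper offers no proof of this proposition at all (it is quoted from Goldberg's book, Proposition 4.4.23), so the only question is whether your argument closes; it essentially does, and the two places you leave vague are both handled by one routine trick. The step you call ``the technical heart'' needs no pushforward analysis. Suppose $\sup j_U[\lambda]\le\alpha=[h]_U<\id_U$, so $h$ is regressive on a set in $U$ and weak normality gives $B\in U$ with $\mu=|h[B]|<\lambda$. Fix a bijection $e\colon\mu\to h[B]$, extend it arbitrarily to a map $\lambda\to\lambda$, and write $h=e\circ g$ on $B$ where $g\colon B\to\mu$. Then $\alpha=j_U(e)([g]_U)$ and $[g]_U<j_U(\mu)\le\sup j_U[\lambda]\le\alpha$, so $\alpha$ is not a generator. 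That is the whole step: the only point is that a parameter ranging in a cardinal $\mu<\lambda$ is represented below $j_U(\mu)$, which sits below $\sup j_U[\lambda]$ and hence below $\alpha$.

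The same observation makes your backward-direction sketch precise. If $\alpha=[f]_U\in[\sup j_U[\lambda],\id_U)$, uniqueness gives $\alpha=j_U(h_1)(\beta_1)$ with $\beta_1<\alpha$; if $\beta_1\ge\sup j_U[\lambda]$ apply uniqueness again, producing $\alpha>\beta_1>\beta_2>\cdots$, which by well-foundedness must reach some $\beta_n<\sup j_U[\lambda]$ after finitely many steps. Then $\beta_n<j_U(\gamma)$ for some $\gamma<\lambda$, $\alpha=j_U(h_1\circ\cdots\circ h_n)(\beta_n)$, and \L o\'{s} gives a set in $U$ on which $f$ agrees with $(h_1\circ\cdots\circ h_n)\circ g_n$ for some $g_n$ with range in $\gamma$; hence $f$ has range of size at most $|\gamma|<\lambda$ there, as required. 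Your first forward step (that $\id_U$ is a generator), the case $\alpha>\id_U$ of uniqueness, and the case $\alpha<\sup j_U[\lambda]$ of the converse are all correct as written. One small reading point: as literally stated, every ordinal $\ge j_U(\lambda)$ vacuously satisfies the generator condition for $f\colon\lambda\to\lambda$, so your tacit restriction to $\alpha\in[\sup j_U[\lambda],j_U(\lambda))$ is the intended interpretation, matching the parenthetical about generators of $j_U$ (all of which lie at or below $\id_U$).
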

\begin{definition}
    Let $\theta$ be a cardinal.
    An ultrafilter $U$ on $\theta$ is \emph{isonormal} if $U$ is weakly normal and $M_U$ is closed under $\theta$-sequences.
\end{definition}
Goldberg gives an exact characterization of when an ultrafilter on a cardinal is equivalent to a supercompactness measure. 
Using this characterization, we can reduce the study of $P_\kappa(\lambda)$ ultrafilters to ultrafilters on cardinals when we are considering only Rudin-Keisler invariant properties, such as the Galvin property.
We shall use this deep fact again in the next section.
\begin{theorem}[Goldberg {\cite[Theorem 4.4.37]{Goldberg+2022}}]\label{Goldberg Theorem}
    Every normal, fine ultrafilter is Rudin-Keisler equivalent to a unique isonormal ultrafilter.
\end{theorem}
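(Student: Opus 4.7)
The plan is to exploit the fact that for a normal, fine ultrafilter $U$ on $P_\kappa(\lambda)$, one has $\id_U = j_U[\lambda]$ in $M_U$, and that the ordinal $\delta := \sup j_U[\lambda]$ serves as a single ``seed'' generating the entire ultrapower embedding. My aim is to construct an isonormal ultrafilter $W$ on the cardinal $\theta := |\delta|^V$ whose ultrapower is canonically isomorphic to $M_U$ via a map identifying $\id_W$ with $\delta$, and then to argue that $\theta$, and hence $W$, is forced upon us by isonormality.

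For existence, I would fix a bijection $b \colon \theta \to \delta$ in $V$, set $\bar\delta := j_U(b)^{-1}(\delta) < j_U(\theta)$, and take $W := \{X \subseteq \theta : \bar\delta \in j_U(X)\}$. Then $W$ is an ultrafilter on $\theta$ with $W \leq_{RK} U$, and the factor map $k \colon M_W \to M_U$ defined by $k(j_W(f)(\bar\delta)) = j_U(f)(\delta)$ satisfies $k \circ j_W = j_U$. The main work is to show $k$ is surjective, equivalently that every element of $M_U$ is of the form $j_U(g)(\delta)$ for some $g$ on $\theta$; this follows because any $f \colon P_\kappa(\lambda) \to V$ factors modulo $U$ through the seed $\id_U = j_U[\lambda] \subseteq \delta$, and composing with $b^{-1}$ converts it to a function on $\theta$ witnessing the generation. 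Once $W \cong_{RK} U$, we automatically obtain $M_W = M_U$, which is closed under $\lambda$-sequences and hence under $\theta$-sequences since $\theta \leq \lambda$. Weak normality of $W$ reduces, via $k$, to the statement that $\delta$ is the unique generator of $j_U$ above $\sup j_U[\lambda]$; this holds essentially by definition, since any $\gamma < \delta$ is bounded by an element of $j_U[\lambda]$, so $j_U(h)(\gamma)$ lies in the Skolem hull of $j_U[V] \cup (\delta\cap \mathrm{Ord})$ and hence below $\delta$.

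For uniqueness, given any other isonormal $W'$ on $\theta'$ with $W' \cong_{RK} U$, the induced isomorphism $M_{W'} \cong M_U$ identifies $\id_{W'}$ with some generator of $j_U$ above $\sup j_{W'}[\theta']$. The weak-normality clause pins this generator down uniquely, and the $\theta'$-closure of $M_{W'}$ together with the fact that $\sup j_{W'}[\theta']$ must equal $\sup j_U[\lambda]$ under the identification forces $\theta' = \theta$ and $\id_{W'}$ to correspond to $\delta$, whence $W' = W$. The main obstacle is the surjectivity of $k$, i.e.\ verifying that $\delta$ genuinely generates $j_U$---this is where the two-cardinal structure of $P_\kappa(\lambda)$-ultrafilters interacts most delicately with the closure of the ultrapower, and is precisely the content of the ``seed'' machinery developed in Chapter 4 of \cite{Goldberg+2022}.
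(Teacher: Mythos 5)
This statement is not proved in the paper at all: it is imported verbatim from Goldberg's book (\cite[Theorem 4.4.37]{Goldberg+2022}), so your argument has to stand as a self-contained reconstruction, and as written it does not, because its load-bearing step is asserted rather than proved and is in fact false outside the case of regular $\lambda$. You take the seed to be $\delta=\sup j_U[\lambda]$ and claim that $\delta$ generates $M_U$ over $j_U[V]$ and is a generator ``essentially by definition'' because $\id_U=j_U[\lambda]\subseteq\delta$. But $j_U[\lambda]$ is a cofinal \emph{subset} of $\delta$, not an ordinal below it, so nothing factors through the single ordinal $\delta$ for free. For regular $\lambda$, recovering $j_U[\lambda]$ from $\sup j_U[\lambda]$ inside $M_U$ is exactly Solovay's lemma (via $j_U$ of a partition of $S^\lambda_\omega$ into $\lambda$ stationary sets); this is a genuine theorem that your sketch outsources to ``the seed machinery of Chapter 4 of \cite{Goldberg+2022}'', i.e.\ to the very result being proved, so the surjectivity of $k$ is never established. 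Worse, when $\cf(\lambda)=\kappa$ the claim is simply false: as the paper itself computes inside the proof of Theorem \ref{theorem: cf(kappa) galvin}, $\sup j_U[\lambda]=j_U(g)(\kappa)$ for a continuous cofinal $g\colon\kappa\to\lambda$, so $\delta$ is \emph{not} a generator, and the ultrafilter derived from it is just the pushforward under $g$ of the normal measure on $\kappa$ derived from $j_U$, which cannot be Rudin-Keisler equivalent to a fine ultrafilter on $P_\kappa(\lambda)$ with $\lambda>\kappa$. When $\cf(\lambda)<\kappa$, $j_U$ is continuous at $\lambda$, so $\delta=j_U(\lambda)$ and your $\bar\delta=j_U(b^{-1}(\lambda))\in j_U[V]$, making the derived $W$ principal. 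In these cases the correct seed lies strictly above $\sup j_U[\lambda]$ (compare Remark \ref{remark: isonormal theta}), and identifying it is where the substance of Goldberg's theorem lies.

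There is also a quantitative slip that destroys isonormality even where the seed is right: you set $\theta=|\delta|^V$ and later argue closure of $M_W$ under $\theta$-sequences ``since $\theta\leq\lambda$''. But for $\lambda>\kappa$ the ordinal $\sup j_U[\lambda]$ exceeds $j_U(\kappa)$, which in turn exceeds $(2^{\lambda^{<\kappa}})^{M_U}$, an ordinal of $V$-cardinality at least $2^{\lambda^{<\kappa}}$ because $P(P_\kappa(\lambda))\subseteq M_U$; hence $\theta\geq 2^{\lambda^{<\kappa}}>\lambda$, and $M_U$ is certainly not closed under $\theta$-sequences (that would put $U\in M_U$). So even granting the RK-equivalence, the $W$ you build could not be isonormal on your $\theta$; the isonormal ultrafilter equivalent to $U$ lives on $\lambda$ itself when $\cf(\lambda)\geq\kappa$. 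The uniqueness half likewise presupposes the generator analysis you have not carried out, and the identity $\sup j_{W'}[\theta']=\sup j_U[\lambda]$ is asserted without proof. The overall skeleton---derive $W$ from a seed, factor $j_U$ through $j_W$, prove the factor map is onto, then read off weak normality, closure, and uniqueness---is the right shape and agrees with Goldberg's argument in outline, but the two decisive steps (Solovay's lemma for regular $\lambda$, and the correct seed and underlying cardinal when $\cf(\lambda)\leq\kappa$) are missing or wrong, so the proposal does not yet constitute a proof.
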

\begin{remark}\label{remark: isonormal theta}
    The proof of the above theorem also shows the following fact that we shall use in the next section: suppose $U$ a normal, fine ultrafilter on $P_\kappa(\lambda)$ and $\cf(\lambda) \geq \kappa$.
    Let $W$ is the unique isonormal ultrafilter isomorphic to $U$.
    Then there is some ordinal $\theta$ such that $\sup j_U[\lambda] \leq \theta < j_U(\lambda)$ and $X \in W$ iff $\theta \in j_U(X)$.
\end{remark}

This yields an even stronger failure of Galvin's property in this case.
\begin{theorem}
    Let $\cf(\lambda)<\kappa<\lambda$ and $2^{<\lambda} = \lambda$.
    If $U$ is a normal, fine ultrafilter on $P_\kappa(\lambda)$ then $\lnot \Gal(U,\kappa,2^{\lambda^+})$.
\end{theorem}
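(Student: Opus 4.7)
The plan is to reduce to Theorem \ref{theorem: fine nongalvin} applied at $\lambda^+$ and then propagate the failure of Galvin's property up the Rudin--Keisler order. First, I would extract the relevant cardinal arithmetic. Since $U$ witnesses that $\kappa$ is $\lambda$-strongly compact, Solovay's theorem on SCH above a strongly compact yields $\lambda^{\cf(\lambda)} = \lambda^+$ (using $\cf(\lambda) < \kappa$ and $2^{\cf(\lambda)} < \lambda$, both from $2^{<\lambda} = \lambda$), so $2^\lambda = \lambda^+$; a short cardinal-arithmetic computation then gives $|P_\kappa(\lambda)| = \lambda^{<\kappa} = \lambda^+$.

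The central step is to show that $M_U$ is closed under $\lambda^+$-sequences, which I would deduce from Goldberg's Theorem \ref{Goldberg Theorem}. That theorem supplies a unique isonormal $W \cong_{RK} U$, with $M_W = M_U$. Because $\cf(\lambda) < \kappa$ we have $\sup j_U[\lambda] = j_U(\lambda)$, so the interval $[\sup j_U[\lambda], j_U(\lambda))$ used in the $\cf(\lambda) \geq \kappa$ case (Remark \ref{remark: isonormal theta}) is empty and $W$ cannot be on $\lambda$; matching domain cardinalities via Rudin--Keisler equivalence with $|P_\kappa(\lambda)| = \lambda^+$, $W$ must be isonormal on $\lambda^+$, and isonormality then forces $M_U = M_W$ to be $\lambda^+$-closed. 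In particular $j_U[\lambda^+] \in M_U$ with $|j_U[\lambda^+]|^{M_U} = \lambda^+$, and since $j_U(\kappa)$ is inaccessible in $M_U$ above $\lambda$ we have $\lambda^+ < j_U(\kappa)$. Therefore $U$ is $(\kappa, \lambda^+)$-regular.

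Finally, Corollary \ref{cor: regular to fine ultrafilter} yields a fine ultrafilter $U'$ on $P_\kappa(\lambda^+)$ with $U' \leq_{RK} U$, automatically $\kappa$-complete (hence $\sigma$-complete) as a Rudin--Keisler projection of $U$. The hypotheses of Theorem \ref{theorem: fine nongalvin} at $\lambda^+$ are now in place: $\kappa$ is strong limit (being measurable), $\cf(\lambda^+) = \lambda^+ > \kappa$, and $2^{<\lambda^+} = 2^\lambda = \lambda^+$. That theorem then gives $\lnot \Gal(U', \kappa, 2^{\lambda^+})$, and Lemma \ref{lemma: galvin RK} propagates this to $\lnot \Gal(U, \kappa, 2^{\lambda^+})$. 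The main obstacle is the identification in the second step: verifying that Goldberg's isonormal equivalent of $U$ lives on $\lambda^+$ rather than on $\lambda$, so that one inherits $\lambda^+$-closure of $M_U$ and can conclude $(\kappa, \lambda^+)$-regularity. This requires both the cardinal arithmetic supplied by Solovay and a careful reading of Goldberg's construction when the interval $[\sup j_U[\lambda], j_U(\lambda))$ collapses, forcing the generator to be sought in $[\sup j_U[\lambda^+], j_U(\lambda^+))$ instead.
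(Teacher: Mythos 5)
Your proposal is correct in outline and reaches the theorem by a genuinely different final step than the paper. The paper's proof also begins with Goldberg's Theorem~\ref{Goldberg Theorem}, but it simply cites the fact that $M_U$ (equivalently $M_W$) is closed under $\lambda^{<\kappa}$-sequences, notes that $\lambda^{<\kappa}\geq\lambda^+$ because $\cf(\lambda)<\kappa$, uses the seed $j_W[\lambda^+]\in M_W$ to derive a \emph{normal} fine ultrafilter $Z$ on $P_\kappa(\lambda^+)$ with $Z\leq_{RK}W\cong_{RK}U$, and then applies Theorem~\ref{theorem: normal nongalvin} at $\lambda^+$ together with Lemma~\ref{lemma: galvin RK}. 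You instead pass from $\lambda^+$-closure to $(\kappa,\lambda^+)$-regularity of $U$, use Corollary~\ref{cor: regular to fine ultrafilter} to obtain a fine, $\sigma$-complete $U'\leq_{RK}U$ on $P_\kappa(\lambda^+)$, and invoke Theorem~\ref{theorem: fine nongalvin}; this is the pattern of Corollary~\ref{cor: regular non-galvin} and it works, at the cost of routing through the non-normal theorem, whereas the paper's seed $j_W[\lambda^+]$ gives normality (and hence the lighter Theorem~\ref{theorem: normal nongalvin}) for free. A genuine merit of your write-up is that you make explicit the arithmetic $2^\lambda=\lambda^+$, which both arguments need in order to have $2^{<\lambda^+}=\lambda^+$ when working at $\lambda^+$, and which the paper leaves implicit.

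Two of your justifications are shakier than they need to be, and as written they lean on each other. First, Solovay's SCH theorem needs compactness at $\lambda^+$ (or full strong compactness), while $U$ directly witnesses only $\lambda$-strong compactness; the missing ingredient is exactly the $\lambda^+$-closure of $M_U$. Second, your route to that closure --- ``$|P_\kappa(\lambda)|=\lambda^+$, so by Rudin--Keisler equivalence $W$ must be isonormal on $\lambda^+$'' --- uses the Solovay arithmetic to compute $|P_\kappa(\lambda)|$ and also tacitly assumes that $U$ has no measure-one set of size $\lambda$ (the paper's uniformity remark only gives size at least $\lambda$), so the two steps are mildly circular. The circle is easy to break: closure of $M_U$ under $\lambda^{<\kappa}$-sequences is a classical fact about normal fine ultrapowers requiring no cardinal arithmetic (from $j_U[\lambda]\in M_U$ one recovers the map $\alpha\mapsto j_U(\alpha)$ on $\lambda$, hence $j_U[P_\kappa(\lambda)]=\left(P_\kappa(j_U[\lambda])\right)^{M_U}\in M_U$, and sequences indexed by $P_\kappa(\lambda)$ can then be amalgamated), and $\lambda^{<\kappa}\geq\lambda^{\cf(\lambda)}>\lambda$ by K\"onig since $\cf(\lambda)<\kappa$. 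This gives $\lambda^+$-closure outright, so $\kappa$ is $\lambda^+$-supercompact via the same embedding; Solovay's lemma at the regular cardinal $\lambda^+$ then gives $(\lambda^+)^{<\kappa}=\lambda^+$, and with $2^{<\lambda}=\lambda$ one gets $2^\lambda=\lambda^{\cf(\lambda)}=\lambda^+$. With the steps reordered in this way, your argument goes through exactly as you describe.
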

\begin{proof}
    Applying Theorem~\ref{Goldberg Theorem}, $U$ is Rudin-Keisler equivalent to an isonormal ultrafilter $W$ on $|\theta| \geq \lambda$, where $\theta$ is as in the previous remark.
    Note that $M_U$ and $M_W$ are in fact closed under $\lambda^{<\kappa}$-sequences and in this case $\lambda^{<\kappa} \geq \lambda^+$.
    Hence we may derive a normal fine ultrafilter $Z$ on $P_\kappa(\lambda^+)$ from $j_W$ using $j_W[\lambda^+]$.
    Now Theorem \ref{theorem: normal nongalvin} applies to $Z$ and $Z \leq_{RK} W \cong_{RK}U$, so by Lemma \ref{lemma: galvin RK} we may conclude $\lnot \Gal(U,\kappa,2^{\lambda^+})$.
\end{proof}

\subsection{The case $\cf(\kappa) = \lambda$}

Here the situation is more complicated, and the only one yielding a positive result. First we will show that the proof of Galvin's theorem for normal filters on a cardinal actually generalizes:

\begin{theorem}\label{theorem: cf(kappa) galvin}
    Suppose $\cf(\lambda) = \kappa$ and $2^{<\lambda} = \lambda$.
    Let $U$ be a normal, fine ultrafilter on $P_\kappa(\lambda)$.
    Then $ \Gal(U, \kappa, \lambda^+)$.
\end{theorem}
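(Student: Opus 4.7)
The plan is to mirror the proof of Galvin's theorem above (Theorem~\ref{thm: galvin theorem}) in the two-cardinal setting, replacing the ordinal initial segments $\xi<\kappa$ by the sub-families $P_\kappa(\lambda_\xi)$, where $\l \lambda_\xi:\xi<\kappa\r$ is a strictly increasing sequence cofinal in $\lambda$ (which exists since $\cf(\lambda)=\kappa$).

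Given $\l X_i:i<\lambda^+\r\subseteq U$, for each $\alpha<\lambda^+$ and $\xi<\kappa$ I would define the agreement sets
\[ H_{\alpha,\xi}=\{i<\lambda^+:X_i\cap P_\kappa(\lambda_\xi)=X_\alpha\cap P_\kappa(\lambda_\xi)\}. \]
The first step is to show, by a two-stage pigeonhole exactly as in Galvin's proof, that there exists $\alpha^*<\lambda^+$ with $|H_{\alpha^*,\xi}|=\lambda^+$ for every $\xi<\kappa$. The cardinal arithmetic does cooperate: $|P_\kappa(\lambda_\xi)|<\lambda$ under our hypotheses (measurability of $\kappa$ together with $\cf(\lambda)=\kappa$ and K\"onig's theorem rule out $2^\mu=\lambda$ for any $\mu<\lambda$), so there are at most $\lambda$ possible values for $X_\alpha\cap P_\kappa(\lambda_\xi)$, making the second pigeonhole step go through.

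Then I would recursively pick $\beta_\xi\in H_{\alpha^*,\xi}\setminus\{\beta_\eta:\eta<\xi\}$ and claim that $Y=\{\beta_\xi:\xi<\kappa\}$ witnesses $\Gal(U,\kappa,\lambda^+)$. The crucial step --- replacing Galvin's ordinal diagonal intersection --- is to set, for each $\alpha<\lambda$,
\[ \tilde X_\alpha=\bigcap\{X_{\beta_\xi}:\lambda_\xi\leq\alpha\}, \]
which is an intersection of fewer than $\kappa$ sets in $U$ (since $\alpha<\lambda$ and the $\lambda_\xi$ are cofinal in $\lambda$), hence in $U$ by $\kappa$-completeness. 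By normality, $X^*:=X_{\alpha^*}\cap\triangle_{\alpha<\lambda}\tilde X_\alpha\in U$. To conclude I would verify $X^*\subseteq X_{\beta_\xi}$ for every $\xi<\kappa$ by a case split: for $x\in X^*$ and $\xi<\kappa$, if some $\alpha\in x$ satisfies $\lambda_\xi\leq\alpha$ then $X_{\beta_\xi}$ appears in the intersection defining $\tilde X_\alpha$, and the diagonal intersection gives $x\in\tilde X_\alpha\subseteq X_{\beta_\xi}$; otherwise $x\subseteq\lambda_\xi$, so $x\in P_\kappa(\lambda_\xi)$, and the agreement built into $\beta_\xi\in H_{\alpha^*,\xi}$ yields $x\in X_{\alpha^*}\cap P_\kappa(\lambda_\xi)=X_{\beta_\xi}\cap P_\kappa(\lambda_\xi)\subseteq X_{\beta_\xi}$.

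The main obstacle is finding the right two-cardinal analog of Galvin's ordinal diagonal step. In Galvin's proof the dichotomy was ``$\zeta<i$ versus $\zeta\geq i$'', but here the natural dichotomy becomes ``$x\subseteq\lambda_\xi$ versus $x\not\subseteq\lambda_\xi$''. The definition of $\tilde X_\alpha$ is tailored precisely so that in the ``$x\not\subseteq\lambda_\xi$'' case, some $\alpha\in x$ with $\alpha\geq\lambda_\xi$ witnesses that $X_{\beta_\xi}$ lies inside $\tilde X_\alpha$, allowing the diagonal intersection to capture $x\in X_{\beta_\xi}$. Once this is in place the proof runs essentially verbatim from Galvin's.
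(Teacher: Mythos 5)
Your proof is correct, but it takes a genuinely different route from the paper's. The paper first invokes Goldberg's theorem that $U$ is Rudin--Keisler equivalent to a unique isonormal (weakly normal) ultrafilter $W$ on the cardinal $\lambda$, and then runs the Galvin argument for $W$: the agreement sets $H_{\alpha,\xi}$ are indexed by $\xi<\lambda$, and Galvin's ordinal diagonal intersection is replaced by an $f$-diagonal intersection $\triangle^f_{i<\kappa}X_{\beta_i}$ for a weakly monotone $f\colon\lambda\to\kappa$ with $[f]_W=\kappa$, whose existence and the closure of $W$ under such intersections are verified inside the ultrapower (via the generator $\theta$ of Remark \ref{remark: isonormal theta}). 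You instead stay on $P_\kappa(\lambda)$: you fix a cofinal sequence $\l\lambda_\xi\mid\xi<\kappa\r$, measure agreement on the traces $X_\alpha\cap P_\kappa(\lambda_\xi)$, and replace the $f$-diagonal intersection by the auxiliary sets $\tilde X_\alpha=\bigcap\{X_{\beta_\xi}:\lambda_\xi\le\alpha\}$ (each an intersection of fewer than $\kappa$ members of $U$, hence in $U$ by $\kappa$-completeness) followed by the ordinary diagonal intersection $\triangle_{\alpha<\lambda}\tilde X_\alpha$ given by normality; your dichotomy ``$x\subseteq\lambda_\xi$ versus some $\alpha\in x$ with $\alpha\ge\lambda_\xi$'' plays exactly the role of Galvin's ``$\zeta\ge i$ versus $\zeta<i$'' split, and the verification goes through. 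Your cardinal-arithmetic step is also fine: $\kappa$ is inaccessible, so with $2^{<\lambda}=\lambda$, $\cf(\lambda)=\kappa$ and K\"onig's theorem, $\lambda$ is a strong limit, whence (taking $\lambda_\xi\ge\kappa$) $|P_\kappa(\lambda_\xi)|\le 2^{\lambda_\xi}<\lambda$ and there are at most $\lambda$ possible traces, which is all the pigeonhole needs. What the paper's approach buys is the reduction of two-cardinal measures to ultrafilters on cardinals, a tool it reuses elsewhere (e.g., in Theorem \ref{theorem: cf kappa nongalvin}); what yours buys is a self-contained and more elementary argument using only normality, fineness, and $\kappa$-completeness of $U$, with no appeal to Goldberg's classification or to ultrapower computations.
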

\begin{proof}
    By Theorem $4.4.37$ in \cite{Goldberg+2022}, there is a unique weakly normal ultrafilter $W$ on $\lambda$ such that $W \cong_{RK} U$. 
    Furthermore, $M_W$ is closed under $\lambda$-sequences.
    We will show $\Gal(W, \kappa, 2^\lambda)$ holds.
    Let $\l X_i : i < \lambda^+ \r \subseteq W$.
    For $\alpha < \lambda^+$ and $\xi < \lambda$ let
    \[H_{\alpha, \xi} = \{ i < \lambda^+ : X_i \cap \xi = X_\alpha \cap \xi \}\]
    \begin{claim}
        There is an $\alpha^* < \lambda^+$ such that $|H_{\alpha^*,\xi}| = \lambda^+$ for every $\xi < \lambda$.
    \end{claim}
    \begin{claimproof}
    Otherwise, for every $\alpha < \lambda^+$ there is $\xi_\alpha < \lambda$ such that $|H_{\alpha,\xi_\alpha}| \leq \lambda$.
        There must be $X \subseteq \lambda^+$ with $|X| = \lambda^+$ and a $\xi^* < \lambda$ such that $\xi_\alpha = \xi^*$ for all $\alpha \in X$.
        Since $2^{<\lambda}=\lambda$ and $\xi^* < \lambda$ there are fewer than $\lambda$ many possibilities for $X_\alpha \cap \xi^*$.
        Hence we may shrink $X$ to $X' \subseteq X$ with $|X'| = \lambda^+$ and find a $E^* \subseteq \xi^*$ such that $X_\alpha \cap \xi^* = E^*$ for every $\alpha \in X'$.
        Then for every $\alpha \in X'$, $H_{\alpha,\xi_\alpha} = H_{\alpha,\xi^*}$.
        Denote this set by $H^*$.
        We see that $X' \subseteq H^*$, but $|X'|$ is strictly larger than $|H^*|$, a contradiction.
    \end{claimproof}
    
    Let $\alpha^*$ be as in the claim.
    Fix $f : \lambda \to \kappa$ so that $[f]_W = \kappa$.
    Note that we may take $f$ to be monotone. 
    % Indeed, $W$ is derived from $j_U$ using the least generator $\theta\geq\sup j[\lambda]$  \cite[Thm. 4.4.36]{Goldberg+2022}. 
    % The proof of \cite[Thm. 4.4.36]{Goldberg+2022} in fact show that $\theta=\sup k[(\lambda_D^+)^{M_D}]$ where $D$ is the normal ultrafilter on $\kappa$ derived from $U$, $\lambda_D=\sup j_D[\lambda]$, and $k:M_D\to M_U$ is the factor map. 
    % By \cite[4.4.10]{Goldberg+2022}, $crit(k)>\kappa$ and therefore (since $\cf(\lambda)=\kappa$) $k(\lambda_D)=\sup j_U[\lambda]$. 
    % In particular, $\theta<k((\lambda_D)^+)^{M_D})=((\sup j_U[\lambda
    % ])^+)^{M_U}$. 
    % To define $f$, fix a club $C$ of cardinals in $\lambda$ of order-type $\kappa$. 
    % Set $f:\lambda\to \kappa$ by $f(\alpha)=i$ for the unique $i$ such that $\alpha\in [C(i),C(i+1))$, then clearly $f$ is weakly monotone, and we claim that $j_W(f)(\id_W)=j_U(f)(\theta)=\kappa$. 
    % Indeed, if $i<\kappa$ then $j_U(C)(i)=j_U(C(i))<\sup j_U(\lambda)<\theta$, and since $C$ is a club, $\sup j_U(\lambda)=j_U(C)(\kappa)$. Since $C$ consistst of cardinals,  $\theta<((\sup j_U[\lambda
    % ])^+)^{M_U}\leq j(C)(\kappa+1)$, hence $j_U(f)(\theta)=\kappa$ as wanted.
    To see this, let $g \colon \kappa \to \lambda$ be increasing, continuous, and cofinal, and assume the range of $g$ consists only of cardinals.
    For every $\beta < \lambda$ let $f(\beta) = \alpha$ where $\alpha$ is least such that $g(\alpha)> \beta$.
    Then $f$ is clearly weakly monotone.
    Now we check that $[f]_W = \kappa$.
    As mentioned in Remark \ref{remark: isonormal theta}, there is some ordinal $\theta$ strictly between $\sup j_U[\lambda]$ and $j(\lambda)$ such that
    \[ X \in W \iff \theta \in j_U(X)\]
    So we shall show that $j_W(f)([\id]_W) = j_U(f)(\theta) = \kappa$.
    If $\alpha < \kappa$ then $j_U(g)(\alpha) = j_U(g(\alpha)) < \sup j_U[\lambda] < \theta$.
    Since $g$ is continuous, $\sup j_U[\lambda] = j_U(g)(\kappa)$.
    Since $g(\alpha)$ is always a cardinal, $\theta < |\sup j_U[\lambda]|^{+M_U} \leq j(g)(\kappa+1)$.
    Hence $j_U(f)(\theta) = \kappa$, as desired.

    Now for all $i < \kappa$ choose $\beta_i \in H_{\alpha^*,g(i)+1} \setminus \{\beta_j \mid j < i\}$ for $i < \kappa$.
    We define the $f$-diagonal intersection:
    \[D = \triangle^f_{i < \kappa} X_{\beta_i} = \{ \eta < \lambda \mid \forall i < f(\eta) \, (\eta \in X_{\beta_i})\}\]
    Note that $j_W(\vec{X})_i= j_W(X_{\beta_i})$ for all $i < \kappa$ and $j_W(f)([\id]_W) = \kappa$, so it follows that $W$ is closed under $f$-diagonal intersections.
    Now it suffices to show that $$X_{\alpha^*} \cap D \subseteq \bigcap_{i < \kappa} X_{\beta_i}.$$
    Towards this end suppose $\zeta \in X_{\alpha^*} \cap D$.
    Fix $i < \kappa$.
    If $i < f(\zeta)$ then $\zeta \in X_{\beta_i}$ by definition of $D$.
    If $i \geq f(\zeta)$, since $\zeta \in X_{\alpha^*}$ then $X_{\beta_i} \cap \lambda_i+1 = X_{\alpha^*} \cap \lambda_i+1$, so $\zeta \in X_{\beta_i}$.
\end{proof}

We cannot use the strategy in the proof of Theorem \ref{theorem: normal nongalvin} since the cofinality of $\lambda$ is too small in this case to show the failure of Galvin's property in the same way.
However we can use a simple counting argument to show some failure of Galvin's property.
The following proof was shown to the second author by Goldberg.
\begin{theorem}\label{theorem: cf kappa nongalvin}
    Suppose $\cf(\lambda) = \kappa$ and $2^{<\lambda}=\lambda$.
    Let $U$ be a normal, fine ultrafilter on $P_\kappa(\lambda)$.
    Then $\neg \Gal(U,(2^{\kappa})^+,2^\lambda)$.
\end{theorem}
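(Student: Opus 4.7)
The plan is to reuse the family $\{B_Y : Y \subseteq \lambda\}$ from the proof of Theorem~\ref{theorem: normal nongalvin}, replacing the unboundedness-based pigeonhole of Lemma~\ref{lemma: unbounded lemma} (which genuinely fails when $\cf(\lambda)=\kappa$) by a direct counting argument tailored to this case.

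As in Theorem~\ref{theorem: normal nongalvin}, set $\mathcal{A} = \{j_U(Y) \cap \sup j_U[\lambda] : Y \subseteq \lambda\} \in M_U$ by Theorem~4.3.4 of \cite{Goldberg+2022}, let $f : P_\kappa(\lambda) \to V$ represent $\mathcal{A}$ in $M_U$ with $f(x) \subseteq P(\sup(x))$, and define $B_Y = \{x : Y \cap \sup(x) \in f(x)\}$. Each $B_Y$ lies in $U$ since $j_U(Y)\cap \sup j_U[\lambda] \in \mathcal{A} = j_U(f)(j_U[\lambda])$, yielding a family of $2^\lambda$ sets in $U$. The same computation as in Theorem~\ref{theorem: normal nongalvin} gives $|\mathcal{A}|^{M_U} < j_U(\kappa)$, using $\lambda < j_U(\kappa)$ together with the fact that $j_U(\kappa)$ is measurable (hence strong limit) in $M_U$. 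By \L{}o\'{s}, the set $\{x : |f(x)| < \kappa\}$ lies in $U$.

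The new combinatorial observation is the following claim: \emph{for any $S \subseteq P(\lambda)$ with $|S| = (2^\kappa)^+$, there exists $\xi < \lambda$ such that $|\{Y \cap \xi : Y \in S\}| > 2^\kappa$.} Indeed, fix a cofinal sequence $\langle \lambda_i : i < \kappa\rangle$ in $\lambda$; if $|\{Y \cap \lambda_i : Y \in S\}| \leq 2^\kappa$ for every $i<\kappa$, then since every $Y \in S$ is determined by the tuple $(Y \cap \lambda_i)_{i<\kappa}$, we would get $|S| \leq (2^\kappa)^\kappa = 2^\kappa$, contradicting $|S| = (2^\kappa)^+$.

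Now suppose toward a contradiction that $\bigcap_{Y \in S} B_Y \in U$ for some $S$ of size $(2^\kappa)^+$. Use the claim to obtain $\xi < \lambda$, and then extract $T \subseteq S$ with $|T| = (2^\kappa)^+$ on which $Y \mapsto Y \cap \xi$ is injective. The cone $\{x : \xi+1 \in x\}$ lies in $U$ by fineness, so the intersection $\bigcap_{Y \in T} B_Y \cap \{x : |f(x)| < \kappa\} \cap \{x : \xi+1 \in x\}$ is in $U$ and in particular nonempty; pick any $x$ in it. Then $\sup(x) > \xi$, so distinct values of $Y \cap \xi$ on $T$ yield distinct values of $Y \cap \sup(x) \in f(x)$, giving $(2^\kappa)^+ \leq |f(x)| < \kappa$, a contradiction.

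The main obstacle is locating the correct combinatorial claim to take the place of Lemma~\ref{lemma: unbounded lemma}: one cannot hope to push $|f(x)|$ below any $\theta_0 < \kappa$ on cofinally many $x$ when $\cf(\lambda) = \kappa$, so the only bound available on $|f(x)|$ is $<\kappa$. The cardinal identity $(2^\kappa)^\kappa = 2^\kappa$ is exactly what is needed to guarantee that any $(2^\kappa)^+$-family of subsets of $\lambda$ already distinguishes itself below some bounded $\xi < \lambda$, which makes the weaker ultrapower bound sufficient.
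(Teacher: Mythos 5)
Your argument is correct, and it reaches the conclusion by a route that differs from the paper's in two respects. The paper does not stay on $P_\kappa(\lambda)$: it passes to the isonormal ultrafilter $W$ on $\lambda$ that is Rudin--Keisler equivalent to $U$ (Theorem \ref{Goldberg Theorem}), works with the sets $B_\alpha=\{\beta<\lambda : X_\alpha\cap\beta\in\mathcal{A}_\beta\}\in W$, and, assuming $(2^\kappa)^+$ of them intersect in $W$, picks a cofinal $C\subseteq B$ of order type $\kappa$ and injects the index set into $\prod_{\beta\in C}\mathcal{A}_\beta$, a product of size at most $\kappa^\kappa=2^\kappa$; the failure for $W$ then transfers back to $U$ via Lemma \ref{lemma: galvin RK}. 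You instead keep the family $B_Y\subseteq P_\kappa(\lambda)$ from Theorem \ref{theorem: normal nongalvin} and replace Lemma \ref{lemma: unbounded lemma} by the pigeonhole observation that any $(2^\kappa)^+$ distinct subsets of $\lambda$ are already separated below a single $\xi<\lambda$ (using $\cf(\lambda)=\kappa$ and $(2^\kappa)^\kappa=2^\kappa$), after which a single point $x$ with $\sup(x)>\xi$ and $|f(x)|<\kappa$ gives the contradiction. Both proofs ultimately rest on the same counting principle (fibers of size $<\kappa$ versus $(2^\kappa)^\kappa=2^\kappa$); what yours buys is that it avoids the detour through the isonormal ultrafilter and the RK transfer, at the cost of needing the representation of $\mathcal{A}$ over $P_\kappa(\lambda)$ directly.

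One caveat on that last point: you justify $\mathcal{A}=\{j_U(Y)\cap\sup j_U[\lambda] : Y\subseteq\lambda\}\in M_U$ by citing Theorem 4.3.4 of \cite{Goldberg+2022} exactly as in Theorem \ref{theorem: normal nongalvin}, but there $\lambda$ had cofinality greater than $\kappa$, whereas here $\lambda$ is singular of cofinality $\kappa$, and it is not clear that the cited result applies verbatim; this is why the paper instead extracts $\mathcal{A}\in M_W=M_U$ (with fibers of size $<\kappa$) from the isonormal analysis behind Theorem \ref{Goldberg Theorem}. The fact you need is nevertheless true in your setting and can be argued directly: fixing a cofinal sequence $\langle\lambda_i : i<\kappa\rangle$ in $\lambda$, each $j_U(Y)\cap\sup j_U[\lambda]$ equals $\bigcup_{i<\kappa}j_U(Y\cap\lambda_i)$, and since $2^{<\lambda}=\lambda$ and $M_U$ is closed under $\lambda$-sequences, the sequence $\langle j_U\restriction P(\lambda_i) : i<\kappa\rangle$ lies in $M_U$, so $\mathcal{A}$ is the set of unions of coherent sequences through it; the same computation gives $M_U\models|\mathcal{A}|<j_U(\kappa)$, which is the bound your Łoś step uses. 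With that justification supplied (or with the paper's citation substituted), your proof is complete.
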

\begin{proof}
    Let $W$ be the isonormal ultrafilter on $\lambda$ that is RK-equivalent to $U$.
    Let $\lambda^*\coloneq\sup(j_U[\lambda])$ and $\mathcal{A} = \l j_U(S) \cap \lambda^* \mid S \subseteq \lambda \r$. By Theorem \ref{Goldberg Theorem}, $\mathcal{A}\in M_W$ and let $\mathcal{A} = [\beta \mapsto \mathcal{A}_\beta]_W$. We may assume that for every $\beta<\lambda$, $|\mathcal{A}_\beta|<\kappa$.
    Let $\l X_\alpha \mid \alpha < 2^\lambda \r $ be a one-to-one enumeration of subsets of $\lambda$.
    For $\alpha < \lambda$ let
    \[ B_\alpha =  \{ \beta < \lambda : X_\alpha \cap \beta \in \mathcal{A}_\beta\}\]
    Suppose towards a contradiction that there is some $I \subseteq 2^\lambda$ such that $|I| = (2^\kappa)^+$ and $B \coloneq \bigcap_{\alpha \in I} B_\alpha\in U$. Let $C \subseteq B$ be cofinal in $\lambda$ with $ot(C) = \kappa$.
    Now consider
    \begin{align*}
        f \colon \,& I \to \prod_{\beta \in C} \mathcal{A}_\beta\\
        &\alpha \mapsto \l X_\alpha \cap \beta \mid \beta \in C \r
    \end{align*}
    Then $f$ is a well-defined injection by the definition of $B$ but $|I| = (2^\kappa)^+$ and the above product has size $2^\kappa$, a contradiction.
\end{proof}
The above proof shows that the sets $\l B_\alpha\mid \alpha<2^\lambda\r$ have the following property:
For any $I\in [2^\lambda]^{(2^\kappa)^+}$, $\bigcap_{\alpha\in I} B_\alpha$ must be bounded in $\lambda$.

\subsection{Possible strengthenings}
Under the assumptions that $2^{<\lambda}=\lambda$, the Galvin property for a normal fine ultrafilter on $P_\kappa(\lambda)$ is fully settled when $\cf(\lambda)\neq \kappa$. For $\cf(\lambda) = \kappa$, there is still one last case:
\begin{question}\label{question: cf kappa galvin}
    Let $U$ be a normal fine ultrafilter on $P_\kappa(\lambda)$, where $\cf(\lambda) = \kappa$.
    Must $\Gal(U,2^\kappa,2^\lambda)$ hold? Must $\Gal(U,2^\kappa,2^\lambda)$ fail?
\end{question}
We conjecture that the answer is independent.
Another question we would like to address is the necessity of the assumptions
of Theorem \ref{theorem: fine nongalvin} (or rather its corollary \ref{cor: regular non-galvin}) regarding $U$ being a $(\kappa,\lambda)$-regular $\sigma$-complete ultrafilter on $P_\kappa(\lambda)$. The cardinal arithmetic assumptions of this theorem will be considered in Section~\ref{Sec: small generating}.

Let us consider the fineness assumption. There is a simple counter example if we take $\lambda$ to be much larger than $\kappa$.
\begin{example}
    Suppose $\kappa<\lambda$ are measurable cardinals, and let $U_0,U_1$ be normal ultrafilters on $\kappa$ and $\lambda$ respectively. Let $W$ be an ultrafilter over $\lambda$ which is Rudin-Keisler equivalent to the product ultrafilter $U_0\cdot U_1$ on $\kappa\times \lambda$. So $W$ is a $\kappa$-complete ultrafilter on $\lambda$ and $2^{<\lambda}=\lambda$ but $W$ satisfies $\Gal(U,\lambda,\lambda^+)$. Indeed, if $\l A_\alpha\mid \alpha<\lambda^+\rangle\subseteq U$, then for each $\alpha$ we can find $A_{\alpha,0}\in U_0$ and $A_{\alpha,1}\in U_1$ such that $A_{\alpha,0}\times A_{\alpha,1}\subseteq A_\alpha$. We can now stabilize $A_{\alpha,0}$ for many $\alpha$'s, so we assume without loss of generality that $A_{\alpha,0}=X_0\in U_0$ for every $\alpha<\lambda^+$. Now we apply Galvin's theorem for the normal measure $U_1$ on $\lambda$, to find $I\in [\lambda^+]^{\lambda}$ such that $X_1=\bigcap_{i\in I}A_{\alpha,1}\in U_1$. It follows that $X_0\times X_1\subseteq \bigcap_{\alpha\in I}A_\alpha$, as wanted. 
\end{example}
   For ultrafilters over successor cardinals we need to work a bit harder. We can use recent results  \cite{TomJustinLuke} regarding the consistency of non-Tukey-top ultrafilters on successor cardinals.
For the convenience of the reader, we translate the results to our settings.
\begin{theorem}[Essentially {\cite[Thm. 5.4]{TomJustinLuke}}]
    Let $\nu^{<\nu}=\nu$. Suppose that $2^\nu<2^{\nu^+}$ and that there is a normal $\nu^+$-dense ideal\footnote{An ideal $I$ on $\rho$ is called $\mu$-dense if $P(\rho)/I$ has a dense subset of size $\mu$.} on $\nu^+$. Then there is a uniform ultrafilter $U$ on $\nu^+$ such that $\Gal(U,\nu^+,2^{\nu^+})$. 
\end{theorem}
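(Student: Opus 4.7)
The plan, adapting \cite[Thm.~5.4]{TomJustinLuke}, is to extend the dual filter $I^*$ of the dense ideal $I$ to an ultrafilter $U$ that inherits a small mod-$I$ base from the $\nu^+$-dense family of $I$, and then to reduce $\Gal(U,\nu^+,2^{\nu^+})$ to Galvin's theorem applied to the normal filter $I^*$. Fix a family $\mathcal{D}=\{D_\alpha:\alpha<\nu^+\}\subseteq I^+$ of representatives of a dense subset of $P(\nu^+)/I$, which we may assume is closed under finite meets modulo $I$. Let $U$ be any uniform ultrafilter on $\nu^+$ extending $I^*$ (it exists by Zorn's lemma). Since $U$ induces an ultrafilter on the Boolean algebra $P(\nu^+)/I$, density of $\mathcal{D}$ gives: for every $B\in U$, there is $\alpha<\nu^+$ such that $D_\alpha\in U$ and $D_\alpha\setminus B\in I$. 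Thus $\mathcal{D}\cap U$ is a mod-$I$ base for $U$ of cardinality at most $\nu^+$.

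To verify $\Gal(U,\nu^+,2^{\nu^+})$, let $\langle B_\xi:\xi<2^{\nu^+}\rangle\subseteq U$. For each $\xi$, pick $\alpha(\xi)<\nu^+$ with $D_{\alpha(\xi)}\in U$ and $D_{\alpha(\xi)}\setminus B_\xi\in I$. Since $|\mathcal{D}|=\nu^+<2^{\nu^+}$, the pigeonhole principle produces $\alpha^*<\nu^+$ and $J_0\in[2^{\nu^+}]^{2^{\nu^+}}$ with $\alpha(\xi)=\alpha^*$ for every $\xi\in J_0$. Setting $C_\xi:=(\nu^+\setminus D_{\alpha^*})\cup B_\xi$ for $\xi\in J_0$, one has $\nu^+\setminus C_\xi=D_{\alpha^*}\setminus B_\xi\in I$, so $C_\xi\in I^*$.

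Applying Galvin's theorem (Theorem~\ref{thm: galvin theorem}) to the normal filter $I^*$ and the sequence $\langle C_\xi:\xi\in J_0\rangle$ yields $J\in[J_0]^{\nu^+}$ with $\bigcap_{\xi\in J}C_\xi\in I^*$. Distributing, $\bigcap_{\xi\in J}C_\xi=(\nu^+\setminus D_{\alpha^*})\cup\bigcap_{\xi\in J}B_\xi$, so $D_{\alpha^*}\setminus\bigcap_{\xi\in J}B_\xi\in I$. Combined with $D_{\alpha^*}\in U$ and $I\cap U=\emptyset$, this forces $\bigcap_{\xi\in J}B_\xi\in U$, as required; uniformity of $U$ is inherited from $I^*$.

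The main obstacle is the final invocation of Theorem~\ref{thm: galvin theorem}: its proof as presented requires $2^{<\nu^+}=\nu^+$ (equivalently $2^\nu=\nu^+$), which our hypotheses $\nu^{<\nu}=\nu$ and $2^\nu<2^{\nu^+}$ do not force. When $2^\nu=\nu^+$, the argument just given completes the proof. For the general case $2^\nu>\nu^+$, one must replace the $2^\nu$-sized classification of initial segments $X_\alpha\cap\xi$ appearing in the proof of Theorem~\ref{thm: galvin theorem} by a $\nu^+$-sized classification coming from the $\nu^+$-denseness of $P(\nu^+)/I$; this is where the strict inequality $2^\nu<2^{\nu^+}$ enters essentially, and the delicate combinatorial argument is developed in \cite{TomJustinLuke}.
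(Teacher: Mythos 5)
There is a genuine gap, and it occurs at the very first step. You take $U$ to be \emph{any} uniform ultrafilter extending $I^*$ (via Zorn's lemma) and assert that density of $\mathcal{D}$ in $P(\nu^+)/I$ already gives: for every $B\in U$ there is $D_\alpha\in U$ with $D_\alpha\setminus B\in I$. That is not a consequence of $U$ being an ultrafilter; it is a genericity-type property that has to be built into $U$. Concretely, by Stone duality the ultrafilters on $P(\nu^+)$ extending $I^*$ are the points of the Stone space of the quotient algebra, and by compactness every such ultrafilter through a class $[B]$ meets $\{[D]\in\mathcal{D}:[D]\le[B]\}$ only if $[B]$ is a \emph{finite} join of members of $\mathcal{D}$ below it. Since an Ulam matrix yields $\nu^+$ pairwise disjoint $I$-positive sets, the quotient has $2^{\nu^+}$ many classes, while there are only $\nu^+$ finite joins from $\mathcal{D}$; hence there are positive $B$ and ultrafilters $U\supseteq I^*$ with $B\in U$ but no member of $\mathcal{D}\cap U$ below $B$ modulo $I$. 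So an arbitrary extension of $I^*$ need not have $\mathcal{D}\cap U$ as a $\subseteq_I$-base, and indeed if your reduction were valid it would show that \emph{every} uniform ultrafilter extending $I^*$ has the Galvin property, which is far too strong. Constructing an ultrafilter for which the dense family really is a base modulo $I$ (or which otherwise supports the pigeonhole step) is precisely the nontrivial content of \cite[Thm.~5.4]{TomJustinLuke}, and it is there that the hypotheses $\nu^{<\nu}=\nu$ and $2^\nu<2^{\nu^+}$ are actually used; they cannot be bypassed by a plain Zorn extension.

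The second problem you identify yourself: the closing appeal to Theorem~\ref{thm: galvin theorem} for the normal filter $I^*$ needs $2^{<\nu^+}=\nu^+$, i.e.\ $2^\nu=\nu^+$, which the hypotheses do not provide, and you explicitly defer the general case to the cited paper. So even granting a suitably constructed $U$, your argument covers only the special case $2^\nu=\nu^+$, whereas the theorem is stated under the weaker assumption $2^\nu<2^{\nu^+}$. For comparison, note that the paper does not reprove this statement at all: it is quoted as an import from \cite{TomJustinLuke} (``essentially Theorem 5.4''), so the missing construction of $U$ and the handling of $2^\nu>\nu^+$ are exactly the parts of that external argument your proposal would need to reconstruct; as written, the proposal does not establish the theorem.
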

Note that the ultrafilter produced by the above theorem is non-$(\nu,\nu^+)$-regular.
 The consistency of dense ideals on small cardinals was proven to be consistent by Woodin \cite{Woodin+2010}, and recent results of Eskew and Hayut \cite{eskew2024denseideals} made significant progress in the study of such ideals.
\begin{theorem}[\cite{eskew2024denseideals}]
    It is consistent (relative to large cardinals) that $\GCH$ holds and for every $n < \omega$ there is a uniform, normal $\omega_{n+1}$-dense ideal on $\omega_{n+1}$.
\end{theorem}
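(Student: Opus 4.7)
The plan is to derive the result from a suitable sequence of large cardinals in the ground model using Foreman's duality machinery, which converts almost-huge embeddings into normal dense ideals on successor cardinals after appropriate collapse. The overall shape of the proof should be an iterated forcing that simultaneously arranges all the dense ideals while carefully controlling cardinal arithmetic.

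First, I would begin in a ground model carrying an increasing sequence $\langle \kappa_n : n < \omega \rangle$ where each $\kappa_n$ is almost huge with target $\kappa_{n+1}$, so that we have elementary embeddings $j_n : V \to M_n$ with $\crit(j_n) = \kappa_n$, $j_n(\kappa_n) = \kappa_{n+1}$, and $M_n$ closed under ${<}\kappa_{n+1}$-sequences. The intent is that, after the eventual forcing, $\kappa_n$ will become $\omega_{n+1}$, and the generic restriction of $j_n$ will witness the dense ideal on $\omega_{n+1}$.

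Second, I would construct an Easton-support product (or carefully chosen iteration) $\P = \prod_n \P_n$ where each $\P_n$ is a collapse turning the interval $[\omega_n, \kappa_n)$ into $\omega_n$; the natural candidate is a Silver-style or universal collapse tailored so that the quotient $j_n(\P_n)/G_n$ remains sufficiently closed and distributive. The density of the induced ideal comes from Foreman's duality: under the right chain-condition/closure trade-off, a dense $\omega_{n+1}$-sized subset of $P(\omega_{n+1})/I$ is read off directly from the collapse's quotient structure, yielding a normal uniform $\omega_{n+1}$-dense ideal. Indestructibility preparations (Laver-style, adapted to the almost-huge context) are plausibly needed beforehand so that the forcing below $\kappa_n$ does not destroy the almost-hugeness of $\kappa_m$ for $m > n$.

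The main obstacle is doing all of this \emph{simultaneously} while preserving $\GCH$. Each stage on its own is close to classical Woodin-Foreman territory, but the interaction between consecutive collapses is delicate: the forcing producing the dense ideal on $\omega_{n+1}$ must cohere with the forcing at $\omega_{n+2}$, and each step has to avoid inflating $2^{\omega_n}$ beyond $\omega_{n+1}$. This likely requires fine bookkeeping of the chain condition (to preserve cardinals above) together with closure (to preserve $\GCH$ below), and careful verification that the generic embedding $j_n^* : V[G] \to M_n[G^*]$ still has the almost-huge target structure needed to extract a dense, \emph{normal} ideal rather than merely a precipitous one. For the full details I would defer to the construction of Eskew--Hayut in \cite{eskew2024denseideals}.
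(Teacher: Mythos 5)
There is no internal proof to compare against here: the paper states this result as a black-box citation of Eskew--Hayut and never argues it. Your outline does track the strategy known from that line of work (Woodin's dense-ideal constructions and their simultaneous versions): a coherent sequence $\langle\kappa_n : n<\omega\rangle$ with almost-huge-type embeddings whose targets line up ($j_n(\kappa_n)=\kappa_{n+1}$), interleaved collapses making $\kappa_n=\omega_{n+1}$, and a duality-style analysis of the quotient to read off a dense normal ideal. In that sense the ingredients you name are the right ones.

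But as a proof attempt it has a genuine gap: everything that makes the theorem a theorem is exactly what you list as ``obstacles'' and then defer to the cited paper. Extracting an $\omega_{n+1}$-\emph{dense} (not merely saturated or precipitous) normal ideal requires a very particular collapse (a Silver-style/universal or ``anonymous'' collapse) whose quotient $j_n(\P_n)/G_n$ is forced to have a dense set of size $\omega_{n+1}$ together with a master-condition/term-ultrafilter argument; ``chain-condition/closure trade-off'' bookkeeping does not by itself yield density. Getting this at \emph{every} $n$ simultaneously while the collapse at level $n+1$ lives inside the structure the level-$n$ embedding must absorb, and while $2^{\omega_n}=\omega_{n+1}$ is preserved at all levels, is the hard coherence problem, and no mechanism is proposed for it. Two smaller points: an Easton-support product is not the natural support for an $\omega$-sequence of collapses below $\aleph_\omega$ (full or finite support is what such constructions use), and Laver-style indestructibility is not available off the shelf for almost-huge embeddings, so the ``preparation'' step cannot simply be assumed. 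Since the decisive steps are delegated back to \cite{eskew2024denseideals}, the proposal is in effect a restatement of the citation the paper itself makes rather than an independent argument.
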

This gives the consistency of an ultrafilter on a successor cardinal which satisfies the Galvin property; however, it is unclear if this could be made the successor of a supercompact cardinal:
\begin{question}
    Is it consistent to have a normal $\kappa^+$-dense ideal on $\kappa^+$ where $\kappa$ is $\kappa^+$-supercompact?
\end{question}
%And the second one of Huberich \cite{Huberich}:
%\begin{theorem}
%    Suppose $\nu$ is regular and there is a $\nu^+$-dense ideal $I$ over $\nu^+$.
%    Then there is an ultrafilter $U$ extending the dual filter, $I^*$, of $I$, such that there are sets $\langle X_\alpha : \alpha < 2^\nu\rangle$ such that $U$ is generated by $I^* \cup \langle X_\alpha : \alpha < 2^\nu\rangle$.
%\end{theorem}
%The proof of the following corollary can be found in \cite{TomJustinLuke}.
%\begin{corollary}
%    It is consistent (relative to large cardinals) for a successor of a regular cardinal $\nu$ to carry an ultrafilter $U$ such that $\Gal(U,\nu,2^\nu)$ holds.
%\end{corollary}

\section{Some Applications}
\subsection{The Ultrapower Axiom} Here we present some results on the Galvin property under the Ultrapower Axiom.
First a concept that will be vital to the proof of the next theorem.
\begin{definition}
    A nonprincipal $\sigma$-complete ultrafilter $U$ is \emph{irreducible} if whenever there is an ultrafilter $W$ and $W' \in M_W$ such that $(M_{W'})^{M_W} = M_U$, either $W$ is principal or $W \leq_{RK} U$.
\end{definition}
\begin{theorem}
    (UA) Let $U$ be a $\sigma$-complete ultrafilter on $\kappa^+$.
    Then there is a fine, $\sigma$-complete ultrafilter $F$ on $P_\kappa(\kappa^+)$ such that $F \leq_{RK} U$.
\end{theorem}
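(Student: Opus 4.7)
The plan is to construct $F$ as a seed-derived ultrafilter from $j_U$: it suffices to exhibit a set $s \in M_U$ with $j_U[\kappa^+]\subseteq s$ and $|s|^{M_U} < j_U(\kappa)$. Given such an $s$, define
\[ F = \{ X \subseteq P_\kappa(\kappa^+) : s \in j_U(X)\}. \]
Then $F$ is a well-defined ultrafilter (because $s \in j_U(P_\kappa(\kappa^+))$ follows from $|s|^{M_U} < j_U(\kappa)$), $\sigma$-complete (because $U$ is), and fine (because $j_U[\kappa^+]\subseteq s$ says exactly that $s \in j_U(\{x : \alpha \in x\})$ for every $\alpha < \kappa^+$). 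Moreover, the map $\pi \colon M_F \to M_U$ sending $[f]_F \mapsto j_U(f)(s)$ is a well-defined elementary embedding satisfying $\pi \circ j_F = j_U$, which gives $F \leq_{RK} U$.

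To obtain the seed $s$, I would invoke the irreducible decomposition available under UA. Factor the ultrapower as $j_U = k \circ j_W$, where $W \leq_{RK} U$ is an irreducible $\sigma$-complete ultrafilter and $k \colon M_W \to M_U$ is an internal ultrapower of $M_W$. Because $U$ is uniform on $\kappa^+$, $W$ must be nonprincipal and $j_W$ must move some ordinal in $[\omega_1,\kappa^+]$, so $W$ interacts nontrivially with $\kappa^+$. The central step is then to show that, under UA, such an irreducible factor admits a small covering seed $s_W \in M_W$ with $j_W[\kappa^+]\subseteq s_W$ and $|s_W|^{M_W} < j_W(\kappa)$. Pushing forward by the factor map and setting $s := k(s_W)$ produces the required seed in $M_U$.

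The main obstacle is this covering step; the seed-to-ultrafilter conversion and the $\leq_{RK}$ reduction are routine. Heuristically, irreducibility of $W$ prevents $j_W$ from carrying ``more supercompactness information'' than its Rudin--Keisler predecessors, and the role of UA is to convert this qualitative fact into the quantitative covering $|s_W|^{M_W} < j_W(\kappa)$. A first irreducible factor need not directly supply a seed of this size, so one may have to iterate the decomposition, or compare $W$ against candidate fine ultrafilters on $P_\kappa(\kappa^+)$ via UA's comparison machinery, in order to extract the covering. Once such a cover is in hand, the construction of $F$ from the first paragraph completes the proof.
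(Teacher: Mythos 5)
Your first paragraph (deriving $F$ from a seed $s\in M_U$ with $j_U[\kappa^+]\subseteq s$ and $|s|^{M_U}<j_U(\kappa)$, checking $\sigma$-completeness and fineness, and getting $F\leq_{RK}U$ via $[f]_F\mapsto j_U(f)(s)$) is correct and is exactly the routine closing step of the paper's proof. The problem is that everything else is left open: the existence of the covering seed is not a lemma you can defer, it \emph{is} the theorem. You explicitly flag it as ``the central step'' and ``the main obstacle'' and offer only heuristics (``irreducibility prevents $j_W$ from carrying more supercompactness information,'' ``one may have to iterate the decomposition or compare against candidate fine ultrafilters''). The paper supplies this step by citing two specific theorems of Goldberg: the UA factorization theorem, which produces an ultrafilter $D\leq_{RK}U$ on an ordinal $\lambda<\kappa^+$ with $j_U=j_W^{M_D}\circ j_D$ where $W=j_D(U)$ is $j_D(\kappa^+)$-irreducible in $M_D$, and the theorem that under UA such an irreducible ultrafilter gives a $j_D(\kappa^+)$-supercompactness embedding of $M_D$, so that $j_W^{M_D}[j_D(\kappa^+)]\in M_U$ is the desired seed. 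Without invoking (or reproving) results at this level, your sketch has no proof of the only nontrivial claim.

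There is also a structural issue with the decomposition you propose: you factor $j_U=k\circ j_W$ with the \emph{irreducible} ultrafilter $W$ applied first (a Rudin--Fro\l{}\'ik predecessor, so that $k$ is internal to $M_W$) and then hope to cover $j_W[\kappa^+]$ inside $M_W$ by a set of size $<j_W(\kappa)$. For an arbitrary irreducible first factor this is simply false: if, say, $U$ projects to a normal measure $W$ on some measurable $\delta\leq\kappa$, then $W$ is irreducible, but $j_W[\kappa^+]$ is cofinal in $j_W(\kappa^+)$, which is regular in $M_W$, so no set of $M_W$-cardinality $<j_W(\kappa)<j_W(\kappa^+)$ can cover it; pushing forward by $k$ cannot repair this. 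So ``iterate or compare'' is not an optional refinement but a missing argument. The paper's arrangement is the opposite: the small ultrafilter $D$, living on an ordinal below $\kappa^+$, is split off \emph{first}, and the irreducible part sits on $j_D(\kappa^+)$ inside $M_D$, which is precisely what makes the supercompactness theorem for irreducible ultrafilters applicable and yields the cover $j_W^{M_D}[j_D(\kappa^+)]\supseteq j_U[\kappa^+]$ of $M_U$-size $j_D(\kappa^+)<j_U(\kappa)$. As written, your proposal establishes the easy conversion step but neither proves nor correctly targets the covering statement on which the theorem rests.
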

\begin{proof}
    Let $U$ be a $\sigma$-complete ultrafilter on $\kappa^+$.
    By Theorem $8.2.24$ in \cite{Goldberg+2022} there is an ordinal $\lambda < \kappa^+$ and an ultrafilter $D$ on $\lambda$ with $D \leq_{RK} U$ such that $j_D(U) = W$ is a $j_D(\kappa^+)$-irreducible ultrafilter and $j_U = j_W^{M_D} \circ j_D$.
    By Theorem $8.2.22$ in \cite{Goldberg+2022} $j_W^{M_D}$ is a $j_D(\kappa^+)$ supercompactness embedding from the perspective of $M_D$.
    Hence $j_W^{M_D}[j_D(\kappa^+)] \in M_U$.
    Now define an ultrafilter $F$ on $P_\kappa(\kappa^+)$ by
    \[X \in F \iff j_W^{M_D}[j_D(\kappa^+)] \in j_U(X)\]
    then $F$ is $\sigma$-complete.
    To see that $F$ is fine, notice $j_D(\kappa^+) > \kappa^+$ we have $j_W^{M_D}[j_D(\kappa^+)] \supseteq j_U[\kappa^+]$.
    Also $j_U(\kappa) = j_W(j_D(\kappa)) > j_D(\kappa^+)$ since $j_W(\kappa) > \kappa^+$.
    Hence $$|j_W^{M_D}[j_D(\kappa^+)]|^{M_U} = |j_D(\kappa^+)|^{M_U} < j_U(\kappa).$$

    Since $j_W^{M_D}[j_D(\kappa^+)]$ covers $j_U[\kappa^+]$ and has size less than $j_U(\kappa)$ we conclude that $F$ is fine.
    Furthermore $F \leq_{RK} U$ since $F$ was derived using $j_U$.
\end{proof}
This yields an improvement of \cite[Cor. 6.2]{Benhamou_Goldberg_2024}.
\begin{corollary}\label{Cor: UA}
    (UA) Let $U$ be a $\sigma$-complete ultrafilter on $\kappa^+$.
    If $2^\kappa=\kappa^+$ then $\lnot \Gal(U,\kappa,2^{\kappa^+})$.
\end{corollary}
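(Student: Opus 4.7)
The plan is a direct chain: produce a fine covering ultrafilter, apply the failure-of-Galvin result for fine ultrafilters, and push the failure down along the Rudin--Keisler order. Concretely, I would first apply the preceding theorem (the one stated immediately above the corollary) to obtain a fine, $\sigma$-complete ultrafilter $F$ on $P_\kappa(\kappa^+)$ with $F\leq_{RK} U$. This is the whole point of having proved that theorem right before the corollary, so this step is cost-free.

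Next, I would verify the hypotheses of Theorem~\ref{theorem: fine nongalvin} with $\lambda=\kappa^+$. We have $\cf(\kappa^+)=\kappa^+>\kappa$, and $2^{<\kappa^+}=2^\kappa=\kappa^+=\lambda$ by assumption, so the cardinal arithmetic and cofinality conditions are in place. The existence of a $\sigma$-complete fine ultrafilter on $P_\kappa(\kappa^+)$ together with UA forces $\kappa$ to carry enough large cardinal structure (in particular, to be a strong limit), so the strong limit hypothesis of Theorem~\ref{theorem: fine nongalvin} is met as well. Applying that theorem to $F$ yields $\lnot\Gal(F,\kappa,2^{\kappa^+})$.

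Finally, I would invoke Lemma~\ref{lemma: galvin RK}: the failure of the Galvin property at parameters $(\kappa,2^{\kappa^+})$ is preserved upward in the Rudin--Keisler order, so $F\leq_{RK} U$ and $\lnot\Gal(F,\kappa,2^{\kappa^+})$ give $\lnot\Gal(U,\kappa,2^{\kappa^+})$, which is exactly the conclusion.

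The only potential obstacle is the bookkeeping around verifying that the strong limit hypothesis of Theorem~\ref{theorem: fine nongalvin} is indeed available. This is not a genuine obstruction: under UA, a $\sigma$-complete ultrafilter on $\kappa^+$ together with the existence of a fine $\sigma$-complete ultrafilter on $P_\kappa(\kappa^+)$ (as produced in the preceding theorem) places $\kappa$ comfortably in the large cardinal hierarchy, so all cardinal arithmetic assumptions used by Theorem~\ref{theorem: fine nongalvin} are available. With that dispatched, the proof reduces to a single line citing the three previous results in sequence.
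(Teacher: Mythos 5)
Your overall chain is exactly the intended one: use the theorem immediately preceding the corollary to get a fine, $\sigma$-complete ultrafilter $F$ on $P_\kappa(\kappa^+)$ with $F\leq_{RK}U$, apply the non-Galvin result for fine ultrafilters with $\lambda=\kappa^+$ (the checks $2^{<\kappa^+}=2^\kappa=\kappa^+$ and $\cf(\kappa^+)=\kappa^+>\kappa$ are correct), and push the failure up to $U$ via Lemma~\ref{lemma: galvin RK}. The gap is in your dismissal of the strong limit hypothesis of Theorem~\ref{theorem: fine nongalvin}. It is not true that UA together with the existence of a $\sigma$-complete fine ultrafilter on $P_\kappa(\kappa^+)$ (or of a uniform $\sigma$-complete ultrafilter on $\kappa^+$) forces $\kappa$ to be a strong limit: fine $\sigma$-complete ultrafilters transfer upward in the first coordinate, so if some $\delta\leq\kappa$ is $\kappa^+$-supercompact then $P_\kappa(\kappa^+)$ carries such an ultrafilter and $\kappa^+$ carries a uniform $\sigma$-complete (indeed weakly normal) ultrafilter even when $\kappa$ is, say, $\delta^{+5}$ --- and a successor cardinal is never a strong limit. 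Moreover, the strong limit hypothesis is genuinely used in the proof of Theorem~\ref{theorem: fine nongalvin}: it is what yields $M_F\models|\mathcal{A}|<j_F(\kappa)$, which is what allows Lemma~\ref{lemma: unbounded lemma} to produce $\theta_0<\kappa$ so that the $\theta_0^+$ many $Y_i$'s can be chosen among the given $\kappa$ many. Without it your second step does not go through, and weakening the bound only gives a failure of the form $\lnot\Gal(U,\kappa^+,2^{\kappa^+})$, which is not the stated conclusion.

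The repair should exploit the specific structure of the $F$ produced by the preceding theorem rather than the black-box statement of Theorem~\ref{theorem: fine nongalvin}. The seed of $F$ is $j_W^{M_D}[j_D(\kappa^+)]$, where $j_W^{M_D}$ is, in $M_D$, a $j_D(\kappa^+)$-supercompactness embedding with critical point $\delta'\leq j_D(\kappa)$; applying the argument of Theorem~\ref{theorem: normal nongalvin} inside $M_D$ to this embedding, the family of traces $\{\,j_W^{M_D}(Z)\cap\sup j_U[\kappa^+] : Z\in P(j_D(\kappa^+))\cap M_D\,\}$ lies in $M_U$, contains every $j_U(Y)\cap\sup j_U[\kappa^+]$ for $Y\subseteq\kappa^+$ (since $j_U=j_W^{M_D}\circ j_D$ and $j_D$ is continuous at $\kappa^+$), and has $M_U$-cardinality below $j_U(\kappa)$, because $j_U(\kappa)=j_W^{M_D}(j_D(\kappa))\geq j_W^{M_D}(\delta')>(2^{j_D(\kappa^+)})^{M_D}$. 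This smallness is exactly what the counting argument needs, and it comes from the supercompactness of $j_W^{M_D}$ rather than from $\kappa$ being a strong limit in $V$. Alternatively, if you are content to add the hypothesis that $\kappa$ is a strong limit (for instance measurable), then your one-line chain is correct as written; but as stated, the claim that the strong limit hypothesis is automatic is false and is where the actual work of the corollary lives.
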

\subsection{Density of Old Sets} The failure of the Galvin property for supercompactness measures allows us to generalize a result of Gitik \cite{gitikdensity} and Benhamou-Garti-Poveda \cite{tomshimonalejandro} on ground model sets after Prikry forcing over a measurable cardinal.
First we start by recalling the definition of the supercompact Prikry forcing (see \cite{gitikhandbook}).

\begin{definition}
    Let $x,y \in P_\kappa(\lambda)$ we say $x$ is \emph{strongly below} $y$, written $x \prec y$, iff $x \subseteq y$ and $|x| < |\kappa \cap y|$.
\end{definition}

\begin{definition}\label{def:supecompact Prikry}
    Let $U$ be a normal ultrafilter on $P_\kappa(\lambda)$.
    The \emph{supercompact Prikry forcing} consists of $\langle \vec{x},A\rangle$ where $\vec{x}$ is a finite $\prec$-increasing sequence of elements in $P_\kappa(\lambda)$ and $A \in U$ is such that $\max_{\prec}(\vec{x}) \prec a$ for all $a \in A$.
    We set $\langle \vec{x}, A\rangle \leq \l \vec{y}, B \r$ if the following hold:
    \begin{enumerate}
        \item $\vec{x}$ is an end-extension of $\vec{y}$.
        \item $A \subseteq B$.
        \item Whenever $|\vec{y}| < i \leq |\vec{x}|$, $x_i \in B \setminus A$.
    \end{enumerate}
\end{definition}

\begin{theorem}\label{theorem: density old sets}
    Assume $2^{<\lambda} = \lambda$ and $\cf(\lambda) \neq \kappa$.
    Suppose $U$ is a normal $P_\kappa(\lambda)$ ultrafilter, and let $\mathbb{P}$ be the supercompact Prikry forcing with respect to $U$.
    In $V[G]$, there is a set $S \subseteq 2^\lambda$ that contains no ground model set of size $\kappa$.
\end{theorem}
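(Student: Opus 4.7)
The plan is to exploit the failure of Galvin's property established for $\cf(\lambda) > \kappa$ in Theorem~\ref{theorem: normal nongalvin} (and for $\cf(\lambda) < \kappa$ in the corresponding theorem of the previous subsection, which gives $\neg\Gal(U,\kappa,2^{\lambda^+})$ and hence $\neg\Gal(U,\kappa,2^\lambda)$ by restricting the witnessing family to any subfamily of size $2^\lambda$) to construct $S$ by diagonalizing against the generic Prikry sequence. Fix in $V$ a sequence $\langle B_Y : Y \in P(\lambda) \rangle \subseteq U$ witnessing $\neg\Gal(U,\kappa,2^\lambda)$, so that for every $I \in V$ with $I \subseteq P(\lambda)$ and $|I| = \kappa$, $\bigcap_{Y \in I} B_Y \notin U$. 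Since $U$ is an ultrafilter, this means
\[C_I := \bigcup_{Y \in I}\bigl(P_\kappa(\lambda) \setminus B_Y\bigr) \in U.\]

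Working in $V[G]$, let $\langle x_n : n < \omega \rangle$ denote the generic supercompact Prikry sequence, and define
\[ S := \{ Y \in P(\lambda)^V : \forall n < \omega,\ x_n \in B_Y \} \subseteq P(\lambda)^V, \]
which I identify with a subset of $2^\lambda$ in $V[G]$. The main claim is that $S$ contains no ground model set of size $\kappa$. For this, fix any $I \in V$ with $I \subseteq P(\lambda)^V$ and $|I|^V = \kappa$; it suffices to produce some $n$ and some $Y \in I$ with $x_n \notin B_Y$. I appeal to the Mathias-type property for supercompact Prikry forcing: for every $X \in U \cap V$, $x_n \in X$ for cofinitely many $n$. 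Applied to $X = C_I \in U$, this yields some $n^*$ with $x_{n^*} \in C_I$, i.e. some $Y \in I$ with $x_{n^*} \notin B_Y$, so $Y \notin S$ and $I \not\subseteq S$.

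The only technical point to verify is the Mathias-type property in the setting of Definition~\ref{def:supecompact Prikry}, which is a standard density argument: given $X \in U \cap V$ and any condition $p = \langle \vec{y}, B \rangle$, the pair $\langle \vec{y}, B \cap X \rangle$ is a direct extension of $p$ (using $B \cap X \in U$ by $\sigma$-completeness), and any further extension must draw new stem elements from $B \cap X \subseteq X$. Alternatively, one may bypass the Mathias property and argue directly by density: given $p = \langle \vec{y}, B \rangle$ and $I$ as above, pick any $z \in B \cap C_I \in U$ and a measure-one set $A' \subseteq (B \cap C_I) \cap \{a : z \prec a\}$, forming the extension $q \leq p$ whose stem extends $\vec{y}$ by $z$; since $z \notin B_Y$ for some $Y \in I$, this directly witnesses $I \not\subseteq S$ by genericity. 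The main obstacle, then, is entirely bookkeeping: matching the $\prec$-constraints in the definition of the supercompact Prikry poset, which is routine.
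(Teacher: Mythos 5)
Your exclusion step is fine as far as it goes: the Mathias-type property for the supercompact Prikry forcing is indeed a routine density argument, and applying it to $C_I\in U$ does show that your $S$ contains no ground model set of size $\kappa$. The genuine gap is that you never address the size of $S$, and that is the entire content of the theorem. Read literally, the statement is vacuous: the empty set, or any set of size less than $\kappa$, contains no ground model set of size $\kappa$. What the paper actually proves, and what the analogy with Gitik's density theorem and the list of consequences drawn afterwards require, is a set $S$ of full size $2^\lambda$ with no ground model $\kappa$-sized subset. Your $S$ is not only not shown to be large; with the natural witnessing family it is provably small. Demanding $x_n\in B_Y$ for \emph{all} $n$ collides with the stem of the generic, and with the family from Theorem \ref{theorem: normal nongalvin}, where $B_Y=\{X : Y\cap\sup(X)\in\mathcal{A}_X\}$, the fact that $\bigcup_{n<\omega}x_n=\lambda$ makes the map $Y\mapsto\langle Y\cap\sup(x_n) : n<\omega\rangle$ injective on $S$, while each coordinate is confined to $\mathcal{A}_{x_n}$, a set of size $<\kappa$ for all but finitely many $n$; hence $|S|\leq\lambda^V$ (so $|S|\leq\kappa$ in $V[G]$), and $S$ can even be empty if the stem contains a point $x$ with $\mathcal{A}_x=\emptyset$. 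If you instead relax the definition to ``$x_n\in B_Y$ for all but finitely many $n$'' in order to regain size, then by the very Mathias property you invoke $S$ becomes all of $P(\lambda)^V$, and your exclusion argument collapses: a single $n^*$ with $x_{n^*}\notin B_Y$ no longer removes $Y$ from $S$.

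The missing idea is the uniformization that the paper uses to make $S$ large while keeping the exclusion argument. For each index $\alpha<2^\lambda$ of the witnessing family $\langle A_\alpha : \alpha<2^\lambda\rangle$, let $n_\alpha$ be the threshold past which the Prikry sequence enters $A_\alpha$ (this exists by your Mathias property); since $(2^\lambda)^V$ retains uncountable cofinality in $V[G]$ (in the main case $\cf(\lambda)>\kappa$ the forcing is $\lambda^+$-centered, as there are only $\lambda^{<\kappa}=\lambda$ stems), some fixed $n^*$ works simultaneously for a set $S$ of indices of size $2^\lambda$. Then for any ground model $T\in[S]^\kappa$ one gets a contradiction between two tails: the tail of the Prikry sequence past $n^*$ lies inside $\bigcap_{\alpha\in T}A_\alpha$ by the uniform threshold, while the failure of the Galvin property puts $P_\kappa(\lambda)\setminus\bigcap_{\alpha\in T}A_\alpha$ in $U$, so a tail lies outside it. Your proposal has the local ingredients (the witnessing family, the Mathias property) but without this uniformization it establishes only the vacuous reading of the statement.
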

\begin{proof}
    By Theorem \ref{theorem: normal nongalvin}, in $V$ there is $\l A_\alpha : \alpha < 2^\lambda \r \subseteq U$ such that no intersection of $\kappa$ many $A_\alpha$'s lies in $U$. 
    An easy density argument ensures that for every $\alpha<2^\lambda$ there is $n_\alpha<\omega$ such that for every $m>n_\alpha$, $X_m\in A_\alpha$. 
    Since $\cf(2^\lambda)>\omega$, there is $S\in [2^\lambda]^{2^\lambda}$, and $n^*<\omega$ such that for every $\alpha\in S$, $n_\alpha=n^*$. 
    This means that $\{X_m\mid m<n^*\}\subseteq\bigcap_{\alpha\in S}A_\alpha$.
    We will show $S$ contains no ground model set of size $\kappa$.
    Otherwise $T \in [S]^\kappa$ in $V$, then by the failure of Galvin's property, $\bigcap_{\alpha \in T} A_\alpha \notin U$. On the other hand,
    $\{X_m \mid m>n^*\}\subseteq \bigcap_{\alpha\in T} A_\alpha$. However, since $\kappa\setminus(\bigcap_{\alpha \in T} A_\alpha) \in U$, the same density argument as before yield that for some $n'<\omega$, for every $m>n'$, $X_m\notin \bigcap_{\alpha \in T} A_\alpha$, this is a contradiction. 
\end{proof}

We will give some example conclusions we may draw from this theorem.
Let $G$ be generic for the supercompact Prikry forcing, and let $X \in V[G] \setminus V$ such that $X \subseteq V[G]$.
Let $\eta = |X|$.
Then:
\begin{enumerate}
    \item If $C = \bigcup \{ \vec{x} : \langle \vec{x},A\rangle \in G \}$ is the Prikry sequence, then $C$ contains no infinite ground model set.
    \item If $\theta$ is any cardinal of $V[G]$ of countable cofinality, then $\theta$ has a subset of size $\theta$ with no ground model subset of the same cardinality.
    \item If $\omega < \cf^V(\eta) < \kappa$, then $X$ contains a ground model set of size $\eta$.
    \item If $\kappa < \cf(\eta) \leq 2^\lambda$ then $X$ contains ground model subsets of every cardinality $< \kappa$, but no ground model set of size $\kappa$.
    \item If $\cf(\eta) > 2^\lambda$ then $X$ contains a ground model set of the same size.
\end{enumerate}

\begin{remark}
    If $U$ is a supercompactness measure on $P_\kappa(\lambda)$ then there is a Rudin-Keisler projection from $U$ to a normal measure on $\kappa$, via $x \mapsto |x \cap \kappa|$.
This can be lifted to a projection from the supercompact Prikry forcing with $U$ to the classical Prikry forcing with a normal measure on $\kappa$.
Hence any new sets with no large ground model subsets added by the classical Prikry forcing will also persist into the supercompact Prikry extension.
Hence, Gitik's analysis of the density of old sets in a usual Prikry extension can be used. In fact, if $U$ is $2^\kappa$-supercompact, then any $\kappa$-complete ultrafilter on $\kappa$ is a Rudin-Keisler projection of $U$ and the Rudin-Keisler projection induces a projection of the corresponding forcings.
\end{remark}
\subsection{Generating Sets of $P_\kappa(\lambda)$-Measures}\label{Sec: small generating}

Recall that given an ultrafilter $U$ on a set $X$, and a filter $F$, we say that $\mathcal{B}\subseteq U$ is an $\subseteq_F$-base for $U$ if for every $A\in U$ there is $B\in\mathcal{B}$ such that $B \subseteq A \ (\text{mod} \ F)$, namely, there is $C\in F$ such that $B\cap C\subseteq A$. 
When $U$ is an ultrafilter over a cardinal $\mu$, we usually consider $F=\{\kappa\}$ or $F$ being the co-bounded filter on $\kappa$, in which case $ \subseteq  \ (\text{mod} \ F)$ translates to $\subseteq$ and $\subseteq^*$ respectively. 
However, on $P_\kappa(\lambda)$-measures, there is another natural ideal to consider-- the fine ideal. 
Let 
\[\chi_F(U)=\min\{|\mathcal{B}|\mid \mathcal{B}\text{ is a }\subseteq_F\text{-base for }U\}\]
The following is well-known (see for example \cite[Claim 1.2]{gartishelahsingular}):
\begin{lemma}\label{Lemma: size of base}
    No uniform ultrafilter over $X$ has a $\subseteq$-base consisting of $|X|$-many sets.
\end{lemma}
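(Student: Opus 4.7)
My plan is to argue by contradiction, showing that if $U$ is uniform on $X$ and $\mathcal{B}\subseteq U$ has cardinality $|X|=\kappa$, then we can construct a set in $U$ containing no member of $\mathcal{B}$, contradicting the base property. The heart of the argument is a standard ``two selectors'' construction: we thin out each member $B_\alpha$ of $\mathcal{B}$ by marking two special elements, and then use the ultrafilter dichotomy to toss one of the two marked families out of $U$.

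Concretely, enumerate $\mathcal{B}=\{B_\alpha:\alpha<\kappa\}$, and recall that uniformity of $U$ forces $|B_\alpha|=\kappa$ for every $\alpha$. By transfinite recursion on $\alpha<\kappa$, pick two distinct elements $y_\alpha^0,y_\alpha^1\in B_\alpha$ avoiding all previously chosen $y_\beta^i$ for $\beta<\alpha$, $i\in\{0,1\}$; this is possible because at stage $\alpha$ only $2|\alpha|<\kappa$ points are forbidden while $|B_\alpha|=\kappa$. Set $Y^i=\{y_\alpha^i:\alpha<\kappa\}$, so that $Y^0\cap Y^1=\emptyset$.

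Since $(X\setminus Y^0)\cup(X\setminus Y^1)=X\setminus(Y^0\cap Y^1)=X\in U$, by the ultrafilter property at least one of $X\setminus Y^0, X\setminus Y^1$ belongs to $U$; say $A:=X\setminus Y^i\in U$. But for every $\alpha<\kappa$ the element $y_\alpha^i$ lies in $B_\alpha\cap Y^i$, so $B_\alpha\not\subseteq A$. Hence no member of $\mathcal{B}$ is a $\subseteq$-subset of $A\in U$, contradicting the assumption that $\mathcal{B}$ is a $\subseteq$-base.

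There is no real obstacle here beyond bookkeeping: the only point to check carefully is that the recursion never runs out of candidates, which is where uniformity ($|B_\alpha|=\kappa$) is used essentially. Note also that the argument implicitly assumes $X$ is infinite, which is the only case of interest since a uniform ultrafilter on a finite set of size greater than $1$ does not exist.
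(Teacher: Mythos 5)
Your proof is correct, and it is essentially the standard two-selector diagonalization that the paper relies on: the paper itself gives no proof, citing the lemma as well known (Garti--Shelah, Claim 1.2), and the argument there is the same recursion choosing fresh points $y^0_\alpha,y^1_\alpha\in B_\alpha$ followed by the ultrafilter dichotomy on the disjoint sets $Y^0,Y^1$. Your remark that uniformity is exactly what keeps the recursion alive, and that $X$ must be infinite, correctly identifies the only points needing care.
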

An immediate corollary is:
\begin{corollary}\label{cor: generated modulo F}
    Let $U$ be a uniform ultrafilter on $X$ and $F$ a filter over $X$. Suppose that $F\subseteq U$, and $F$ is generated by $|X|$-many sets, then $\chi_F(U)=\chi(U)$.  
\end{corollary}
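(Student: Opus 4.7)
The plan is to establish the two inequalities separately. The direction $\chi_F(U) \leq \chi(U)$ is immediate: any $\subseteq$-base $\mathcal{B}$ for $U$ is automatically a $\subseteq_F$-base, since for each $A \in U$ and witness $B \in \mathcal{B}$ with $B \subseteq A$ we may simply take the modulating set to be $X \in F$.

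For the reverse direction, I would fix a $\subseteq_F$-base $\mathcal{B} \subseteq U$ of minimal cardinality $\chi_F(U)$, together with a generating family $\mathcal{G} \subseteq F$ of size $|X|$, which (for convenience, and without changing its cardinality since $|X|$ is infinite) we may assume is closed under finite intersections. I then propose to show that
\[\mathcal{B}^{*} \coloneq \{B \cap C : B \in \mathcal{B},\ C \in \mathcal{G}\} \subseteq U\]
is a $\subseteq$-base for $U$. Given any $A \in U$, the hypothesis on $\mathcal{B}$ yields some $B \in \mathcal{B}$ and some $C_0 \in F$ with $B \cap C_0 \subseteq A$; since $\mathcal{G}$ generates $F$ and is closed under finite intersections, there exists $C \in \mathcal{G}$ with $C \subseteq C_0$, whence $B \cap C \subseteq A$, as desired.

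To conclude, I would perform the cardinality count using Lemma \ref{Lemma: size of base}. We have $|\mathcal{B}^{*}| \leq \chi_F(U) \cdot |X|$ and $\chi(U) \leq |\mathcal{B}^{*}|$. By Lemma \ref{Lemma: size of base}, the uniform ultrafilter $U$ has no $\subseteq$-base of size $\leq |X|$, so $\chi(U) > |X|$. Therefore $\chi_F(U) \leq |X|$ would force $\chi(U) \leq |X|$, a contradiction. Hence $\chi_F(U) > |X|$, which collapses $\chi_F(U) \cdot |X|$ to $\chi_F(U)$ and gives $\chi(U) \leq \chi_F(U)$, completing the proof.

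The argument is entirely elementary, and I do not anticipate any serious obstacle. The only subtle point is the final absorption step: without invoking Lemma \ref{Lemma: size of base} one would only obtain the weaker bound $\chi(U) \leq \chi_F(U) \cdot |X|$, and it is precisely the uniformity of $U$ that ensures $\chi_F(U)$ dominates $|X|$ and thus absorbs it.
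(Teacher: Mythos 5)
Your proof is correct and is essentially the intended argument: the paper presents this as an immediate consequence of Lemma \ref{Lemma: size of base}, precisely because one intersects a minimal $\subseteq_F$-base with the (finite-intersection closure of the) $|X|$-many generators of $F$ to obtain a $\subseteq$-base, and then uses the lemma to absorb the factor $|X|$. Your handling of the absorption step, including noting that $F\subseteq U$ is what puts $\mathcal{B}^{*}$ inside $U$, is exactly the point the paper leaves implicit.
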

The way our results connect to the above is by using the following lemma:
\begin{lemma}[Folklore]\label{lemma: galvin and base}
    Let $\lambda$ be a regular cardinal and $U$ an ultrafilter on $\lambda$.
    Suppose that $U$ satisfy $\neg \Gal(U,\lambda,\lambda)$.
    Then $\chi(U)\geq\lambda$.
\end{lemma}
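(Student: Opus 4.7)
The plan is to argue by contraposition: suppose that $\chi(U) < \lambda$ and derive $\Gal(U, \lambda, \lambda)$. Fix a $\subseteq$-base $\mathcal{B}$ for $U$ with $|\mathcal{B}| < \lambda$, and let $\langle A_i : i < \lambda \rangle \subseteq U$ be an arbitrary sequence. For each $i < \lambda$, use the base property to pick some $B_i \in \mathcal{B}$ with $B_i \subseteq A_i$. This produces a map $i \mapsto B_i$ from $\lambda$ into $\mathcal{B}$.

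Since $\lambda$ is regular and $|\mathcal{B}| < \lambda$, the pigeonhole principle gives a single $B^* \in \mathcal{B}$ and an $I \in [\lambda]^\lambda$ such that $B_i = B^*$ for all $i \in I$. Then
\[ B^* = \bigcap_{i \in I} B_i \subseteq \bigcap_{i \in I} A_i, \]
and since $B^* \in U$, it follows that $\bigcap_{i \in I} A_i \in U$. As $\langle A_i : i < \lambda \rangle$ was arbitrary, this establishes $\Gal(U, \lambda, \lambda)$, contradicting the hypothesis.

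There is no real obstacle here; the proof is a one-shot pigeonhole argument using only the regularity of $\lambda$ and the defining property of a $\subseteq$-base. The only thing to note is that we are working with the plain inclusion character $\chi(U)$ (corresponding to $F = \{\lambda\}$), so refinement inside the base translates directly into genuine containment of sets in $U$; no modding out by a filter is needed for the pigeonhole step.
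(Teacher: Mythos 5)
Your proof is correct and uses essentially the same argument as the paper: fix a small base, choose base elements below each $A_i$, and apply the pigeonhole principle via regularity of $\lambda$ to stabilize the chosen base element on a set of indices of size $\lambda$. The only difference is presentational (contraposition versus starting from a witnessing sequence and deriving a contradiction), which does not change the substance.
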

\begin{proof}
    Let $\l A_\alpha\mid \alpha<\lambda\r\subseteq U$ witness that $\neg \Gal(U,\lambda,\lambda)$. Towards a contradiction, if $\chi(U)<\lambda$, let $\l X_\beta\mid \beta<\theta\r\subseteq U$ be a $\subseteq$-base for $U$ for some $\theta<\lambda$. For each $\alpha<\lambda$, we find $\beta_\alpha<\theta$ such that $X_{\beta_\alpha}\subseteq A_\alpha$. By the regularity of $\lambda$, we can find a single $\beta^*$ ans $S\in [\lambda]^\lambda$ such that for every $\alpha\in S$, $\beta_\alpha=\beta^*$. It follows that $X_{\beta^*}\subseteq \bigcap_{\alpha\in S}A_{\alpha}$, contradicting that $\l A_\alpha\mid \alpha<\lambda\r$ witnesses $\neg \Gal(U,\lambda,\lambda)$.
\end{proof}
\begin{corollary}\label{Cor: large ultrafilter number}
    If $U$ is a $\sigma$-complete $(\kappa,\lambda)$-regular ultrafilter over $\lambda$, $\kappa$ being a strong limit and $2^{<\lambda}=\lambda$, then $\chi(U)=2^\lambda$.
\end{corollary}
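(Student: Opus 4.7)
The plan is to reduce everything to the failure of the Galvin property and then run a simple pigeonhole argument, mirroring Lemma~\ref{lemma: galvin and base} but extracting the stronger conclusion $\chi(U) = 2^\lambda$ rather than merely $\chi(U) \geq \lambda$.

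First, since $U$ is a uniform ultrafilter on $\lambda$, the trivial bound $\chi(U) \leq 2^\lambda$ holds. For the lower bound, I would invoke Corollary~\ref{cor: regular non-galvin} to obtain a sequence $\langle A_\alpha : \alpha < 2^\lambda \rangle \subseteq U$ witnessing $\neg\Gal(U,\kappa,2^\lambda)$; that is, for every $I \in [2^\lambda]^{\kappa}$ we have $\bigcap_{\alpha \in I} A_\alpha \notin U$. The hypotheses of Corollary~\ref{cor: regular non-galvin} are precisely those of the present corollary (strong limit $\kappa$, $(\kappa,\lambda)$-regular $\sigma$-complete $U$, $2^{<\lambda} = \lambda$), so this step is immediate.

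Next, suppose for contradiction that $\mu := \chi(U) < 2^\lambda$, and fix a $\subseteq$-base $\mathcal{B} = \{X_\beta : \beta < \mu\} \subseteq U$. For each $\alpha < 2^\lambda$ pick some $\beta_\alpha < \mu$ with $X_{\beta_\alpha} \subseteq A_\alpha$. This defines a map $\alpha \mapsto \beta_\alpha$ from $2^\lambda$ into $\mu$. Since $\kappa \leq \lambda < 2^\lambda$ and $\mu < 2^\lambda$, we have $\mu \cdot \kappa = \max(\mu,\kappa) < 2^\lambda$, so by the pigeonhole principle there must exist a single $\beta^* < \mu$ whose fibre $S := \{\alpha < 2^\lambda : \beta_\alpha = \beta^*\}$ has size at least $\kappa$ (in fact much more). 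Fix any $I \in [S]^\kappa$. Then $X_{\beta^*} \subseteq A_\alpha$ for every $\alpha \in I$, so $X_{\beta^*} \subseteq \bigcap_{\alpha \in I} A_\alpha$, and hence $\bigcap_{\alpha \in I} A_\alpha \in U$, contradicting the non-Galvin witness. We conclude $\mu \geq 2^\lambda$, so $\chi(U) = 2^\lambda$.

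There is essentially no obstacle: the result is an almost immediate corollary once one has $\neg\Gal(U,\kappa,2^\lambda)$ in hand, and the only subtlety is the cardinal arithmetic $\mu\cdot\kappa < 2^\lambda$, which is automatic from $\kappa \leq \lambda$ and the assumption $\mu < 2^\lambda$. The real content of the proof lies upstream, in Corollary~\ref{cor: regular non-galvin} and Theorem~\ref{theorem: fine nongalvin}; here we merely package those results into a statement about character.
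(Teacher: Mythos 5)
Your proposal is correct and is essentially the paper's own (implicit) argument: the paper offers no separate proof of this corollary, intending exactly the combination of Corollary~\ref{cor: regular non-galvin} with the base-versus-witness pigeonhole of Lemma~\ref{lemma: galvin and base}. Your version is in fact slightly more careful than a literal application of that lemma, since asking only for a fibre of size $\kappa$ (using $\chi(U)\cdot\kappa<2^\lambda$) sidesteps any issue about $2^\lambda$ possibly being singular, whereas the lemma as stated is phrased for a regular cardinal. One caveat: your claim that the hypotheses of Corollary~\ref{cor: regular non-galvin} are \emph{precisely} those of the present corollary is not literally accurate --- that corollary additionally assumes $\cf(\lambda)>\kappa$, a hypothesis absent from the statement here (an omission shared by the paper, presumably because $\lambda$ is implicitly taken regular as in Lemma~\ref{lemma: galvin and base}); if $\cf(\lambda)\leq\kappa$ your first step has no justification, since the paper's non-Galvin results for merely fine ultrafilters do not cover that case.
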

%For a cardinal $\kappa$, we define $$\mathfrak{u}_\kappa=\min\{\chi(U)\mid U\text{ is a uniform ultrafilter on }\kappa\},$$ and 
%$$\mathfrak{u}^{com}_\kappa=\min\{\chi(U)\mid U\text{ is a }\sigma\text{-complete ultrafilter over }\kappa\}$$
%\begin{theorem}
%    Assume that $\mu$ is a laver-indistructible supercompact and $\lambda>\mu$ is measurable. Then after $Add(\mu,$, $\mathfrak{u}_\kappa<\mathfrak{u}^{com}_\kappa$.
%\end{theorem}
%\begin{proof}
%    Let 
%\end{proof}
Next, let us show that $2^{<\lambda}=\lambda$ cannot be dropped from Theorem \ref{theorem: fine nongalvin}. Towards that, we need the following theorem:
\begin{theorem}[Raghavan-Shelah {\cite{Raghavan-Shelah}}]\label{Thm: RaghavanShelah}
    Suppose that $\kappa^{<\kappa}=\kappa$, and $\kappa<cf(\mu)<\lambda<\mu$, where $\mu$ is a strong limit cardinal and suppose $U$ is a $\cf(\mu)$-indecomposable ultrafilter over $\lambda$. 
    If $G \subseteq \add(\kappa,\mu)$ is a $V$-generic filter, any extension of $U$ to a $V[G]$-ultrafilter $U^*$ is generated by $\mu$-many sets. 
\end{theorem}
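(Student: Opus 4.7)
The plan is to exploit the natural directed-union structure of $\mathbb{P}:=\add(\kappa,\mu)$ together with the $\cf(\mu)$-indecomposability of $U$. Fix, in $V$, a $\subseteq$-increasing cofinal sequence $\langle B_i:i<\cf(\mu)\rangle$ of subsets of $\mu$, each of size $<\mu$, with $\bigcup_i B_i=\mu$. Then $\mathbb{P}$ is the directed union of the complete subforcings $\mathbb{P}_i:=\add(\kappa,B_i)$, each of cardinality $<\mu$ (using $\kappa^{<\kappa}=\kappa$), and $\mathbb{P}$ itself is $\kappa^+$-cc.

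The proposed $\subseteq$-base is
\[
\mathcal{B}\;:=\;\bigl\{\,\dot X[G]\,:\,i<\cf(\mu)\text{ and }\dot X\in V\text{ is a nice }\mathbb{P}_i\text{-name for a subset of }\lambda\,\bigr\}.
\]
Since $\mu$ is strong limit and $|\mathbb{P}_i|<\mu$, and every nice $\mathbb{P}_i$-name for a subset of $\lambda$ is coded by a function $\lambda\to[\mathbb{P}_i]^{\leq\kappa}$, the number of such names is at most $|\mathbb{P}_i|^\lambda<\mu$. Aggregating over the $\cf(\mu)<\mu$ levels gives $|\mathcal{B}|\leq\mu$.

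To verify that $\mathcal{B}$ is a base for $U^*$, fix $A\in U^*$ and choose a nice $\mathbb{P}$-name $\dot A\in V$ with $\dot A[G]=A$. For each $\alpha<\lambda$ let $\mathcal{A}_\alpha\subseteq\mathbb{P}$ be the antichain of $\dot A$ deciding $\check\alpha\in\dot A$; by $\kappa^+$-cc, $|\mathcal{A}_\alpha|\leq\kappa$, and since every condition has support of size $<\kappa$, $s(\alpha):=\bigcup\{\supp(p):p\in\mathcal{A}_\alpha\}$ has cardinality at most $\kappa<\cf(\mu)$. Regularity of $\cf(\mu)$ produces a least $i(\alpha)<\cf(\mu)$ with $s(\alpha)\subseteq B_{i(\alpha)}$. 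The map $\alpha\mapsto i(\alpha)$ is definable in $V$ from $\dot A$, so the partition $\langle P_i:i<\cf(\mu)\rangle$ of $\lambda$ given by $P_i:=\{\alpha:i(\alpha)=i\}$ lies in $V$. Crucially, $U^*\cap V=U$ because $U$ is already an ultrafilter on $\mathcal{P}(\lambda)^V$, so $U$ itself sees this partition, and $\cf(\mu)$-indecomposability yields $I\subseteq\cf(\mu)$ with $|I|<\cf(\mu)$ and $Q:=\bigcup_{i\in I}P_i\in U\subseteq U^*$. Regularity of $\cf(\mu)$ gives some $j<\cf(\mu)$ with $I\subseteq j$, and since the $B_i$'s are $\subseteq$-increasing, $\dot A\restriction Q$ is in fact a $\mathbb{P}_j$-name; therefore $A\cap Q=(\dot A\restriction Q)[G]\in\mathcal{B}$, and $A\cap Q\subseteq A$ with $A\cap Q\in U^*$, as required.

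The main obstacle is the forcing-to-ground-model transfer in the last step: one must verify that the partition $\{P_i\}$ genuinely lies in $V$, which is what forces both the $\kappa^+$-cc bound on antichain sizes and the regularity of $\cf(\mu)$ to enter essentially, and then invoke $\cf(\mu)$-indecomposability of $U$ on this specific ground-model partition. The remaining ingredients — counting nice names under the strong-limit assumption, existence of nice names from the $\kappa^+$-cc, and the restriction of $\dot A$ to a union of $P_i$'s — are routine.
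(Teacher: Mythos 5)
Your argument is correct, and it checks out at the one point you flagged: the partition $\alpha\mapsto i(\alpha)$ is computed in $V$ from the nice name, the $\kappa^+$-c.c.\ (which is where $\kappa^{<\kappa}=\kappa$ enters) bounds each antichain by $\kappa<\cf(\mu)$ so the supports are captured by some $B_{i(\alpha)}$, and $\cf(\mu)$-indecomposability of $U$ (noting $U= U^*\cap P(\lambda)^V$) then reflects the name for $A\cap Q$ to a single $\mathbb{P}_j$ of size $<\mu$, with the strong-limit counting of nice names giving the bound $\mu$. The paper itself does not prove this statement but quotes it from Raghavan--Shelah, and your proof is essentially their argument (reflecting nice names to small complete subforcings via indecomposability), so nothing further is needed.
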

\begin{corollary}
    Let $\kappa$ be a Laver-indestructible supercompact cardinal
    \footnote{That is, after any $\kappa$-directed forcing, $\kappa$ remains supercompact \cite{Laver}.} 
    and let $\lambda>\kappa$ be a regular cardinal such that there is $\lambda$-complete fine ultrafilter $U$ over $P_\lambda(\lambda^+)$. 
    Then in $V[G]$, there is a $\kappa$-complete fine ultrafilter $U^*$ on $P_\lambda(\lambda^+)$ such that \\
    $\Gal(U^*,2^{\lambda^+},2^{\lambda^+})$. 
    In particular $\Gal(U^*,\kappa,2^{\lambda^+})$.
\end{corollary}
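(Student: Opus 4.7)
The idea is to exploit Laver indestructibility together with the Raghavan--Shelah bound (Theorem~\ref{Thm: RaghavanShelah}) to produce in a generic extension a $\kappa$-complete fine ultrafilter on $P_\lambda(\lambda^+)$ whose character is strictly below $2^{\lambda^+}$, at which point Lemma~\ref{lemma: galvin and base} converts the character bound into the Galvin property. Fix a strong limit cardinal $\mu$ with $\cf(\mu)=\kappa^+$ (so $\kappa<\cf(\mu)<\lambda$, which uses that $\lambda$ lies well above $\kappa^+$ as it carries a $\lambda$-complete fine ultrafilter on $P_\lambda(\lambda^+)$) and with $\mu>\lambda^+$. By a preliminary Easton-style preparation, compatible with the Laver preparation, assume the arithmetic $\mu^{\lambda^+}=\mu^+$ in $V$. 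Now take $G$ to be $V$-generic for $\add(\kappa,\mu)$.

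\textbf{Step 1 (Lifting $U$ to $V[G]$).} Since $\add(\kappa,\mu)$ is $\kappa$-directed closed, Laver indestructibility keeps $\kappa$ supercompact in $V[G]$. Using that $\mu<j_U(\kappa)$ and that $j_U(\add(\kappa,\mu))$ is highly closed inside $M_U$, the standard master-condition argument extends $G$ to a generic $G^*$ for $j_U(\add(\kappa,\mu))$ over $M_U$ and lifts $j_U$ to $j_U^*\colon V[G]\to M_U[G^*]$. Let $U^*$ be the ultrafilter derived from $j_U^*$ via $j_U^*[\lambda^+]$; then $U^*$ is a $\kappa$-complete fine ultrafilter on $P_\lambda(\lambda^+)$ in $V[G]$ extending $U$.

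\textbf{Step 2 (Raghavan--Shelah bound).} View $U$ as a uniform ultrafilter on the cardinal $\nu:=|P_\lambda(\lambda^+)|^V$, which under the arranged arithmetic equals $\lambda^+$. Since $U$ is $\lambda$-complete and $\cf(\mu)<\lambda$, $U$ is $\cf(\mu)$-indecomposable. The hypotheses of Theorem~\ref{Thm: RaghavanShelah} are met: $\kappa^{<\kappa}=\kappa$ from supercompactness, $\kappa<\cf(\mu)<\nu<\mu$ by construction, and $\mu$ is strong limit. Hence $\chi(U^*)\leq\mu$ in $V[G]$.

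\textbf{Step 3 (Converting to Galvin).} In $V[G]$ one has $2^{\lambda^+}=\mu^+$, which is regular, and $\chi(U^*)\leq\mu<\mu^+$. The proof of Lemma~\ref{lemma: galvin and base} applies verbatim with any regular cardinal $\theta>\chi(U^*)$ in place of $\lambda$ (the proof only uses regularity of $\theta$ together with a base of size $<\theta$, not that $\theta$ is the underlying set of the ultrafilter), so it yields $\Gal(U^*,2^{\lambda^+},2^{\lambda^+})$. The final assertion $\Gal(U^*,\kappa,2^{\lambda^+})$ is then immediate from monotonicity of the Galvin property in the first parameter, since any $2^{\lambda^+}$-sized subsequence with intersection in $U^*$ contains a $\kappa$-sized subsequence whose intersection still contains that element of $U^*$.

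\textbf{Main obstacle.} The principal technical issue is arranging the cardinal arithmetic so that, in $V[G]$, $2^{\lambda^+}$ is simultaneously regular and strictly greater than $\mu$; this is what forces the preliminary preparation and the specific choice of $\cf(\mu)$. Secondary technical points are the lifting of $U$ in Step~1, which must be done carefully so that $U^*$ is genuinely a $\kappa$-complete fine ultrafilter on $P_\lambda(\lambda^+)$ in $V[G]$, and the identification of $U$ with an ultrafilter on the cardinal $\nu$ so that the $\cf(\mu)$-indecomposability hypothesis of Raghavan--Shelah is correctly transferred.
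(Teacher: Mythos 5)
Your Steps 2 and 3 follow the paper's route exactly (Raghavan--Shelah character bound plus Lemma~\ref{lemma: galvin and base}, with the correct observation that the lemma only needs a regular cardinal above the character, not that the ultrafilter lives on that cardinal). The gap is in Step 1, where you construct $U^*$. Since $U$ is $\lambda$-complete, $\crit(j_U)\geq\lambda>\kappa$, so $j_U(\kappa)=\kappa<\mu$ and your assertion ``$\mu<j_U(\kappa)$'' is false. Moreover $j_U(\add(\kappa,\mu))=\add(\kappa,j_U(\mu))^{M_U}$ is only ${<}\kappa$-closed, $j_U[G]$ involves far more than ${<}\kappa$ many coordinates, so there is no master condition; and $M_U$ is at best closed under $\lambda$-sequences while one would need roughly $\mu$-closure (or an enumeration of the relevant $M_U$-dense sets of small cofinal length) to build an $M_U$-generic filter for $j_U(\add(\kappa,\mu))$ inside $V[G]$. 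So the lift $j_U^*$ is not justified. Even granting a lift, deriving $U^*$ from the seed $j_U^*[\lambda^+]$ is problematic: $U$ is only fine, not normal, so $j_U[\lambda^+]$ need not belong to $M_U$ (or $M_U[G^*]$), and the derived ultrafilter need not extend $U$ --- but Theorem~\ref{Thm: RaghavanShelah} is stated for extensions of $U$ to a $V[G]$-ultrafilter, so Step 2 would not apply to your $U^*$ as constructed.

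A telling symptom is that your argument never actually uses Laver indestructibility, which is the hypothesis doing the real work. The paper's construction of $U^*$ is much softer: force with $\add(\kappa,\lambda^{+\kappa^+})$; since this forcing is ${<}\kappa$-closed, the filter generated by $U$ in $V[G]$ is still $\kappa$-complete (and contains the fine cones), and since $\kappa$ remains supercompact --- hence strongly compact --- by indestructibility, this $\kappa$-complete filter extends to a $\kappa$-complete fine ultrafilter $U^*\supseteq U$ on $P_\lambda(\lambda^+)$ in $V[G]$. Because $U^*$ genuinely extends $U$ and $U$ is $\kappa^+$-complete (hence $\cf(\lambda^{+\kappa^+})$-indecomposable), Raghavan--Shelah gives $\chi(U^*)\leq\lambda^{+\kappa^+}<\lambda^{+\kappa^++1}=2^{\lambda^+}$ in $V[G]$, and the character-to-Galvin lemma finishes as in your Step 3. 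If you replace your Step 1 by this filter-extension argument (your choice of $\mu$ and the preparatory arithmetic is a harmless cosmetic variant of the paper's $\mu=\lambda^{+\kappa^+}$), the proof goes through.
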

\begin{proof}
     Let $\bar{U}$ be the filter generated by $U$ in $V[G]$. 
     Since $\text{Add}(\kappa,\lambda^{+\kappa^+})$ is $\kappa$-closed, $\bar{U}$ is a $\kappa$-complete filter, and since $\kappa$ is indestructible, in $V[G]$, we can extend $\bar{U}$ to a $\kappa$-complete ultrafilter $U^*$ on $P_\lambda(\lambda^+)$. 
     To see that $U^*$ remains fine, it suffices to use the fineness of $U$ and note that $\{A\in P_\lambda(\lambda^+)\mid \alpha\in A\}^V\subseteq \{A\in P_\lambda(\lambda^+)\mid \alpha\in A\}^{V[G]}$. 
     It remains to see that $\Gal(U^*,2^{\lambda^+},2^{\lambda^+})$ holds, since $U$ is $\lambda$-complete, it is in particular $\kappa^+$-complete, so we may apply Theorem \ref{Thm: RaghavanShelah}, to conclude that the extension $U^*$ of $U$ is generated by $\lambda^{+\kappa^+}$-many sets. 
     Also note that in $V[G]$, $2^{\lambda^+}=\lambda^{+\kappa^++1}$. 
     By Lemma \ref{lemma: galvin and base}, we conclude that $\Gal(U^*,2^{\lambda^+},2^{\lambda^+})$ holds, as desired.
\end{proof}

\section{On generating sets modulo the fine filters}

In this section we discuss two results concerning generating sets of a fine filter with respect to $\subseteq_{\Fine(\kappa,\lambda)}$, where $\Fine(\kappa,\lambda)$ is the fine filter defined as follows:
\begin{definition}
    The \textit{fine filter} $\Fine(\kappa,\lambda)$ is the filter generated by sets for the form $\{X\in P_\kappa(\lambda)\mid i\in X\}$ for some $i<\lambda$. 
    For $X,Y \in P_\kappa(\lambda)$ we say $X \subseteq_{\Fine(\kappa,\lambda)} Y$ if there is some $A \in \Fine(\kappa,\lambda)$ such that $X \cap A \subseteq Y$.
    When $\kappa$ and $\lambda$ are clear from context, we will simply write $X \subseteq_\mathcal{F} Y$.
\end{definition}
Note that by Corollary \ref{cor: generated modulo F}, we have that $\chi_{\Fine(\kappa,\lambda)}(U)=\chi(U)$.
\subsection{The revised Galvin property}
    One may argue that the problem of generalizing Galvin's theorem to normal $P_\kappa(\lambda)$-ultrafilters is that the wrong version of Galvin's property was used. In this section, we present two possible ways the theory generalizes if one uses the inclusion modulo the fine ideal.
    \begin{definition}
    Let $U$ be a fine ultrafilter on $P_\kappa(\lambda)$.
    $\rGal(U,\mu,\mu')$ is the statement that from any $\mu'$-many sets in $U$ there are $\mu$-many which have a lower bound in $\subseteq_\mathcal{F}$.
    In other words, given $\l A_\alpha : \alpha < \mu' \r \subseteq U$ there is some $I \subseteq \mu'$ with $|I| = \mu$ and $A^* \in U$ such that whenever $\alpha \in I$, $A^* \subseteq_\mathcal{F} A_\alpha$.
    \end{definition}
\begin{definition}
    Let $F$ be a filter on a set $D$.
    Suppose $\mathcal{A}$ is a family of subsets of $D$ such that whenever $A_0,\ldots, A_{n-1} \in \mathcal{A}$, $\bigcap_{i < n}A_i \in F^+$.
    Then we denote by $F[\mathcal{A}]$ the minimal filter such that $F \cup \mathcal{A} \subseteq F[\mathcal{A}]$.
\end{definition}
There are limitations on the variation of Galvin's property that can hold. The following limitation is a slight modification of \cite[Thm. 4.3]{Tomcohesive}:
\begin{corollary}
    Let $U$ be a fine ultrafilter on $P_\kappa(\lambda)$ and let $\mu = \cf(\chi(U))$.
    Then $\lnot \rGal(U,\mu,\mu)$.
    In particular, if $2^\lambda = \lambda^+$ then $\lnot \rGal(U,\lambda^+,\lambda^+)$.
\end{corollary}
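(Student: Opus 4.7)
The plan is to carry out the standard diagonalization that links the cofinality of the ultrafilter character to saturation-type properties, adapted to $\subseteq_\mathcal{F}$. First I would observe that $\Fine(\kappa,\lambda)\subseteq U$ by fineness and that $\Fine(\kappa,\lambda)$ is generated by the $\lambda$ cones $\check i$ (so in particular by $|P_\kappa(\lambda)|$-many sets); hence Corollary~\ref{cor: generated modulo F} gives $\chi_\mathcal{F}(U)=\chi(U)$. Fix a $\subseteq_\mathcal{F}$-base $\langle B_\alpha:\alpha<\chi(U)\rangle$ for $U$ together with an increasing cofinal sequence $\langle\rho_i:i<\mu\rangle$ in $\chi(U)$.

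For each $i<\mu$, the initial segment $\{B_\alpha:\alpha<\rho_i\}$ has cardinality strictly less than $\chi_\mathcal{F}(U)$, and therefore fails to be a $\subseteq_\mathcal{F}$-base; pick $A_i\in U$ such that no $B_\alpha$ with $\alpha<\rho_i$ satisfies $B_\alpha\subseteq_\mathcal{F} A_i$. I claim that $\langle A_i:i<\mu\rangle$ refutes $\rGal(U,\mu,\mu)$. Suppose otherwise and let $I\in[\mu]^\mu$ together with $A^*\in U$ satisfy $A^*\subseteq_\mathcal{F} A_i$ for every $i\in I$. Choose $\alpha_0<\chi(U)$ with $B_{\alpha_0}\subseteq_\mathcal{F} A^*$. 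Since $\mu=\cf(\chi(U))$ is regular, $I$ is unbounded in $\mu$, and so $\{\rho_i:i\in I\}$ is cofinal in $\chi(U)$; in particular, I can fix $i\in I$ with $\rho_i>\alpha_0$. Then $B_{\alpha_0}\subseteq_\mathcal{F} A^*\subseteq_\mathcal{F} A_i$ with $\alpha_0<\rho_i$, contradicting the choice of $A_i$.

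For the parenthetical ``in particular'' clause, under $2^\lambda=\lambda^+$ I would verify that $\chi(U)=\lambda^+$, which is regular and so has cofinality $\lambda^+$. Fix any $A\in U$; by fineness $|A|=\lambda$, and the restriction $U\restriction A$ is a uniform ultrafilter on $A$, so by Lemma~\ref{Lemma: size of base} we have $\chi(U\restriction A)>\lambda$. On the other hand, the correspondence between $\subseteq$-bases for $U$ and $\subseteq$-bases for $U\restriction A$ given by $B\mapsto B\cap A$ (and inclusion in the other direction) shows $\chi(U)=\chi(U\restriction A)$, which is at most $|P(A)|=2^\lambda=\lambda^+$. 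Hence $\chi(U)=\lambda^+$ and the general statement applies.

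I do not anticipate a serious obstacle: the argument is essentially the folklore ``character bounds cohesiveness'' diagonalization, reformulated modulo $\Fine(\kappa,\lambda)$. The only subtle point is invoking the regularity of $\mu=\cf(\chi(U))$ to ensure that any $I\in[\mu]^\mu$ remains unbounded in $\mu$, so that $\{\rho_i:i\in I\}$ still exceeds the predetermined index $\alpha_0$.
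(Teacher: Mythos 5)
Your proposal is correct and takes essentially the same route as the paper: both arguments hinge on $\chi_{\mathcal{F}}(U)=\chi(U)$, a sequence cofinal in $\chi(U)$, and the contradiction that a base element $\subseteq_{\mathcal{F}}$-below a putative common lower bound would have to lie $\subseteq_{\mathcal{F}}$-below a witness of larger index. The only cosmetic difference is that you diagonalize by picking new sets $A_i\in U$ not minorized by the first $\rho_i$ base elements, whereas the paper first thins the base so no earlier element is $\subseteq_{\mathcal{F}}$-below a later one and then uses the base elements along a cofinal sequence as the witnesses; your verification that $\chi(U)=\lambda^+$ under $2^\lambda=\lambda^+$ (which, like the paper's remark that members of a fine ultrafilter have cardinality $\lambda$, implicitly uses $|P_\kappa(\lambda)|=\lambda$) is detail the paper leaves unstated.
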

\begin{proof}
    Let $\l X_i : i < \chi_\mathcal{F}(U) \r$ be a $\subseteq_\mathcal{F}$-base for $U$.
    By Lemma \ref{Lemma: size of base}, $\chi_\mathcal{F}(U)=\chi(U) \geq \lambda^+$.
    By thinning out the $\subseteq_\mathcal{F}$-base, we may assume that for any $j < \chi(U)$, $X_j\notin \Fine(\kappa,\lambda)[\l X_i : i < j\r]$ , namely $X_i\not\subseteq_\mathcal{F} X_j$ for any $i<j$. 
    Now let $\l \alpha_i\mid i<\cf(\chi(U))\r$ be increasing and cofinal in $\chi(U)$. 
    We claim that $\l X_{\alpha_i}\mid i<\cf(\chi(U))\r$ witnesses that $\lnot \rGal(U,\lambda,\lambda)$. 
    Otherwise, there is $I$ unbounded in $\cf(\chi(U))$ such that $\{X_{\alpha_i}\mid i\in I\}$ has a $\subseteq_\mathcal{F}$-lower bound $X\in U$. 
    Then there is $j< \cf(\chi(U))$ such that $X_j\subseteq_\mathcal{F} X$. 
    Since $\l \alpha_i\mid i<\cf(\chi(U))\r$ is cofinal and $I$ is unbounded, there is $i\in I$ such that $j<\alpha_i$. 
    But this is impossible since this would mean that $X_j\subseteq_\mathcal{F} X\subseteq_\mathcal{F} X_{\alpha_i}$, contradicting the choice that $X_{\alpha_i}\notin \Fine(\kappa,\lambda)[\l X_j\mid j<\alpha_i\r]$. 
\end{proof}
\begin{definition}
    We say that a fine ultrafilter $U$ on $P_\kappa(\lambda)$ is a $P_\mu$-point if every collection $\l X_i\mid i<\rho\r\subseteq U$ such that $\rho<\mu$ there is a set $A\in U$ such that for every $i<\rho$, $A\setminus X_i\in \Fine(\kappa,\lambda)^*$, where $\Fine(\kappa,\lambda)^*$ is the ideal dual to $\Fine(\kappa,\lambda)$.
\end{definition}
For successor cardinals, Ketonen \cite{ketonenBenda} has another definition of a $p$-point which differs from this one.  
\begin{proposition}
    If $U$ is a normal fine ultrafilter then $U$ is a $P_{\lambda^+}$-point.
\end{proposition}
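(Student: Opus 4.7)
The plan is to produce the witness $A \in U$ by a single application of normality, namely take $A$ to be the diagonal intersection of the sequence $\langle X_i : i < \rho\rangle$ (padded to length $\lambda$ by $P_\kappa(\lambda)$).

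More precisely, since $\rho < \lambda^+$ we may assume $\rho \leq \lambda$; define $X_i = P_\kappa(\lambda)$ for $\rho \leq i < \lambda$, and set
\[ A \coloneq \triangle_{i < \lambda} X_i = \{x \in P_\kappa(\lambda) : \forall i \in x,\, x \in X_i\}. \]
By normality of $U$, $A \in U$. I would then fix an arbitrary $i < \rho$ and consider the fine set $\check{\imath} = \{x \in P_\kappa(\lambda) : i \in x\} \in \Fine(\kappa,\lambda)$. If $x \in A \cap \check{\imath}$, then $i \in x$ and $x \in \triangle_{j < \lambda} X_j$, so by the very definition of the diagonal intersection, $x \in X_i$. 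Hence $A \cap \check{\imath} \subseteq X_i$, which rearranges to $A \setminus X_i \subseteq P_\kappa(\lambda) \setminus \check{\imath}$. Since $P_\kappa(\lambda) \setminus \check{\imath} \in \Fine(\kappa,\lambda)^*$ and the dual ideal is closed under subsets, we conclude $A \setminus X_i \in \Fine(\kappa,\lambda)^*$, as required.

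There is no real obstacle here: the whole argument is the observation that for $P_\kappa(\lambda)$ the diagonal intersection is precisely the operation that trades membership in $X_i$ for the fine set $\check{\imath}$, so normality is literally the $P_{\lambda^+}$-point property modulo $\Fine(\kappa,\lambda)$. The only thing to be careful about is the case $\rho < \lambda$, handled by trivial padding so that the diagonal intersection makes sense at length $\lambda$.
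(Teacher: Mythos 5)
Your proof is correct and is essentially the paper's own argument: both take the diagonal intersection of the family (you pad to length $\lambda$ with $P_\kappa(\lambda)$, the paper re-enumerates in order type $\lambda$), invoke normality to get $A\in U$, and observe that $A\setminus X_i$ is contained in the complement of the cone $\check{\imath}$, hence lies in $\Fine(\kappa,\lambda)^*$.
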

\begin{proof}
    Let $\rho < \lambda^+$ and let $\l X_i : i <\rho\r \subseteq U$. Let $\l X_{i_\nu}\mid \nu<\lambda\r$ be a re-enumeration of order-type $\lambda$.
    Let $$A = \triangle_{\nu < \lambda}X_{i_\nu} = \{x \in P_{\kappa}(\lambda) : x \in \bigcap_{\nu\in x}X_{i_\nu}\}.$$
    Then $A \in U$ by normality.
    Furthermore, for each $i<\rho$ we find $\nu<\lambda$ such that $i=i_\nu$. Then, $$A \setminus X_i=A\setminus X_{i_\nu} \subseteq \{x : i_\nu \notin x\} \in \Fine(\kappa,\lambda)^*.$$
\end{proof}
It is easy to see that the following holds:
\begin{proposition}
    If $U$ is a $P_{\lambda^+}$-point then $\rGal(U,\lambda,\lambda)$.
\end{proposition}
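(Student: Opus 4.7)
The plan is to unwind both definitions and observe that the $P_{\lambda^+}$-point property applied to any $\lambda$-sequence of sets in $U$ directly produces a single $\subseteq_{\mathcal{F}}$-lower bound for the \emph{entire} sequence, which is of course far stronger than what $\rGal(U,\lambda,\lambda)$ requires.

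In detail: let $\langle A_\alpha : \alpha < \lambda \rangle \subseteq U$ be arbitrary. Since $\lambda < \lambda^+$, the $P_{\lambda^+}$-point property applies, yielding $A^* \in U$ such that for every $\alpha < \lambda$, $A^* \setminus A_\alpha \in \Fine(\kappa,\lambda)^*$. By the definition of the dual ideal $\Fine(\kappa,\lambda)^*$, for each $\alpha$ there exists $B_\alpha \in \Fine(\kappa,\lambda)$ with $(A^* \setminus A_\alpha) \cap B_\alpha = \emptyset$, equivalently $A^* \cap B_\alpha \subseteq A_\alpha$. This is exactly the statement $A^* \subseteq_{\mathcal{F}} A_\alpha$. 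Taking $I = \lambda$ (of size $\lambda$) and the witness $A^* \in U$, this verifies the definition of $\rGal(U,\lambda,\lambda)$.

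There is no real obstacle to overcome: the only step requiring any care is the translation between "$A^* \setminus A_\alpha$ belongs to the ideal dual to $\Fine(\kappa,\lambda)$" and "$A^* \subseteq_{\mathcal{F}} A_\alpha$", and this is essentially tautological. In fact, the argument shows the stronger statement that one can take \emph{all} $\lambda$ of the given sets (not merely some $\lambda$-sized subcollection) to share the $\subseteq_{\mathcal{F}}$-lower bound $A^*$, so the proof is a one-line unpacking of definitions.
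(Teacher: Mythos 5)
Your proof is correct and is exactly the definitional unpacking the paper has in mind (the paper states this proposition without proof, as "easy to see"): the $P_{\lambda^+}$-point property applied to the whole $\lambda$-sequence yields a single $A^*\in U$ with $A^*\subseteq_{\mathcal{F}}A_\alpha$ for every $\alpha<\lambda$, which with $I=\lambda$ witnesses $\rGal(U,\lambda,\lambda)$. The translation between $A^*\setminus A_\alpha\in\Fine(\kappa,\lambda)^*$ and $A^*\subseteq_{\mathcal{F}}A_\alpha$ is handled correctly, and noting that you in fact get a common lower bound for all the sets is fine, since it is stronger than required.
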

Let us provide some analogies to the usual characterization of $p$-points using partitions:
 %\begin{definition}
 %   Let $U$ be a fine ultrafilter on $P_\kappa(\lambda)$ and let $f : P_\kappa(\lambda) \to P_\kappa(\lambda)$.
 %   We say $f$ is \textit{unbounded mod $U$} if $f^{-1}[X] \notin U$ for all $X \in \Fine(\kappa,\lambda)$.
 %   We say $f$ is \textit{almost 1-1 mod $U$} if there is $A \in U$ such that for all $X \in \Fine(\kappa,\lambda)$, $f^{-1}[X]\cap A \in \Fine(\kappa,\lambda)$.
%\end{definition}
\begin{proposition}
    Let $U$ be a fine ultrafilter on $P_\kappa(\lambda)$.
    The following are equivalent:
    \begin{enumerate}
        \item $U$ is a $P_{\lambda^+}$-point.
%        \item Every $f : P_\kappa(\lambda) \to P_\kappa(\lambda)$ that is unbounded mod $U$ is almost 1-1 mod $U$.
        \item Whenever $\l X_i : i < \lambda \r \subseteq P(P_\kappa(\lambda))$ either there is $j < \lambda$ such that $X_j \in U$ or else there is an $X \in U$ such that $X \cap X_i \in \Fine(\kappa,\lambda)$ for all $i < \lambda$.
    \end{enumerate}
\end{proposition}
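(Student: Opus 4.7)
The plan is to derive the equivalence by straightforward duality, passing between a family in $U$ and its termwise complements. Property (1) concerns families of sets in $U$, while property (2) concerns an arbitrary family, phrased via failure of membership in $U$; complementation converts each into the other, with the conclusion of (2) naturally read modulo the dual ideal $\Fine(\kappa,\lambda)^*$, in analogy with the classical characterization of $p$-points on $\omega$ via partitions and the Fr\'echet filter.

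For $(1)\Rightarrow(2)$, given $\l X_i:i<\lambda\r\subseteq P(P_\kappa(\lambda))$, assume that no $X_j$ lies in $U$, so each $Y_i:=P_\kappa(\lambda)\setminus X_i$ lies in $U$. Since $\lambda<\lambda^+$, applying the $P_{\lambda^+}$-point property to $\l Y_i:i<\lambda\r$ yields $A\in U$ with $A\setminus Y_i\in\Fine(\kappa,\lambda)^*$ for every $i<\lambda$. Observing $A\setminus Y_i = A\cap X_i$, the set $X:=A$ realizes the second alternative of (2).

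For $(2)\Rightarrow(1)$, let $\rho<\lambda^+$ and $\l Y_i:i<\rho\r\subseteq U$ be given. Since $|\rho|\leq \lambda$, re-enumerate (with repetitions if $\rho<\lambda$) to obtain a sequence $\l Y_\nu:\nu<\lambda\r$ carrying the same collection of sets. Set $X_\nu:=P_\kappa(\lambda)\setminus Y_\nu$; since each $Y_\nu\in U$, no $X_\nu$ lies in $U$. Invoking (2) produces $X\in U$ with $X\cap X_\nu\in\Fine(\kappa,\lambda)^*$, equivalently $X\setminus Y_\nu\in\Fine(\kappa,\lambda)^*$, for every $\nu<\lambda$. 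This is precisely the witness required for the $P_{\lambda^+}$-point property applied to the original $\l Y_i:i<\rho\r$.

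The argument is essentially formal and I do not anticipate a genuine obstacle. The only minor points requiring care are the reduction to an index set of length exactly $\lambda$ in the second direction when $\rho<\lambda$ (handled by repetition), and maintaining a consistent interpretation of the conclusion of (2) in terms of the dual fine ideal $\Fine(\kappa,\lambda)^*$, matching the formulation of the $P_{\lambda^+}$-point property used throughout this subsection.
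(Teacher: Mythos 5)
Your proof is correct and is essentially the paper's own argument: both directions are the straightforward duality obtained by passing to complements, with the re-enumeration of a family of size at most $\lambda$ to a $\lambda$-sequence handled exactly as in the paper's surrounding proofs. Your reading of the conclusion of (2) as membership in the dual ideal $\Fine(\kappa,\lambda)^*$ (equivalently $X\subseteq_{\mathcal F}X_i^c$) is the intended one, so nothing further is needed.
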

\begin{proof}
    Assume $U$ is a $P_{\lambda^+}$-point and
    let $\l X_i\mid i<\lambda\r\subseteq P(P_\kappa(\lambda)$ be such that for every $i<\lambda$, $X_i\notin U$. Then $\l X_i^c\mid i<\lambda\r\subseteq U$. By $(1)$, there is $X\in U$ such that $X\subseteq_{\mathcal{F}} X_i^c$ for each $i$. This means that $X\cap X_i\in \Fine(\kappa,\lambda)$ for every $i<\lambda$. The other direction is similar.   %Enumerate $\Fine(\kappa,\lambda) = \l X_i : i < \lambda \r$.
    %Since $U$ is a $P_{\lambda^+}$-point, we may find a $\Fine(\kappa,\lambda)$-lower bound $A \in U$ for $\l f^{-1}[X_i]^c : i < \lambda \r\subseteq U$. This means that for every $i$, $A\cap f^{-1}[X_i]\in \Fine(\kappa,\lambda)$, and by definition $f$ is almost 1-1 on $A$.

  %  $(2 \Rightarrow 3)$
  %  Let $\l X_a : a \in P_\kappa(\lambda) \r \subseteq P(P_\kappa(\lambda))$ partition $P_\kappa(\lambda)$.
    %{\color{orange}(By Solovay $\lambda^{<\kappa} = \lambda$ so we may index by $P_\kappa(\lambda)$ instead of $\lambda$)--not sure what you mean here, do you mean the result when the sup function is one-to-one? for example if $\cf(\lambda)=\omega$ then necessarily $\lambda^{<\kappa}>\lambda$. We need to assume that I think.}.
    %Suppose $X_a \notin U$ for all $a$.
    %Let $f : P_\kappa(\lambda) \to P_\kappa(\lambda)$ such that $b \in X_{f(b)}$.
    %Then $f$ is unbounded mod $U$ {\color{orange} Why is it unbounded? to say it is, we need to show that for every $i$, $\bigcup\{X_a\mid  i\notin a\}\notin U$}, so let $A \in U$ such that $f$ is almost 1-1 mod $U$.
    %Then $A \cap X_i \in \Fine(\kappa,\lambda)$ for all $a$.

%    $(2 \Rightarrow 1)$
 %   Let $\l X_i : i < \lambda \r \subseteq U$ and let $A \in U$ such that $A \cap (P_\kappa\lambda \setminus X_i) \in \Fine(\kappa,\lambda)$.
 %   Then $A$ is a $\Fine(\kappa,\lambda)$-lower bound of the $X_i$'s, as $U$ is fine and $P_\kappa(\lambda) \setminus X_i$ is measure $0$ for $U$. 
\end{proof}
The variation of Galvin's theorem we obtain here is with respect to the following special kind of intersection:
\begin{definition}
    Let $\l A_x\mid x\in P_\kappa(\lambda)\r\subseteq P_\kappa(\lambda)$. The \textit{cone intersection} of the sequence $\l A_x\mid x\in P_\kappa(\lambda)\r\subseteq P_\kappa(\lambda)$, is the set $$\hourglass_{x\in P_\kappa(\lambda)}A_x=\{y\in P_\kappa(\lambda)\mid \forall x\in P_\kappa(\lambda),\ (x\prec y\vee y\subseteq x)\Rightarrow y\in A_x\}$$
\end{definition}
Clearly, the cone intersection satisfies:
$$\bigcap_{x\in P_\kappa(\lambda)}A_x\subseteq \hourglass_{x\in P_\kappa(\lambda)}A_x\subseteq\Delta_{x\in P_\kappa(\lambda)}A_x$$
For example, if $A_x=\{z\in P_\kappa(\lambda)\mid x\subseteq z\}$, then for every $y\in P_\kappa(\lambda)$, there is $y\subsetneq x$, which means that $y\notin A_x$, and in turn $y\notin \hourglass_{x\in P_\kappa(\lambda)}A_x$, i.e. $\hourglass_{x\in P_\kappa(\lambda)}A_x=\emptyset$. If follows that normal measures on $P_\kappa(\lambda)$ are not in general closed under cone intersections of their elements. Nonetheless, we have the following analogy of Galvin's theorem:
\begin{theorem}\label{thm: wierd Galvin}
    Suppose that $\lambda^{<\kappa}=\lambda$ and let $U$ be a normal fine ultrafilter over $P_\kappa(\lambda)$. Then for any $\l A_\alpha\mid \alpha<\lambda^+\r\subseteq U$ there are sets $H_x\in [\lambda^+]^{\lambda^+}$ for $x\in P_\kappa(\lambda)$ such that for every choice $\l\alpha_x\mid x\in P_\kappa(\lambda)\r\in \prod_{x\in P_\kappa(\lambda)}H_x$, $\hourglass_{x\in P_\kappa(\lambda)}A_{\alpha_x}\in U$.
\end{theorem}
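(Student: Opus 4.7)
The plan is to mirror the proofs of Galvin's theorem (Theorem \ref{thm: galvin theorem}) and Theorem \ref{theorem: cf(kappa) galvin}, exploiting the fact that the definition of $\hourglass$ splits its requirement on $y$ into two clauses ($x \prec y$ and $y \subseteq x$) that will be handled by two different devices: a $P_\kappa(\lambda)$-indexed diagonal intersection for the former, and an agreement-on-$P_\kappa(x)$ condition for the latter.

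First, I would establish the following auxiliary lemma: for every $\langle B_x \mid x \in P_\kappa(\lambda)\rangle \subseteq U$, the set
\[
D(\vec B) := \{y \in P_\kappa(\lambda) : \forall x \in P_\kappa(\lambda),\ x \prec y \Rightarrow y \in B_x\}
\]
belongs to $U$. Via the usual characterization $A \in U \iff j_U[\lambda] \in j_U(A)$, this reduces to showing that in $M_U$, every $x$ with $x \prec j_U[\lambda]$ satisfies $j_U[\lambda] \in (j_U(\vec B))_x$. Since $j_U(\kappa) \cap j_U[\lambda] = \kappa$, any such $x$ is a subset of $j_U[\lambda]$ of cardinality $<\kappa$, hence of the form $j_U(x_0) = j_U[x_0]$ for $x_0 := j_U^{-1}[x] \in P_\kappa(\lambda)$. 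Then $(j_U(\vec B))_x = j_U(B_{x_0})$, which contains $j_U[\lambda]$ because $B_{x_0} \in U$.

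Next, mirroring the central claim in Galvin's proof, I would define for $\alpha < \lambda^+$ and $x \in P_\kappa(\lambda)$
\[
H_{\alpha, x} := \{i < \lambda^+ : A_i \cap P_\kappa(x) = A_\alpha \cap P_\kappa(x)\},
\]
and show by a double pigeonhole that there exists $\alpha^* < \lambda^+$ such that $|H_{\alpha^*, x}| = \lambda^+$ for every $x$. If no such $\alpha^*$ existed, pick $x_\alpha$ with $|H_{\alpha, x_\alpha}| \leq \lambda$ for each $\alpha$; using $|P_\kappa(\lambda)| = \lambda^{<\kappa} = \lambda$, stabilize $x_\alpha = x^*$ on some $X \in [\lambda^+]^{\lambda^+}$; since $\kappa$ is inaccessible, $|P_\kappa(x^*)| = 2^{|x^*|} < \kappa$, so $A_\alpha \cap P_\kappa(x^*)$ takes fewer than $\kappa \leq \lambda$ values, which can be stabilized to some $E^*$ on $X' \in [X]^{\lambda^+}$, contradicting $X' \subseteq H_{\alpha, x^*}$ while $|H_{\alpha, x^*}| \leq \lambda$.

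Finally, set $H_x := H_{\alpha^*, x}$, so $|H_x| = \lambda^+$. Given any choice $\langle \alpha_x \mid x \in P_\kappa(\lambda) \rangle \in \prod_x H_x$, apply the auxiliary lemma to $\langle A_{\alpha_x} \mid x \in P_\kappa(\lambda)\rangle$ to obtain $D \in U$, and set $B := A_{\alpha^*} \cap D \in U$. For $y \in B$ and $x$ with $x \prec y$, $y \in A_{\alpha_x}$ directly from $y \in D$; if instead $y \subseteq x$, then $y \in P_\kappa(x)$, so $y \in A_{\alpha^*} \cap P_\kappa(x) = A_{\alpha_x} \cap P_\kappa(x) \subseteq A_{\alpha_x}$ by the agreement condition $\alpha_x \in H_{\alpha^*, x}$. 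Thus $B \subseteq \hourglass_{x \in P_\kappa(\lambda)} A_{\alpha_x}$, so the latter lies in $U$. The principal obstacle is the auxiliary lemma: the standard formulation of normality concerns $\lambda$-indexed diagonal intersections, whereas we need a $P_\kappa(\lambda)$-indexed version, most cleanly established via the supercompactness embedding using that $j_U(x_0) = j_U[x_0]$ for $|x_0| < \kappa$.
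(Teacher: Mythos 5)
Your proposal is correct and is essentially the paper's proof: the same sets $H_{\alpha,x}$ (note $P_\kappa(x)=P(x)$ here), the same pigeonhole argument producing $\alpha^*$, and the same use of $j_U(x_0)=j_U[x_0]$ for $|x_0|<\kappa$ together with the reflected agreement $A_{\alpha_x}\cap P(x)=A_{\alpha^*}\cap P(x)$ to handle the two clauses of the cone intersection. The only difference is organizational: you factor the $\prec$-clause into a closure-under-cone-diagonal-intersections lemma and then verify $A_{\alpha^*}\cap D\subseteq \hourglass_{x}A_{\alpha_x}$ in $V$, whereas the paper checks $\id_U\in j_U(\hourglass_{x}A_{\alpha_x})$ directly in the ultrapower.
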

\begin{proof}
    For each $x\in P_\kappa(\lambda)$ and $\alpha<\lambda^+$ let
    $$H_{\alpha,x}=\{\beta<\lambda^+\mid A_\beta\cap P(x)=A_\alpha\cap P(x)\}$$
    \begin{claim}
        There is $\alpha^*<\lambda^+$ such that for every $x\in P_\kappa(\lambda)$, $|H_{\alpha^*,x}|=\lambda^+$.
    \end{claim}
    \begin{proof}[Proof of Claim.]
        Otherwise, for each $\alpha<\lambda^+$ there is $x_\alpha$ such that $|H_{\alpha,x_\alpha}|\leq\lambda$. Since $\lambda^{<\kappa}=\lambda$, there is $S\in [\lambda^+]^{\lambda^+}$ and $x^*\in P_\kappa(\lambda)$ such that for every $\alpha\in S$, $x_\alpha=x^*$. Next, note that the value of $A_\alpha\cap P(x^*)$ has $2^{2^{|x^*|}}$-many possibilities. Since $\lambda^{<\kappa}=\lambda$, there is $A^*\subseteq P(x^*)$ and $S'\in [S]^{\lambda^+}$ such that for every $\alpha\in S'$, $A_\alpha\cap P(x^*)=A^*$. But this is impossible, since if $\alpha\in S'$, then on one hand $|H_{\alpha,x_\alpha}|\leq\lambda$, on the other hand, $S'\subseteq H_{\alpha,x_\alpha}$.
    \end{proof}
    Fix $\alpha^*$ as in the claim above and let $H_x=H_{\alpha^*,x}$. For every $x\in P_\kappa(\lambda)$, choose $\alpha_x\in H_{\alpha^*,x}$ and let $A_x=A_{\alpha_x}$. To see that $\hourglass_{x\in P_\kappa(\lambda)}A_x\in U$, we prove that $\id_U=j_U[\lambda]\in j_U(\hourglass_{x\in P_\kappa(\lambda)}A_x)=(\hourglass_{x\in P_{j(\kappa)}(j(\lambda))}A'_x))^{M_U}$, where $j_U(\l A_x\mid x\in P_\kappa(\lambda)\r)=\l A'_x\mid x\in P_{j(\kappa)}(j(\lambda))^{M_U}\r$. Towards this, following the definition of the cone intersection, let $x\prec j_U[\lambda]$, then there is $y\in P_\kappa(\lambda)$ such that $j_U(y)=j_U[y]=x$. In particular, $A'_x=j_U(A_y)$ and since $A_y\in U$, $\id_U\in j_U(A_y)$. For the second part of the definition of $(\hourglass_{x\in P_\kappa(\lambda)} A'_x)^{M_U}$, let $y\in (P_{j(\kappa)}(j(\lambda)))^{M_U}$ such that $\id_U\subseteq y$. By elementarity and the choice of the sequence $\l \alpha_x\mid x\in P_\kappa(\lambda)\r$, $M_U\models j_{U}(A_{\alpha^*})\cap P(y)=A_{y}\cap P(y)$. Since $\id_U\subseteq y$ we conclude that $\id_U\in j_U(A^*)\cap P(y)$ and therefore in $A_y$. It follows that $ \id_U\in j_U(\hourglass_{x\in P_{\kappa}(\lambda)}A_x)$, as wanted.
\end{proof}
\begin{remark}
    The above generalizes Galvin's theorem in the following sense: a normal ultrafilter $U$ on a measurable cardinal $\kappa$ can be identified as a normal fine ultrafilter on $P_\kappa(\kappa)$ which concentrates on the set of ordinals $\kappa\subseteq P_\kappa(\kappa)$.
    Now the above theorem says that if we take $\l A_\alpha\mid \alpha<\kappa^+\r\subseteq U$, then there are $H_x\in [\kappa^+]^{\kappa^+}$ such that for every $\l \alpha_x\mid x\in P_\kappa(\kappa)\r\in \prod_{x\in P_\kappa(\kappa)}$. 
    $\hourglass_{x\in P_\kappa(\kappa)}A_{\alpha_x}\in U$. Hence  $ \hourglass_{x\in P_\kappa(\kappa)}A_{\alpha_x}\in U$. 
    We claim that $\text{Card.}\cap\hourglass_{x\in P_\kappa(\kappa)}A_{\alpha_x}\subseteq \bigcap_{\nu<\kappa}A_{\alpha_\nu}$, where $\text{Card.}$ is the class of cardinals (here we view $\nu$ as an element of $P_\kappa(\kappa)$).
    Indeed, let $\rho\in \text{Card.}\cap \hourglass_{x\in P_\kappa(\kappa)}A_{\alpha_x}$ and $\nu<\kappa$. 
    Then either $\nu<\rho$ in which case $\nu\prec \rho$ and thus $\rho\in A_{\alpha_\nu}$. 
    Otherwise, $\rho\leq \nu$ in which case $\rho\subseteq \nu$ and again $\rho \in A_{\alpha_\nu}$ by definition of $\hourglass$.
    We conclude that $\rho\in \bigcap_{\nu<\kappa}A_{\alpha_\nu}$ which recovers Galvin's theorem.
\end{remark}
\subsection{An Ideal From Two-Cardinal Filter Games}
Throughout this section we assume that $\kappa \leq \lambda$ are cardinals and $\lambda^{<\kappa} = \lambda$. In this section, we would like to generalize Theorem~\ref{theorem: fmz} of \cite[Thm 1.2]{foremanmagidorzeman} connecting winning strategy in the filter games and large cardinal ideals, to the two-cardinal filter games. These were interested in \cite{tomvika}. To set up the game, we first need the following definition: 
\begin{definition}
    Fix some large regular cardinal $\theta$.
    A set $M \prec H_\theta$ of size $\lambda$ is called a \emph{$(\kappa,\lambda)$-model} if 
    \begin{enumerate}
        \item $M$ is transitive.
        \item $M \models \ZFC^{-}$. \ {}\footnote{$ZFC^{-}$ is the theory obtained from $\ZFC$ by removing the axiom of powerset, replacing the replacement schema with collection, and replacing the axiom of choice to the well-ordering principle}
        \item $M \prec_{\Sigma_0} V$.
        \item $\lambda+1 \subseteq M$ and $P_\kappa(\lambda)^M \subseteq M$.
    \end{enumerate}
    \end{definition}
    Given a $(\kappa,\lambda)$-model $N$ we say that $U$ is a normal $N$-ultrafilter on $P_\kappa(\lambda)$ if $$(N,\in,U)\models U\text{ is a fine normal ultrafilter on }P_\kappa(\lambda).$$ That is, $U$ measures all the sets in $P(P_\kappa(\lambda))\cap N$, and whenever $\l A_\alpha\mid \alpha<\lambda\r\in N$ is a collection of $\lambda$-many sets in $U$, $\Delta_{\alpha<\lambda}A_\alpha\in U$.

    Let us turn to the definition of the games $G_1$ and $G_2$. This is a game between two players, \textit{the Challenger} and \textit{the Judge}, taking turns. Informally, the Challenger presents the Judge with a challenge-- a collection of sets they have to measure. The Judge responds with a normal ultrafilter measuring this collection. In further steps of the game, the Challenger can add more sets to the ones which they previously presented, and the Judge has to extend the previous ultrafilter to the new collection.

    The game $G_1$, generalizing the game $G_1$ from \cite{foremanmagidorzeman}. For that we need an \emph{internally approachable} sequence, that is, a sequece $\l N_i : i< \lambda^+\r$ of $(\kappa,\lambda)$-models which is an increasing, continuous elementary chain such that $\l N_i : i < \alpha\r \in N_{\alpha'} $ for all $\alpha < \alpha' < \lambda^+$. %{\color{orange} I think there should be other requirements other than that to be internally approachable, please check the FMZ paper, for example we need to say that the union equaly $P(P_\kappa(\lambda))$. Also we need here a global assumption that $2^\lambda=\lambda^+$, and the length of the internally approachable sequence should be $\lambda^+$ rather than $\mu$.}
\begin{remark}
    If $|P(P_\kappa(\lambda))| = \lambda^+$ then in $H_\theta$ there is a wellorder of $P_{\lambda^+}(P(P_\kappa(\lambda)))$ in order type $\lambda^+$.
    Hence $P(P_\kappa(\lambda)) = \bigcup_{i< \mu} N_i\cap P(P_\kappa(\lambda))$, and every $N_i$-ultrafilter appears in $N_j$ for some $j > i$.
\end{remark}
    \begin{definition}[The game $G_1$]
    Fix any ordinal $\gamma$. The rules of $G^\gamma_1$ are as follows:
    \begin{enumerate}
            \item The Challenger plays an increasing sequence of ordinals $\alpha_i < \lambda^+$.
            \item The Judge plays a sequence $U_i$ of $N_{\alpha_i+1}$-ultrafilters on $P_\kappa(\lambda)$  such that:
            \begin{enumerate}
                \item $\l U_j\mid j<i\r\in N_{\alpha_i+1}$.
                \item $U_j \subseteq U_i$ for all $i > j$.
            \end{enumerate}
        \end{enumerate}
    The Challenger goes first at limit stages.
    The game proceeds for some length $\ell < \gamma$ determined by the play, for some fixed $\gamma \leq \lambda^+$.
    The game continues until either the Judge has no valid move or the play has reached length $\gamma$.
    The Judge wins only if the play reaches length $\gamma$.
\end{definition}
%
 %   Let us define the set game $G_2$, generalizing the game $G_2$ from \cite{foremanmagidorzeman}:
  %  A sequence $\l N_i : i< \lambda^+\r$ of $(\kappa,\lambda)$-models is called \emph{internally approachable} if it is an increasing, continuous elementary chain such that such that $\l N_i : i < \alpha\r \in N_{\alpha'} $ for all $\alpha < \alpha' < \lambda^+$. %{\color{orange} I think there should be other requirements other than that to be internally approachable, please check the FMZ paper, for example we need to say that the union equaly $P(P_\kappa(\lambda))$. Also we need here a global assumption that $2^\lambda=\lambda^+$, and the length of the internally approachable sequence should be $\lambda^+$ rather than $\mu$.}
%\begin{remark}
 %   If $|P(P_\kappa(\lambda))| = \lambda^+$ then in $H_\theta$ there is a wellorder of $P_{\lambda^+}(P(P_\kappa(\lambda)))$ in order type $\lambda^+$.
  %  Hence $P(P_\kappa(\lambda)) = \bigcup_{i< \mu} N_i\cap P(P_\kappa(\lambda))$, and every $N_i$-ultrafilter appears in $N_j$ for some $j > i$.
%\end{remark}
\begin{definition}[The game $G_2$]
    Fix an internally approachable sequence of $(\kappa,\lambda)$-models $\l N_i : i<\lambda^+\r$.
    The rules of $G^\gamma_2$ are as follows:
        \begin{enumerate}
            \item The Challenger plays an increasing sequence of ordinals $\alpha_i < \lambda^+$.
            \item The Judge plays a sequence of sets $Y_i \subseteq P_\kappa(\lambda)$ such that:
            \begin{enumerate}
                \item $Y_i \subseteq_{\mathcal{F}} Y_j$ for all $i > j$ and $\l Y_j\mid j<i\r\in N_{\alpha_i+1}$.
                \item $U_i = \{ X \in P(P_\kappa(\lambda)) \cap N_{\alpha_i+1} : Y_i \subseteq_{\mathcal{F}} X\}$ is a normal $N_{\alpha_i+1}$-ultrafilter on $P_\kappa(\lambda))$.
            \end{enumerate}
        \end{enumerate}
        \end{definition}
    The rules and winning conditions of $G^\gamma_2$ are the same as those of $G^\gamma_1$.
\begin{proposition}
    The following are equivalent:
    \begin{enumerate}
        \item the Judge has a winning strategy in $G^\gamma_1$.
        \item the Judge has a winning strategy in $G^\gamma_2$.
    \end{enumerate}
\end{proposition}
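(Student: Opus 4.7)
My plan is to prove the equivalence in two directions, with the $(2) \Rightarrow (1)$ direction being essentially immediate and the $(1) \Rightarrow (2)$ direction requiring the bulk of the work via diagonal intersections.

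For $(2) \Rightarrow (1)$: suppose $\tau$ is a winning strategy for the Judge in $G_2^\gamma$. Define a strategy $\sigma$ for $G_1^\gamma$ which on input $\langle \alpha_j : j \leq i\rangle$ first computes $Y_i = \tau(\langle \alpha_j : j \leq i\rangle)$ and then outputs $U_i := \{X \in P(P_\kappa(\lambda)) \cap N_{\alpha_i+1} : Y_i \subseteq_{\mathcal F} X\}$. By the legality of Judge's moves in $G_2$, this $U_i$ is a normal $N_{\alpha_i+1}$-ultrafilter. For monotonicity $U_j \subseteq U_i$ when $j < i$, note that if $X \in U_j$ then $Y_j \subseteq_{\mathcal F} X$, and combined with $Y_i \subseteq_{\mathcal F} Y_j$ this gives $Y_i \subseteq_{\mathcal F} X$, so $X \in U_i$ (since $X \in N_{\alpha_j+1} \subseteq N_{\alpha_i+1}$). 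The sequence $\langle U_j : j < i\rangle$ lies in $N_{\alpha_i+1}$ because it is definable from $\langle Y_j : j < i\rangle \in N_{\alpha_i+1}$ and the models $N_{\alpha_j+1}$.

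For $(1) \Rightarrow (2)$: let $\sigma$ be a winning strategy for the Judge in $G_1^\gamma$. Define a strategy $\tau$ for $G_2^\gamma$ as follows. At stage $i$, having seen $\langle \alpha_j : j \leq i\rangle$, use $\sigma$ to compute $U_i = \sigma(\langle \alpha_j : j \leq i\rangle)$. Since $|N_{\alpha_i+1}| = \lambda$, fix (inside $N_{\alpha_i+1}$, using a canonical wellorder of $N_{\alpha_i+1}$) an enumeration $U_i \cap N_{\alpha_i+1} = \{A_\beta : \beta < \lambda\}$ and set
\[
Y_i := \triangle_{\beta < \lambda} A_\beta = \{x \in P_\kappa(\lambda) : \forall \beta \in x, \, x \in A_\beta\}.
\]
Since $U_i$ is a normal $N_{\alpha_i+1}$-ultrafilter and the enumeration lies in $N_{\alpha_i+1}$, $Y_i \in U_i$, so in particular $Y_i \notin \Fine(\kappa,\lambda)^*$. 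Also $Y_i \in N_{\alpha_i+1}$. I then verify the three things needed. First, for each $A_\beta \in U_i$ we have $Y_i \cap \{x : \beta \in x\} \subseteq A_\beta$, so $Y_i \subseteq_{\mathcal F} A_\beta$; conversely, if $B \in P(P_\kappa(\lambda)) \cap N_{\alpha_i+1}$ with $B \notin U_i$, then $B^c \in U_i$, so $Y_i \subseteq_{\mathcal F} B^c$, and combined with $Y_i \notin \Fine^*$ this gives $Y_i \not\subseteq_{\mathcal F} B$. Hence $U_i = \{X \in P(P_\kappa(\lambda)) \cap N_{\alpha_i+1} : Y_i \subseteq_{\mathcal F} X\}$ is a legal Judge's move in $G_2$. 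Second, monotonicity $Y_i \subseteq_{\mathcal F} Y_j$ for $j < i$: by $\sigma$'s legality we have $U_j \subseteq U_i$, so $Y_j \in U_j \subseteq U_i$ and $Y_j \in N_{\alpha_i+1}$ by internal approachability, whence $Y_i \subseteq_{\mathcal F} Y_j$ by the previous step.

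The main technical obstacle I anticipate is book-keeping regarding which sequences live in which models: we need $\langle Y_j : j < i\rangle \in N_{\alpha_i+1}$ at each stage. This follows because $\langle U_j : j < i\rangle \in N_{\alpha_i+1}$ by $\sigma$'s legality, each $Y_j$ is definable uniformly from $U_j$ and $N_{\alpha_j+1}$ using the canonical enumeration, and the internally approachable sequence $\langle N_{\alpha_j+1} : j < i\rangle$ is itself in $N_{\alpha_i+1}$. A secondary subtlety is the limit case $i$ of the induction, but since $\sigma$ supplies a legal $U_i$ at limit stages and the diagonal intersection construction is uniform, the same argument applies. Since the Judge's plays in the two games correspond step-by-step against the same Challenger moves, $\sigma$ is winning iff $\tau$ is winning, completing the equivalence.
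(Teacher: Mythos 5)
Your overall route is the same as the paper's: the $(2)\Rightarrow(1)$ direction by passing to the generated ultrafilter, and the $(1)\Rightarrow(2)$ direction by running an auxiliary play of $G^\gamma_1$ and answering with diagonal intersections of enumerations of the ultrafilters $U_i$. The first direction is fine, but in the second there is a genuine gap at the central step: you claim to fix the enumeration $U_i=\{A_\beta:\beta<\lambda\}$ ``inside $N_{\alpha_i+1}$'' and then to conclude $Y_i=\triangle_{\beta<\lambda}A_\beta\in U_i$ from normality of $U_i$. This cannot work as stated. $U_i$ is an \emph{external} $N_{\alpha_i+1}$-ultrafilter, so in general $U_i\notin N_{\alpha_i+1}$, no enumeration of $U_i$ is an element of $N_{\alpha_i+1}$ (nor is a wellorder of the whole model a member of the model), and normality of $U_i$ only applies to $\lambda$-sequences of $U_i$-sets that lie in $N_{\alpha_i+1}$. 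In fact $Y_i$, being computed from an external parameter, need not even belong to $N_{\alpha_i+1}$, so it cannot belong to $U_i$. This is not a cosmetic slip: two of your subsequent steps lean on $Y_i\in U_i$, namely the fine-positivity of $Y_i$ (needed so that $\{X\in P(P_\kappa(\lambda))\cap N_{\alpha_i+1}: Y_i\subseteq_{\mathcal{F}}X\}$ is a proper filter and hence equals $U_i$, which is clause (b) of legality) and the monotonicity argument ``$Y_j\in U_j\subseteq U_i$''.

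The paper's proof avoids exactly this by choosing the $<_{H_\theta}$-minimal enumeration of $U_j$: by elementarity that enumeration belongs to every model $N_\beta$ with $U_j\in N_\beta$, in particular to $N_{\alpha_i+1}$ at all later legal stages $i>j$. It is then the \emph{later} ultrafilters that catch the diagonal intersection: normality of $U_i$ applied to the enumeration of $U_j$ (now an element of $N_{\alpha_i+1}$) gives $Y_j\in U_i$, and since $Y_j$ then occurs in the enumeration of $U_i$, one gets $Y_i\subseteq_{\mathcal{F}}Y_j$. Fine-positivity of the current $Y_i$ is obtained the same way by one more application of $\sigma$: since $\sigma$ is winning, it answers any sufficiently large next challenge $\beta$ with a normal $N_{\beta+1}$-ultrafilter $U'\supseteq U_i$ whose model contains the enumeration of $U_i$, so $Y_i\in U'$, and in particular $Y_i$ meets every cone. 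Your book-keeping claim $\langle Y_j: j<i\rangle\in N_{\alpha_i+1}$ also becomes correct once the enumerations are taken canonically from the wellorder of $H_\theta$ rather than ``inside'' the current model. So the architecture is right, but the assertion $Y_i\in U_i$ is unjustified (indeed typically false) and must be replaced by the statement that $Y_j$ belongs to all later ultrafilters produced by $\sigma$.
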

\begin{proof}
    The direction $(2) \Rightarrow (1)$ is trivial, since given the set produced by a strategy for $G_2^\gamma$ the Judge can play the ultrafilter determined by that set in $G_1^\gamma$.

    Given a $G^\gamma_1$-winning strategy $\sigma$, we will define a $G^\gamma_2$-winning strategy $\sigma'$.
    For the first move, the Challenger plays an ordinal $\alpha_0$ and $\sigma$ will yield a normal $N_{\alpha_0}+1$ ultrafilter $U_0$.
    To determine the move $Y_0$ given by $\sigma'$, choose $\beta$ large enough that $U_0 \in N_\beta$ and choose a minimal enumeration of $U_0 = \{X_\xi : \xi < \lambda \}$ is the well-ordering of $H_\theta$. By elementarity, and since $U_0\in N_\beta$, this enumeration is in $N_\beta$.
    Then set $Y_0 = \triangle_{\xi < \lambda} X_\xi$. Note that any legal move $\alpha_1$ of the challenger will have to satisfy that $Y_0\in N_{\alpha_1}$ and also $U_0\in N_{\alpha_1}$. Again by elementarity, we will have that the minimal enumeration $U_0 = \{X_\xi : \xi < \lambda \}$ will also be in $N_{\alpha_1}$, so whenever $U_0\subseteq U_1$ is a normal $N_{\alpha_1}$-ultrafilter, $Y_0\in U_1$.
    
    Now suppose that $\l(\alpha_j,Y_j)\mid j<i\r$ is a run in the game according to $\sigma'$ which has already been defined. Also assume that we maintained an auxiliary $G^\gamma_1$-run $\l (\alpha_j,U_j)\mid j<i\r$, such that each $Y_j$ is the diagonal intersection of $U_j$ according to the minimal enumeration of $U_j$ in the $H_\theta$ well-ordering.

    Now suppose that the challenger in the game $G^\gamma_2$ plays a legal move $\alpha_i$. Note that $\l (U_j,Y_j)\mid j<i\r \in N_{\alpha_i+1}$, and that for each $j<i$, the minimal enumeration of $U_j$ is a member of $N_{\alpha_i+1}$. Next, we let the challenger in the game $G^\gamma_1$ play $\alpha_i$, and the strategy $\sigma'$ produces an $N_{\alpha_i+1}$-normal ultrafilter $U_i$ which extends all the $U_j$'s for $j<i$. It follows that $Y_j\in U_i$ for each $j<i$. Let $U_i=\{X^i_\xi\mid \xi<\lambda\}$ be the minimal enumeration of $U_i$, we set $Y_i=\triangle_{\xi < \lambda}X^i_\xi$. It is routine to check that $\sigma$ is a winning strategy in the game $G^\gamma_2$.
 \end{proof}
Next, we shall define some properties of ideals that appear in the main theorem of this section.
\begin{definition}
    Let $I$ be an ideal on $P_\kappa(\lambda)$.
    \begin{enumerate}
        \item $I$ is \emph{normal} if $I$ is closed under diagonal unions: whenever $\l X_\alpha : \alpha < \lambda\r \subseteq I$,
        \[ \underset{{\alpha<\lambda}}{\bigtriangledown}X_\alpha \coloneq \{ x \in P_\kappa(\lambda) : x \in \bigcup_{\alpha \in x} X_\alpha \}\in I\]
        \item $I$ is \emph{precipitous} if whenever $G \subseteq P(P_\kappa(\lambda))/I$ is generic over $V$, the generic $V$-ultrapower $\Ult(V,G)$ is well-founded.
        \item $D \subseteq I^+$ is \emph{dense} if $D$ is a dense subset of the partial order $(P(P_\kappa(\lambda))/I,\subseteq_I)$.
        \item $I$ is \emph{$\lambda$-measuring} if for any $\l A_\alpha\mid \alpha<\lambda\r\subseteq P_\kappa(\lambda)$ and any $S\in I^+$ there is $S'\subseteq S$, $S'\in I^+$ such that for any $\alpha<\lambda$, $S'\subseteq_{\mathcal{F}} A_\alpha$ or $S'\subseteq_{\mathcal{F}} P_\kappa(\lambda)\setminus A_\alpha$. 
    \end{enumerate}
\end{definition}

\begin{theorem}\label{theorem: game ideal}
    Suppose that $2^\lambda=\lambda^+$, there is no saturated ideal on $P_\kappa(\lambda)$, and that the Judge has a winning strategy for the game $G^\gamma_2$ for some regular $\omega<\gamma\leq\lambda$.
    Then there is an ideal $I$ on $P_\kappa(\lambda)$ such that:
    \begin{enumerate}
        \item $I$ is normal.
        \item $I$ is precipitous.
        \item $I^+$ has a dense subtree $(T,\supseteq_\mathcal{F})$ which is $\gamma$-closed.
        \item $I$ is $\lambda$-measuring. 
    \end{enumerate}
\end{theorem}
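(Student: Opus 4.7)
The plan is to adapt the Foreman–Magidor–Zeman construction from Theorem \ref{theorem: fmz} to the two-cardinal setting, replacing their use of $G_1^\gamma$ with the set-valued game $G_2^\gamma$. First I would fix a winning strategy $\sigma$ for the Judge in $G_2^\gamma$, together with an internally approachable sequence $\l N_i : i < \lambda^+ \r$ of $(\kappa,\lambda)$-models; the hypothesis $2^\lambda = \lambda^+$ ensures $P(P_\kappa(\lambda)) = \bigcup_{i<\lambda^+} N_i \cap P(P_\kappa(\lambda))$. The tree $T$ would then consist of those sets $Y \subseteq P_\kappa(\lambda)$ that appear as some Judge-move $Y_\beta$ in a $\sigma$-conforming partial play of length $\beta+1 < \gamma$, ordered so that $Y'$ is a descendant of $Y$ precisely when $Y' \subseteq_\mathcal{F} Y$. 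The ideal $I$ is then defined dually: $X \in I$ iff no node $Y \in T$ satisfies $Y \subseteq_\mathcal{F} X$, so that $X \in I^+$ iff some tree node refines $X$ modulo $\mathcal{F}$.

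Density of $T$ in $(I^+, \supseteq_\mathcal{F})$ is built into the definition of $I$, and $\gamma$-closure is a direct consequence of the fact that $\sigma$ is winning: at any limit $\beta < \gamma$ reached along a play by $\sigma$, the Challenger's next move must still admit a legal Judge response, yielding a node $Y_\beta \in T$ with $Y_\beta \subseteq_\mathcal{F} Y_i$ for all $i < \beta$. To verify normality of $I$, suppose $\bigtriangledown_{\alpha < \lambda} X_\alpha$ were positive with each $X_\alpha \in I$; then extending a $\sigma$-play until the Challenger introduces a model $N_{\alpha_\beta+1}$ containing $\l X_\alpha\r$ while refining into $\bigtriangledown_{\alpha} X_\alpha$, the resulting Judge-move $Y$ would determine a normal $N_{\alpha_\beta+1}$-ultrafilter $U_Y = \{A \in N_{\alpha_\beta+1} \cap P(P_\kappa(\lambda)) : Y \subseteq_\mathcal{F} A\}$, forcing $Y \subseteq_\mathcal{F} X_{\alpha_0}$ for some $\alpha_0$, which contradicts $X_{\alpha_0} \in I$. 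The $\lambda$-measuring property is proved analogously: given $\l A_\alpha : \alpha < \lambda\r$ and $S \in I^+$, drive a play along $\sigma$ until $\l A_\alpha \r$ lies in some $N_{\alpha_\beta+1}$ while arranging the current Judge-move $Y$ to refine $S$; then $U_Y$ decides each $A_\alpha$, delivering $Y \subseteq_\mathcal{F} A_\alpha$ or $Y \subseteq_\mathcal{F} P_\kappa(\lambda) \setminus A_\alpha$. Precipitousness then follows from $\gamma$-closure with $\gamma > \omega$ in the standard way: an ill-founded generic ultrapower would yield an $\omega$-descending sequence of positive sets, which refines through $T$ to an $\omega$-chain whose lower bound obstructs ill-foundedness.

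The assumption that $P_\kappa(\lambda)$ carries no saturated ideal enters to ensure that at every node of $T$ one can split into many pairwise $\subseteq_\mathcal{F}$-incompatible refinements, preventing the construction from collapsing onto a trivial tree and guaranteeing that the Challenger has enough freedom to force the Judge's strategy to decide arbitrary Boolean combinations of sets introduced by the $N_i$'s. The principal obstacle I anticipate is twofold: first, showing rigorously that the Challenger can always interleave challenges so as to force the Judge, via its responses $Y_\beta$, to produce a node refining any prescribed positive set $X$ within a specified $N_\alpha$, which requires a careful bookkeeping between the internally approachable sequence, the game positions, and the tree structure; and second, verifying precipitousness in the two-cardinal setting, where the generic ultrapower argument must exploit that $T$ projects nicely into $V[G]$ so that $\gamma$-closure in $V$ translates into the needed lower-bound property in the generic extension.
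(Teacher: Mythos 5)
There is a genuine gap, and it is precisely at the point your proposal treats as automatic. You take $T$ to be the collection of \emph{all} Judge moves arising in \emph{any} $\sigma$-conforming play, ordered by $\supseteq_\mathcal{F}$, and $I$ to be the hopeless ideal of the original strategy $\sigma$. Density of $T$ in $I^+$ is then indeed trivial, but the two structural properties you need fail or have no proof: $(T,\supseteq_\mathcal{F})$ is not a tree (a Judge move can lie $\subseteq_\mathcal{F}$-below two $\subseteq_\mathcal{F}$-incomparable Judge moves coming from unrelated plays, so predecessor sets are not linearly ordered), and $\gamma$-closure is unjustified: a $\subseteq_\mathcal{F}$-decreasing chain of nodes of $T$ may consist of responses taken from plays that diverge at their very first move, and the winning strategy only guarantees a continuation, hence a lower bound, for a \emph{single coherent} play, not for an arbitrary $\subseteq_\mathcal{F}$-chain of responses. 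Your closure argument ("at any limit $\beta<\gamma$ reached along a play by $\sigma$\dots") silently assumes the chain is the sequence of moves of one run, which is exactly what is not available. The same coherence is what your precipitousness sketch needs and lacks: the argument must take an $\omega$-sequence of positive sets, attach to each a run of $G_2^\gamma$ ending below it, and know that these runs are end-extensions of one another so that their union is a legal play that can be extended one further step (this is where $\gamma>\omega$ enters); with your $T$ and $I$ there is no reason the runs cohere.

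The paper's proof supplies the missing machinery in two steps that your proposal does not contain. First, it builds a tree $T(\sigma)$ by recursion on levels together with an assignment $X\mapsto R_X$ of runs to nodes, maintaining as induction hypotheses that comparability in the tree coincides with $\subseteq_\mathcal{F}$, that $X\supseteq_\mathcal{F} Y$ implies $R_X\sqsubseteq R_Y$, and that distinct nodes on the same level intersect in a set of the fine ideal; the no-saturated-ideal hypothesis is used concretely here, to pick for each branch $b$ an antichain $\mathcal{A}_b\subseteq I(\sigma,P_b)^+$ of size $\lambda^+$ (where $I(\sigma,P_b)$ is the \emph{conditional} hopeless ideal, another ingredient absent from your sketch), so that a counting argument over the models $N_{\alpha+1}$, each of size $\lambda$, lets the recursion place $\lambda^+$ pairwise almost-disjoint successors above $b$. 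Second, and crucially, the final ideal is not $I(\sigma)$ but $I(\sigma')$ for a \emph{new} strategy $\sigma'$ that responds only with nodes of $T(\sigma)$ along the branch determined by the play so far; it is this change of strategy that makes $T(\sigma)$ a $\gamma$-closed dense subtree of $I(\sigma')^+$ and makes every positive set refinable by a node whose attached run can be continued. Your appeal to the saturation hypothesis ("to ensure splitting") gestures at the right place but is never used by your construction, and without the antichain splitting, the level-wise almost-disjointness, and the passage to $\sigma'$, properties (2) and (3) of the theorem are not established. The normality and $\lambda$-measuring arguments you give are essentially correct in spirit and close to the paper's, but they too are stated for the wrong ideal ($I(\sigma)$ rather than $I(\sigma')$).
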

%The proof will essentially be a generalization of the proof of Theorem $1.4$ in \cite{foremanmagidorzeman} to the two-cardinal setting.
\begin{proof}
    Given a winning strategy $\sigma$ for the Judge, we define the \emph{hopeless ideal} $I(\sigma)$ as the collection of all $X \subseteq P_\kappa(\lambda)$ such that no play of the game played according to $\sigma$ ends with a $Y \subseteq_{\mathcal{F}} X$. In other words, no ultrafilter generated in any play according to $\sigma$ assigns measure one to $X$.
    
    More precisely, for any $\ell \leq \gamma$ we recursively define a play of $G^\ell_2$ according to $\sigma$ as a sequence $P = \l (\alpha_i, Y_i) : \sigma((P \upharpoonright i)^{\smallfrown}\alpha_i) = Y_i \text{ for } i < \ell \r$. Now define the hopeless ideal:
    \[I(\sigma) = \{ X \subseteq P_\kappa(\lambda) : \text{for any play $P$ and any $\alpha$, } \sigma(P{}^\smallfrown \alpha) \not\subseteq_{\mathcal{F}} X \}\]
%{\color{orange} what do you mean by "ends"? maybe the length of the game is a limit ordinal and there is no last move? we can simply say that not set $Y_i$  along a run played according to $\sigma$ satisfy $Y_i\subseteq_{\mathcal{F}} X$}.
    
    We also define the \emph{conditional hopeless ideal} given some play $P$ of length $\ell < \gamma$:
    \[I(\sigma,P) = \{ X \subseteq P_\kappa(\lambda) : \text{for any play $Q \sqsupseteq P$ and any $\alpha$, } \sigma(Q{}^\smallfrown\alpha) \not\subseteq_{\mathcal{F}} X \}\]
    \begin{claim}
        $I(\sigma)$ is $\kappa$-complete and normal.
    \end{claim}
    \begin{claimproof}
        First we check $\kappa$-completeness.
        Suppose towards a contradiction that $I(\sigma)$ is not $\kappa$-complete, so there is some $\eta < \kappa$ and $\l A_\xi : \xi < \eta \r \subseteq I(\sigma)$ such that $A = \bigcup_{\xi < \eta} A_\alpha \notin I(\sigma)$.
        Then there must be some play $P$ of length $\ell$ such that $Y_i \subseteq_{\mathcal{F}} A$ %{\color{orange} what is $A$?} 
        for some $Y_i$ appearing in $P$.
        Now we can choose some play $Q$ such that $P \res i = Q \res i$ and $\l A_\xi : \xi < \eta \r \subseteq N_{\alpha_{i+1}}$. 
        In particular, $Y_{i+1} \subseteq_{\mathcal{F}} Y_{i} \subseteq_{\mathcal{F}} A$ and for all $\xi < \eta$, $\bigcup_{j \leq i+1} A_j \subseteq_{\mathcal{F}} A_\xi$.
        Hence $Y_{i+1} \subseteq_{\mathcal{F}} A_\xi$ by normality of $U_i$, so there is some $A_\xi \notin I(\sigma)$, a contradiction.

        The same proof with $\bigtriangledown_{\xi < \lambda} A_\xi$ in place of $\bigcup_{\xi < \eta} A_\xi$ shows that $I(\sigma)$ is normal.
        The same proof also shows that $I(\sigma,P)$ is $\kappa$-complete and normal for any $P$.
    \end{claimproof}
    
    To construct the desired ideal, we will first take an arbitrary winning strategy $\sigma$ for II in $G^\gamma_2$ and build the tree $T(\sigma)$ together with a correspondence taking sets $X$ in $T(\sigma)$ to plays of the game $R_X$. 
    We will then use $T(\sigma)$ to construct a new strategy $\sigma'$ for which $T(\sigma)$ will witness the required property $(3)$ of $I(\sigma')$.

    The construction of $T(\sigma)$ and the assignment $X\mapsto R_X$ will go by induction on the levels of $T(\sigma)$. 
    We wish to maintain the following in our induction:
    \begin{enumerate}
        \item $R_X$ has successor length and ends with $X$ as the last move played by the Judge.
        \item The tree order is  $\subseteq_{\mathcal{F}}$ restricted to the nodes of the tree.
        \item If $X \supseteq_{\mathcal{F}}Y$ are nodes in $T(\sigma)$ then $R_X \sqsubseteq R_Y$.
        \item If $X,Y$ are on the same level of the tree then $X \cap Y \in \Fine(\kappa,\lambda)^*$.
    \end{enumerate}
    %{\color{orange} we need an induction hypothesis about the tree and the assignment- for example that if $X\subseteq_{\mathcal{F}} Y$ then $R_Y\sqsubseteq R_X$, and that all the successors of a given node form an antichain in $I(\sigma, R_X)$ and check what else is needed here. Also we need $R_X$ to end with $X$ maybe?}
    
    For $\delta$ limit we will take $T(\sigma) \res \delta = \bigcup_{\alpha < \delta} (T(\sigma) \res \alpha)$ and $R_X$ will be already be defined for every $X \in T(\sigma) \res \delta$. The induction hypothesis is trivially maintained. So we shall focus on the successor stage of the construction.

    Let $\delta < \gamma$ and suppose we have constructed every level up to $\delta$.
    Let $b$ be a cofinal branch of the tree constructed thus far.
    Let $P_b=\bigcup_{X\in b}R_X$. 
    Then by $(2)$, $P_b$ is a play according to $\sigma$. %{\color{orange}there might be many such runs, we need to specifically take $R_X$ I think}.
    Since we assume there is no saturated ideal, we can choose some antichain $\mathcal{A}_b \subseteq I(\sigma,P_b)^+$ with $|\mathcal{A}| = \lambda^+$.
    For each $A \in \mathcal{A}_b$ let $Q_A$ be a play extending $P_b$ such that if $Y_A$ is the last move of the Judge then $Y_A \subseteq_{\mathcal{F}} A$. Such a play exists by the definition of $I(\sigma,P_b)$.

    Now we construct notes at the $\delta^{\text{th}}$-level, which will all have $b$ as their set of predecessors in the tree. This is again by recursion.
    Suppose we have constructed the successors $\l Y_\xi : \xi < \bar{\xi}\r$, and let $\alpha(Y_\xi)$ denote the index in $R_{Y_\xi}$ played by the Challenger to which $Y_\xi$ was the response to.
    Since each $N_\beta$ has size $\lambda$ we may choose the some $A \in \mathcal{A}_b$ which is not an element of any $N_{\alpha(Y_\xi)+1}$ for any $\xi < \bar{\xi}$.
    Now let $\alpha$ be least such that $A$ and every $Y_\xi$ is in $N_{\alpha+1}$.
    Then $\alpha$ is a legal move following $Q_A$, so we may let $Y_{\bar{\xi}}$ be the Judge's response to $Q_A {}^\frown \l \alpha\r$ given by $\sigma$.
    Notice that $Y_\xi \subseteq_{\mathcal{F}} A$ as $A$ was given by a previous move of the Judge.
    Also, set $R_{Y_{\bar{\xi}}} = Q_A {}^\frown (\alpha, Y_{\bar{\xi}})$.
    %This completes the construction of $T(\sigma)$ and the assignment $X \mapsto R_X$. %{\color{orange} explain why we maintained the induction hypothesis}
    %To show that the induction hypothesis is maintained we use the following claim:

    Now we shall show that the three conditions of the inductive hypothesis are maintained.
    The first is easily seen to be satisfied, so towards verifying the second condition, et $X$ be any node in $T(\sigma)$ and $Y_\xi$ any node on the newly constructed $\delta^{\text{th}}$ level of the tree. If $X$ is a predecessor of $Y_\xi$ is the tree order, then $X$ lie on the branch $b$, and therefore
    $Y_\xi\subseteq_{\mathcal{F}} X$ as $Y_\xi$ was given by a move of the Judge to a run of the game where $X$ was played. In the other direction, if $X$ is not in $b$, then either $X=Y_{\bar{\xi}}$ in which case
    $Y_\xi\not\subseteq_{\mathcal{F}} X$, or the predecessors of $X$ and $Y_\xi$ in the tree are different. Applying $(4)$ of the induction hypothesis to the level where $X$ and $Y_\xi$ split, together with $(2)$, we see that $Y_\xi\not\subseteq_\mathcal{F} X$. 
    
     The third condition now follows from the second, since if let $X\subseteq Y_\xi$, then $X$ must be on the branch $b$ leading to $Y_\xi$. But then, by definition, $$R_X\sqsubseteq b\sqsubseteq Q_A\sqsubseteq R_{Y_\xi},$$ where $A\in\mathcal{A}_b$ was the set used in the construction of $Y_\xi$.
     
    Finally, we verify the fourth condition.
    $Y,Z$ lie above distinct branches then this follows from the inductive hypothesis. So suppose now that $\xi < \zeta$ and $Y_\xi,Y_\zeta$ are above the same branch $b$.
    Let $A_\xi,A_\zeta$ be the corresponding members of $\mathcal{A}_b$, respectively.
    Then $Y_\xi  \subseteq_\mathcal{F} A_\xi$ and similar for $\zeta$.
    But $A_\xi \cap A_\zeta \in \Fine(\kappa,\lambda)^*$ since $\mathcal{A}_b$ is an antichain, so $Y_\xi \cap Y_\zeta \in \Fine(\kappa,\lambda)^*$.
    Hence the induction hypothesis is maintained and so the construction of $T(\sigma)$ is completed.

    Now given $T(\sigma)$ we shall recursively define a new strategy $\sigma'$.
    Suppose $R = \l (\alpha_i, Y_j): i < j \r$ is a play of the game according to $\sigma' \res j$, and assume further that every move given by $\sigma'$ is in $T(\sigma)$.
    Let $b_R$ be the corresponding branch through $T(\sigma)$.
    For each legal move $\beta$ for the Challenger, set
    \begin{align*}
        \text{$\sigma'(R^\frown \l\beta\r)$} &= \text{ the unique immediate successor $Y$ of $b_R$ in} \\
        & \text{ \hspace{5mm}$T(\sigma)$ such that $\alpha(Y) >\beta$ is minimal.}
    \end{align*}
    Since $\sigma$ is a winning strategy and $T(\sigma)$ has height $\gamma$ we see that $\sigma'$ is also a winning strategy.
    Also since $T(\sigma)$ has height $\gamma$ and no terminal nodes we see that $T(\sigma)$ is a $\gamma$-closed subset of $I(\sigma')^+$.

    %Before we show that $I(\sigma')$ has the desired properties we remark that in the construction of $T(\sigma)$ we may take any $A \in I(\sigma)^+$ and put it on the first level of $T(\sigma)$.{\color{orange} it doesn't look like the next sentence follows from the previous one. }
    %Hence, we may assume that if $A \in T(\sigma)$ then $A \in I(\sigma')^+$.
    %Furthermore, for any partial play $R$ according to $\sigma'$, if $Y$ is the last move played by II then $I(\sigma',R) = I(\sigma') \upharpoonright Y$.{\color{orange} I am not sure if this is our notations from previous sections. Also, this equality should be further explained.}

    We have seen already that $I(\sigma')$ is normal, so we show it is precipitous.
    %{\color{orange} phrase the game before, and attribute to \cite[Lemma 22.21]{jech}} in this game, the players alternate playing sets $S_n \in I(\sigma')^+$ such that $S_{n+1} \subseteq S_n$.
    Consider the precipitousness game on an ideal $J$, which is played as follows:
    %{\color{orange} seems that some details of the game are missing here}
    players I and II alternate picking sets $S_n \in J^+$ such that $S_{n+1} \subseteq S_n$ for all $n \in \omega$.
    The game is played for $\omega$ steps.
    I wins iff $\bigcap_{n \in \omega} S_n = \emptyset$.
    A proof that $I$ is precipitous iff I does not have a winning strategy can be found in \cite[Lemma 22.21]{jech}.
%{\color{orange} isn't precipitous equivalent to player I not having a winning strategy? also, can you give a precise reference in Jech? Also, there are two games here and when we say not Player I it is unclear in which game. "Judge" and "Challenger" for the Welch game will resolve this problem. }
    \begin{claim}
        Player I does not have a winning strategy in the precipitousness game $G(I(\sigma'))$, so $I(\sigma')$ is precipitous.
    \end{claim}
    \begin{claimproof}
        Assume that $\tau$ is a winning strategy for I in the precipitousness game $G(I(\sigma'))$.
        We will show that there is a play of the game according to $\tau$ where II wins.
        To do this, we shall use an auxiliary play of $G^\gamma_2$.
        
        First let $S_0$ be the first move for I given by $\tau$.
        Let $\alpha$ be least such that $S_0 \in N_{\alpha}$. Since $S_0\in I(\sigma')^+$, there is a run of the game $R_0$ ending with $Y_0$ such that $Y_0\subseteq_{\mathcal{F}}S_0$. Note that by the definition of $\sigma'$, $R_0=R_{Y_0}$. Back in
         the precipitousness game, we let II play $S_1=Y_0\cap S_0$. In general, suppose that $S_0\supseteq S_1\supseteq...\supseteq S_{2n-1}$ was played according to $\tau$ and let $S_{2n}\in I(\sigma')^+$ be the move of Player $I$ according to $\tau$. Again, note that there is a Run $R_n$ with last move $Y_n$ such that $Y_n\subseteq_{\mathcal{F}} S_{2n}$. Again, $R_n=R_{Y_n}$ and note that $Y_n\subseteq_{\mathcal{F}} S_{2n}\subseteq_{\mathcal{F}}Y_{n-1}$. By the properties of the tree, it follows that $R_{n-1}=R_{Y_{n-1}}\sqsubseteq R_{Y_n}=R_n$. Let $S_{2n+1}=Y_n\cap S_{2n}$ by the move of $II$. 

%        Now we should continue in a similar fashion, $I$ plays some $S_i$, we pass to $G^\gamma_2$ and look for some $\beta$ with $S_i \in N_\beta$ and take the response $Y_i$ given by $\sigma'$.
    %    Then back in the precipitous game II will play any $S_{i+1} \subseteq_\mathcal{F} Y_i$ using density and closure of the tree.

        Then at the end of the game, $\bigcap_{i<\omega}S_i=\emptyset$, since $\tau$ is a winning strategy. However, the run $R=\bigcup_{n<\omega} R_n$ is a run in the game $G^\xi_2$ according to the winning strategy $\sigma$. We can pick $\alpha$ large enough so it is a legal move and $\l S_n\mid n<\omega\r\i N_{\alpha+1}$. Then $\sigma(R^\smallfrown\alpha)=Y$ and $Y$ determines a normal ultrafilter $U$ on $N_{\alpha+1}$ which includes all the $Y_n$'s and therefore all the $S_n$'s. Also $U$ is $\kappa$-complete which implies that $\bigcap_{n<\omega}S_n\in U$, producing a contradiction.
    \end{claimproof}
    
    We remark that all we have really used in the above proof is that we have a winning strategy in the game of length $\omega+1$-closed.
    Now we just have one claim left to show.
    \begin{claim}
        $I(\sigma')$ is $\lambda$-measuring.
    \end{claim}
    \begin{claimproof}
        Fix $A \in T(\sigma)$ and a sequence $\l A_\alpha : \alpha < \lambda \r$.
        Let $\xi < \lambda^+$ be large enough that $\l A_\alpha : \alpha < \lambda \r \subseteq N_\xi$ 
        and let $A^* \subseteq_{\mathcal{F}} A_\alpha$ for all $\alpha$ with $A^* \in T(\sigma)$. 
        Then $A^*$ is a valid move for the Judge in response to the Challenger playing $\xi$.
        Since $I(\sigma')$ extends the fine ideal, $A^* \subseteq_{\mathcal{F}} A$ and $A^*$ measures $\l A_\alpha : \alpha < \lambda \r$.
    \end{claimproof}
    
    Hence $I(\sigma')$ is as desired.
\end{proof}
\begin{remark}
    Similar to Theorem \cite[Thm. 8.14]{tomvika}, the existence of a normal $\lambda$-measuring ideal
$I$ on $P_\kappa(\lambda)$ with a $\delta$-closed dense tree $D$. Implies that the Judge has a winning strategy in the game $G^\gamma_2$.
\end{remark}
%{\color{orange} we can also show that the existence of an ideal as above implies the existence of a winning strategy.}
\subsection*{Acknowledgment}
The authors would like to thank Gabriel Goldberg for proposing many ideas and directions that influenced this paper enormously. They would also like to thank James Cummings and Moti Gitik for insightful discussions on the matter. 
\printbibliography
\end{document}